\author{Ofir Gorodetsky}
\title{Sums of two squares are strongly biased towards quadratic residues}
\date{}
\newtheorem*{thm*}{Theorem}
\newtheorem{thm}{Theorem}[section]
\newtheorem{lem}[thm]{Lemma}  
\newtheorem{proposition}[thm]{Proposition}
\newtheorem{cor}[thm]{Corollary}
\theoremstyle{definition}
\newtheorem{remark}[thm]{Remark}
\newcommand{\CC}{\mathbb{C}}
\newcommand{\sosind}{\mathbf{1}_{S}}
\newcommand{\quadratic}{\chi_{-4}}
\numberwithin{equation}{section}
\newcommand{\Addresses}{{
		\bigskip
		\footnotesize
		
		\textsc{Mathematical Institute, University of Oxford, Oxford, OX2 6GG, UK}\par\nopagebreak
		\textit{E-mail address:} \texttt{ofir.goro@gmail.com}
}}
\begin{document}
	\maketitle
	\begin{abstract}
	Chebyshev famously observed empirically that more often than not, there are more primes of the form $3 \bmod 4$ up to $x$ than of the form $1 \bmod 4$. This was confirmed theoretically much later by Rubinstein and Sarnak in a logarithmic density sense. Our understanding of this is conditional on the generalized Riemann hypothesis as well as on the linear independence of the zeros of $L$-functions.
	
	We investigate similar questions for sums of two squares in arithmetic progressions. We find a significantly stronger bias than in primes, which happens for almost all integers in a \emph{natural density} sense. Because the bias is more pronounced, we do not need to assume linear independence of zeros, only a Chowla-type conjecture on nonvanishing of $L$-functions at $1/2$.	
	To illustrate, we have under GRH that the number of sums of two squares up to $x$ that are $1 \bmod 3$ is greater than those that are $2 \bmod 3$ 100\% of the time in natural density sense.
	\end{abstract}
	\section{Introduction}
	\subsection{Review of sums of two squares in arithmetic progressions}
	Let $S$ be the set of positive integers expressible as a sum of two perfect squares. We denote by $\sosind$ the indicator function of $S$. It is multiplicative and for a prime $p$ we have \begin{equation}\label{eq:characterization}
		\sosind(p^k)=0 \text{ if and only if }p\equiv 3 \bmod 4 \text{ and }2 \nmid k.
	\end{equation}
	Landau \cite{Landau1908} proved that \[ \# (S \cap [1,x]) \sim \frac{Kx}{\sqrt{\log x}}\]
	where $K=\prod_{p\equiv 3 \bmod 4}(1-p^{-2})^{-1/2}/\sqrt{2}\approx 0.764$ is the Landau-Ramanujan constant. See \cite[Lec.~IV]{Hardy1940} for Hardy's account of Ramanujan's unpublished work on this problem.
	Landau's method yields an asymptotic expansion in descending powers of $\log x$, which gives an error term $O_k(x/(\log x)^{k+1/2})$ for each $k\ge 1$.\footnote{A more complicated main term, leading to a significantly better error term (conjecturally $O_{\varepsilon}(x^{1/2+\varepsilon})$, but no better than that), is described e.g. in \cite[App.~B]{Gorodetsky2021} (compare \cite{Ramachandra1976}, \cite[p.~187]{Montgomery2007}, \cite[Thm.~2.1]{David2021}).}
	Prachar \cite{Prachar1953} proved  that sums of two squares are equidistributed in arithmetic progressions, in the following sense. If $(a,q)=1$ then
	\[ S(x;q,a) :=\#\{ n \in S: n \le x, \, n \equiv a \bmod q \} \sim  \frac{(4,q)}{(2,q)q}\prod_{\substack{p\mid q\\p \equiv 3 \bmod 4}} \left(1+\frac{1}{p}\right)\frac{Kx}{\sqrt{\log x}}\]
	as $x \to \infty$ as long as $a \equiv 1 \bmod (4,q)$; see Iwaniec's work on the half-dimensional sieve for results allowing $q$ to vary with $x$ \cite{Iwaniec1976}. The condition $a \equiv 1 \bmod (4,q)$ is necessary: otherwise $(a,q)=1$ and $a \not\equiv 1 \bmod (4,q)$ imply $a \equiv 3 \bmod 4$. However, $S$ is disjoint from $3 \bmod 4$.
	\subsection{Main theorem and corollary}
	Here we consider a Chebyshev's bias phenomenon for $S$. We ask, what can be said about the size of the set
	\begin{equation}\label{eq:bias}
		\{ n \le x: S(n;q,a)>S(n;q,b)\}
	\end{equation}
	for distinct $a,b \bmod q$ with $a\equiv b \equiv 1 \bmod (4,q)$ and $(a,q)=(b,q)=1$? These conditions guarantee that $S(n;q,a)\sim S(n;q,b) \to \infty$ as $n \to \infty$, so it is sensible to study \eqref{eq:bias}. We let $\quadratic$ be the unique nonprincipal Dirichlet character modulo $4$. Motivated by numerical evidence (based on $n \le 10^8$) showing $S(n;3,1)-S(n;3,2)$ and $S(n;5,1)-S(n;5,3)$ are positive much more frequently than not, we were led to discover and prove the following.
	\begin{thm}\label{thm:bias}
		Fix a positive integer $q $. Assume that the Generalized Riemann hypothesis (GRH) holds for the Dirichlet $L$-functions $L(s,\chi)$ and $L(s,\chi \quadratic)$ for all Dirichlet character $\chi$ modulo $q$.		
		Then, whenever $a,b$ satisfy $a\equiv b \equiv 1 \bmod (4,q)$, $(a,q)=(b,q)=1$ and \begin{equation}\label{eq:lincond} C_{q,a,b}:=\sum_{\substack{\chi \bmod q\\ \chi^2 = \chi_0}} (\chi(a)-\chi(b)) \left(1-\frac{\chi(2)}{\sqrt{2}}\right)^{-1/2}\sqrt{L\left(\frac{1}{2},\chi\right)L\left(\frac{1}{2},\chi \quadratic\right)}>0
		\end{equation}
		we have, as $x \to \infty$,
		\[ \# \{n \le x: S(n;q,a) > S(n;q,b)\} = x(1+o(1)). \]
	\end{thm}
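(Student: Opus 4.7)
I will first apply Dirichlet orthogonality to write
\[
S(n;q,a) - S(n;q,b) \;=\; \frac{1}{\phi(q)}\sum_{\substack{\chi \bmod q \\ \chi \neq \chi_0}} \bigl(\overline{\chi(a)} - \overline{\chi(b)}\bigr)\, T_\chi(n), \qquad T_\chi(n) := \sum_{m \le n}\sosind(m)\chi(m),
\]
the principal character contributing $0$ because $(a,q) = (b,q) = 1$. A direct comparison of local factors using \eqref{eq:characterization} then yields the generating-function identity
\[
\sum_{n \ge 1}\sosind(n)\chi(n)\, n^{-s} \;=\; \sqrt{L(s,\chi)\, L(s,\chi\quadratic)}\cdot G_\chi(s),
\]
where $G_\chi(s) = (1 - \chi(2) 2^{-s})^{-1/2}\prod_{p \equiv 3 \bmod 4,\, p \nmid q}(1 - \chi(p)^2 p^{-2s})^{-1/2}$ converges absolutely for $\Re s > 1/2$; GRH ensures that the square root admits a single-valued holomorphic branch on that half-plane.

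\textbf{Step 2: Singular behaviour at $s = 1/2$ and Selberg--Delange main term.} The key observation is a dichotomy in the behaviour of $G_\chi$ at $s = 1/2$. For non-real $\chi$ the character $\chi^2$ is non-principal, so $\sum_{p \equiv 3 \bmod 4,\, p \nmid q} \chi(p)^2 p^{-2s}$ stays bounded near $s = 1/2$, and (under GRH with the nonvanishing at $1/2$ implicit in the assumption $C_{q,a,b} > 0$) the entire Dirichlet series of $\sosind\chi$ is analytic at $s = 1/2$. For real $\chi^2 = \chi_0$, on the contrary, $\chi(p)^2 = 1$ for $p \nmid q$ and $\sum_{p \equiv 3 \bmod 4} p^{-2s}$ diverges like $-\tfrac12\log(s - \tfrac12)$, which produces a $(s-1/2)^{-1/4}$ branch singularity in $G_\chi$. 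Thus for each real non-principal $\chi$ one can write
\[
\sum_n \sosind(n)\chi(n)\, n^{-s} \;=\; (s - 1/2)^{-1/4}\, H_\chi(s),
\]
with $H_\chi$ analytic near $s = 1/2$ and
\[
H_\chi(1/2) \;=\; \kappa \cdot \Bigl(1 - \tfrac{\chi(2)}{\sqrt 2}\Bigr)^{-1/2}\sqrt{L\bigl(\tfrac12,\chi\bigr)\, L\bigl(\tfrac12,\chi\quadratic\bigr)}
\]
for an explicit constant $\kappa = \kappa(q) > 0$ coming from the regularised tail of the Euler product. A Hankel-contour / Selberg--Delange extraction then delivers
\[
T_\chi(n) \;=\; \frac{2 H_\chi(1/2)}{\Gamma(1/4)} \cdot \frac{n^{1/2}}{(\log n)^{3/4}}\bigl(1 + o(1)\bigr) \qquad (\chi^2 = \chi_0,\ \chi \neq \chi_0),
\]
whose coefficient matches precisely the $\chi$-summand in the definition of $C_{q,a,b}$.

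\textbf{Step 3: Non-real characters and conclusion.} Assembling the contributions of all characters produces
\[
S(n;q,a) - S(n;q,b) \;=\; \frac{2\kappa\, C_{q,a,b}}{\phi(q)\,\Gamma(1/4)} \cdot \frac{n^{1/2}}{(\log n)^{3/4}} + E(n),
\]
where $E(n)$ collects the contributions of the non-real characters. Since $C_{q,a,b} > 0$ by hypothesis, the theorem will follow from $E(n) = o(n^{1/2}/(\log n)^{3/4})$ outside a set of natural density zero. I plan to obtain this through a mean-square estimate
\[
\int_1^X |E(n)|^2 \, dn \;\ll\; \frac{X^{2}}{(\log X)^{3/2 + \delta}}
\]
for some $\delta > 0$, proved by Perron's formula on the line $\Re s = 1/2$ combined with a GRH-controlled second-moment bound for $L(\tfrac12 + it, \chi)\, L(\tfrac12 + it, \chi\quadratic)$ on the critical line; Chebyshev's inequality then converts the mean-square bound into the claimed natural density one. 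The principal obstacle is precisely this mean-square control: the target main term $n^{1/2}/(\log n)^{3/4}$ is so small that even a single power of $\log n$ lost in the second-moment estimate would obliterate the argument, so the critical-line analysis must simultaneously exploit the analyticity of $\sum \sosind(n)\chi(n)\, n^{-s}$ at $s = 1/2$ for non-real $\chi$ and GRH-quality bounds for $L(s,\chi)L(s,\chi\quadratic)$ on $\Re s = 1/2$—this is where the GRH hypothesis is essential, while the earlier steps are essentially classical.
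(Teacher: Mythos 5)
Your skeleton (orthogonality, a square-root factorization of the generating function, a $(s-\tfrac12)^{-1/4}$ singularity for real $\chi$ producing a main term $\asymp\sqrt{x}(\log x)^{-3/4}$ with the right constant, and Chebyshev's inequality at the end) matches the paper, but the two analytic inputs that make the argument work are missing, and the mechanism you propose in their place would fail. First, your factorization $F(s,\chi)=\sqrt{L(s,\chi)L(s,\chi\quadratic)}\,G_\chi(s)$ with $G_\chi$ given by an Euler product convergent only for $\Re s>\tfrac12$ is the Landau-type identity; it does not let you cross the critical line. The paper's Lemma 3.5 extracts the further factor $\sqrt[4]{L(2s,\chi^2)/L(2s,\chi^2\quadratic)}$ so that the remaining factor continues to $\Re s>\tfrac14$, and this continuation is essential: the contour is pushed to $\Re s=\tfrac12-c$ and only returns to the critical line via truncated Hankel loops around the zeros of $L(s,\chi)L(s,\chi\quadratic)$, where $|F(s,\chi)|\ll|s-\rho|^{1/2}$. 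The $(\log x)^{-3/2}$ saving per zero comes precisely from $\int_{-c}^{0}\sqrt{|u|}\,x^{u}\,du\asymp(\log x)^{-3/2}$, i.e.\ from the leftward excursion combined with the vanishing of $F$ at $\rho$; the stretches of contour between zeros sit at $\Re s=\tfrac12-c$ and are power-saving small.

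By contrast, "Perron's formula on the line $\Re s=\tfrac12$ combined with a second-moment bound for $L(\tfrac12+it,\chi)L(\tfrac12+it,\chi\quadratic)$" cannot produce your target $\int_1^X|E(n)|^2\,dn\ll X^2(\log X)^{-3/2-\delta}$. Expanding the square and integrating in $x$ first gives at best the off-diagonal decay $(1+|t_1-t_2|)^{-1}$, and the diagonal contribution is $\asymp X\int|F(\tfrac12+it,\chi)|^2(1+t^2)^{-1}\,dt$; since $\int_T^{2T}|L(\tfrac12+it,\chi)L(\tfrac12+it,\chi\quadratic)|\,dt\asymp T(\log T)^{O(1)}$ with no saving, this is $\asymp X$ per unit mean square, i.e.\ $|E(n)|\asymp\sqrt{n}$ on average — far too large. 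There is no second-moment estimate on the critical line that sees the needed cancellation; the saving must come from the geometry of the contour. Two further points: (i) for non-real $\chi$ the series is \emph{not} analytic at $\Re s=\tfrac12$ — $\sqrt{L(s,\chi)L(s,\chi\quadratic)}$ has branch points at every critical zero, and the hypothesis $C_{q,a,b}>0$ says nothing about nonvanishing of $L(\tfrac12,\chi)$ for such $\chi$ (also, if $\chi$ has order $4$ then $\chi^2\quadratic$ may be principal, forcing an extra $(s-\tfrac12)^{1/4}$ factor); (ii) your pointwise asymptotic $T_\chi(n)=(1+o(1))\cdot(\text{main term})$ for real $\chi$ is not provable — the zeros $\rho\neq\tfrac12$ of $L(s,\chi)L(s,\chi\quadratic)$ contribute $\asymp\sqrt{n}(\log n)^{-3/2}$ only in mean square, so these contributions must be placed into $E(n)$ alongside the non-real characters, not absorbed into a pointwise $o(1)$.
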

Here (and later) $\chi_0$ is the principal character modulo $q$. Observe that $\chi(a)=\chi(b)=1$ for $\chi=\chi_0$ as well as for $\chi=\chi_0\quadratic$ (if $4 \mid q$), so these two characters may be omitted from the sum \eqref{eq:lincond}. As we explain in Remark~\ref{rem:nonneg} below, $L(1/2,\chi)L(1/2,\chi\quadratic)$ is nonnegative under the conditions of Theorem~\ref{thm:bias}; the squareroot we take in \eqref{eq:lincond} is the nonnegative one.
\begin{remark}\label{rem:nonneg}
Under GRH for (nonprincipal) real $\chi$, we have $L(1/2,\chi)\ge 0$ since otherwise there is a zero of  $L(s,\chi)$ in $(1/2,1)$ by the intermediate value theorem. Similarly, $L(1/2,\chi \quadratic) \ge 0$ if $\chi \neq \chi_0 \quadratic$. 
Conrey and Soundararajan proved, unconditionally, that for a positive proportion of quadratic characters $\chi$ we have $L(s,\chi)> 0$ on $(1/2,1)$ \cite{Conrey2002}, implying $L(1/2,\chi) \ge 0$. 
Chowla's Conjecture \cite{Chowla1965} states that $L(1/2,\chi) \neq 0$ for all real Dirichlet characters. It was studied extensively, see e.g. Soundararajan \cite{Sound2000}.
\end{remark}
	If $\chi(a)=1$ for all real characters modulo $q$ then $a$ is a quadratic residue modulo $q$, and vice versa.
	Let us specialize $a$ to be a quadratic residue modulo $q$ and $b$ to be a nonquadratic residue. We observe that (on GRH) 
	$(\chi(a)-\chi(b))\sqrt{L(1/2,\chi)L(1/2,\chi \quadratic)}$ is nonnegative for each real $\chi \bmod q$ that is not $\chi_0$ or $\chi_0 \quadratic$. Hence, under GRH and our assumptions on $a$ and $b$, a necessary and sufficient condition for \eqref{eq:lincond} to hold is that $L(1/2,\chi)L(1/2,\chi\quadratic) \neq 0$ for some real $\chi \bmod q$ with $\chi(b)=-1$. One way to guarantee this is to assume Chowla's Conjecture. We state this as the following corollary.
	\begin{cor}\label{cor:quad}
		Suppose that $GRH$ holds for $\chi$ and $\chi \quadratic$ as $\chi$ varies over all Dirichlet characters modulo $q$. 
		Let $a$ and $b$ be quadratic and nonquadratic residues modulo $q$, respectively, with $(a,q)=(b,q)=1$ and $a \equiv b \equiv 1 \bmod (4,q)$.	If $L(1/2,\chi)L(1/2,\chi\quadratic) \neq 0$ for some $\chi \bmod q$ with $\chi(b)=-1$ then  \begin{equation}\label{eq:sn ineq}S(n;q,a)>S(n;q,b)
		\end{equation}
	holds for a density-1 set of integers. In particular, if Chowla's Conjecture holds then \eqref{eq:sn ineq} holds for a density-1 set of integers.
	\end{cor}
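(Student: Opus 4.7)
The plan is to derive the corollary directly from Theorem~\ref{thm:bias} by verifying that its hypotheses force the constant $C_{q,a,b}$ in \eqref{eq:lincond} to be strictly positive. All the analytic work is done in Theorem~\ref{thm:bias}, so what remains is a termwise positivity check for the sum over real characters.

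First, I would check termwise non-negativity. Fix a real character $\chi\bmod q$. Since $a$ is a quadratic residue modulo $q$, $\chi(a)=1$; since $\chi$ is real and $(b,q)=1$, $\chi(b)\in\{-1,+1\}$. Hence $\chi(a)-\chi(b)\in\{0,2\}$, and the weight $(1-\chi(2)/\sqrt{2})^{-1/2}$ is a positive real since $\chi(2)\in\{-1,0,1\}$. By Remark~\ref{rem:nonneg}, GRH yields $L(1/2,\chi)\ge 0$ when $\chi\ne\chi_0$ and $L(1/2,\chi\quadratic)\ge 0$ when $\chi\ne\chi_0\quadratic$. The two possibly bad cases $\chi=\chi_0$ and (when $4\mid q$) $\chi=\chi_0\quadratic$ both satisfy $\chi(a)=\chi(b)=1$ by virtue of $a\equiv b\equiv 1\bmod(4,q)$, so their terms vanish regardless. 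Every summand in $C_{q,a,b}$ is therefore non-negative.

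Next, I would produce a strictly positive summand. A summand is positive exactly when $\chi(b)=-1$ and $L(1/2,\chi)L(1/2,\chi\quadratic)\ne 0$. Since $b$ is a nonquadratic residue and the intersection of kernels of the real characters modulo $q$ equals the subgroup of squares in $(\mathbb{Z}/q\mathbb{Z})^{*}$, some real $\chi\bmod q$ has $\chi(b)=-1$; any such $\chi$ is automatically distinct from $\chi_0$ and, when $4\mid q$, from $\chi_0\quadratic$ (since $\chi_0\quadratic(b)=\quadratic(b)=1$ because $b\equiv 1\bmod 4$ in that case). The hypothesis furnishes one such $\chi$ with both L-values non-vanishing, so $C_{q,a,b}>0$ and Theorem~\ref{thm:bias} delivers the density-$1$ conclusion. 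For the ``in particular'' clause, Chowla's Conjecture asserts $L(1/2,\psi)\ne 0$ for every non-principal real $\psi$, and we apply it both to $\chi$ and to $\chi\quadratic$; the latter is non-principal since otherwise $\chi=\chi_0\quadratic$, excluded above. I expect no serious obstacle, as the corollary is essentially a verification layer on top of Theorem~\ref{thm:bias}, with the only bookkeeping being the careful handling of the exceptional characters $\chi_0$ and $\chi_0\quadratic$.
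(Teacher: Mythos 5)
Your proof is correct and takes essentially the same route as the paper, which establishes the corollary in the paragraph preceding its statement: termwise non-negativity of the summands of $C_{q,a,b}$ for real $\chi$ (using Remark~\ref{rem:nonneg}), plus the existence of one strictly positive term coming from a real $\chi$ with $\chi(b)=-1$ and non-vanishing L-values, so that Theorem~\ref{thm:bias} applies. Your additional bookkeeping for the exceptional characters $\chi_0$ and $\chi_0\quadratic$ matches the paper's observation that these two characters drop out of the sum.
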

It would be interesting to try and establish the positivity of \eqref{eq:lincond}, possibly in a statistical sense, without hypotheses like Chowla's Conjecture.
 
	For a given Dirichlet character $\chi$, one can computationally verify that $L(1/2,\chi)$ is nonzero, and in fact compute all zeros of $L(s,\chi)$ up to a certain height, see Rumely \cite{Rumley1993} which in particular shows $L(1/2,\chi) \neq 0$ for characters of conductor $\le 72$. Nowadays computing $L(1/2,\chi)$ is a one-line command in \emph{Mathematica}, and so the verification of \eqref{eq:lincond} is practical for fixed $q$, $a$ and $b$. 
	
	We expect the expression in \eqref{eq:lincond} to be nonzero as long as $\chi(a)\neq\chi(b)$ for some real character, or equivalently, if $a/b$ is nonquadratic residue modulo $q$. It is instructive to consider the following two possibilities for $a$ and $b$ separately:
	\begin{itemize}
		\item Suppose $a$, $b$ and $a/b$ are all nonquadratic residues, a situation that could occur only if the modulus $q$ is composite. Although the expression $C_{q,a,b}$ should be nonzero and give rise to a bias, it seems the sign is very difficult to predict. Interestingly, for primes, as we shall review below, there is no bias in this case.
		\item If exactly one of $a$ and $b$ is a quadratic residue then Corollary~\ref{cor:quad} tells us the direction of the bias (if it exists) is towards the quadratic residue. A sufficient condition for the bias to exist is Chowla's Conjecture.		
	\end{itemize}

	\subsection{Comparison with primes}
	Chebyshev's bias was originally studied in the case of primes, that is, replacing $S$ by the set of primes. Letting $\pi(x;q,a)$ be the numbers of primes up to $x$ lying in the arithmetic progression $a \bmod q$, Chebyshev famously observed in 1853 that $\pi(x;4,3)>\pi(x;4,1)$ happens more often than not \cite[pp.~697--698]{Tchebychef}.
	
	Littlewood \cite{Littlewood1914} showed that $\pi(x;4,3)-\pi(x;4,1)$ changes sign infinitely often. Knapowski and Tur\'{a}n \cite{Knapowski1962} conjectured that $\pi(x;4,3)>\pi(x;4,1)$ holds 100\% of the time in natural density sense. This was refuted, under GRH, by Kaczorowski \cite{Kaczorowski1992,Kaczorowski1995}, who showed (conditionally) that $\{ x:\pi(x;4,3)>\pi(x;4,1) \}$ does not have a natural density, and that its upper natural density is strictly less than 1.
	
	 Rubinstein and Sarnak \cite{Rubinstein1994} studied the set $\{ x : \pi(x;q,a)>\pi(x;q,b)\}$ where $a \not\equiv b \bmod q$ and $(a,q)=(b,q)=1$. 
	 They showed, under GRH and the Grand Simplicity Hypothesis (GSH) that this set has logarithmic density strictly between $0$ and $1$. Additionally, the logarithmic density is greater than $1/2$ if and only if $a$ is a nonquadratic residue and $b$ is a quadratic residue. In particular,  no bias is present at all if both $a$ and $b$ are nonquadratic residues, as opposed to the sums of two squares analogue.
	
	GSH asserts that the multiset of $\gamma \ge 0$ such that $L(1/2+i\gamma,\chi)=0$, for $\chi$ running over primitive Dirichlet characters, is linearly independent over $\mathbb{Q}$; here $\gamma$ are counted with multiplicity. It implies Chowla's Conjecture (since $0$ is linearly dependent) and that zeros of $L(s,\chi)$ are simple. As opposed to Chowla, it is very hard to gather evidence for GSH, even for individual $L$-functions. However, see Best and Trudgian for such evidence in the case of $\zeta$ \cite{Best2015}. In the literature, this hypothesis also goes under the name Linear independence (LI).
	\subsection{Strong biases}
	Chebyshev's bias was studied in various settings and for various sets, e.g. \cite{Ng2000,Moree2004,Fiorilli2014,Fiorilli20142,Devin2020,Bailleul2021,Devin20212}, in particular for products of a fixed number of primes \cite{Dummit2016,Ford2010,Meng2018,Devin2021}. 
	
	As far as we are aware, Theorem~\ref{thm:bias} is the first instance where a set of integers of arithmetic interest -- in this case sums of two squares -- is shown to exhibit a complete Chebyshev's bias, that is, a bias that holds for a natural density-$1$ set of integers:
	\[ \# \{ n \le x: M(n;q,a)>M(n;q,b)\} = x(1+o(1))\]
	where $M(n;q,a)$ counts elements up to $n$ in a set $M\subseteq \mathbb{N}$ that are congruent to $a$ modulo $q$.
	A key issue here is the natural density: Meng \cite{Meng2020} has a related work about a bias that holds for a logarithmic density-$1$ set of integers (see \S\ref{sec:martin conj}). Recently, Devin proposed a conjecture \cite[Conj.~1.2]{Devin20212} on a bias in logarithmic density $1$.
See Fiorilli \cite{Fiorilli20142,Fiorilli2014} for biases, in logarithmic density, that come arbitrarily close to $1$, and Fiorilli and Jouve \cite{fiorilli2020unconditional} for complete biases in `Frobenius sets' of primes (that generalize arithmetic progressions).
	
	We also mention a very strong bias was proved by Dummit, Granville and Kisilevsky \cite{Dummit2016} who take Chebyshev's observation to a different direction. They show that substantially more than a quarter of the odd integers of the form $pq$ up to $x$, with $p$, $q$ both prime, satisfy $p \equiv q \equiv 3 \bmod 4$.
	
	In the function field setting, complete biases were established in various special situations (e.g. for low degree moduli): \cite[Cor.~4.4]{Cha2008}, \cite[Thm.~1.5]{Cha2016}, \cite[Ex.~6]{Devin2021}. See also the work of Porritt \cite{Porritt2020} on which we elaborate in \S\ref{sec:generating}.
	\section{Origin of the bias, computational evidence and a variation}
\subsection{Review of the original bias}
Fix a modulus $q$. All the constants below might depend on $q$. We give an informal explanation for the origin of the bias. It is instructive to start with the case of primes. By orthogonality of characters,
\[ \pi(x;q,a)-\pi(x;q,b) = \frac{1}{\phi(q)} \sum_{\chi_0 \neq \chi \bmod q} \overline{(\chi(a)-\chi(b))}\sum_{n \le x} \mathbf{1}_{n \text{ is a prime}} \chi(n).\]
The generating function of primes was studied by Riemann \cite{Riemann1859}, who showed that
\[ \sum_{n \ge 1} \frac{ \mathbf{1}_{n \text{ is a prime}}}{n^s} = \sum_{k \ge 1} \frac{\mu(k)}{k} \log \zeta(sk)\]
for $\Re s > 1$. Here $\mu$ is the M\"obius function, $\zeta$ is the Riemann zeta function and the logarithm is chosen so that $\log \zeta(s)$ is real if $s$ is real and greater than $1$.
More generally, given a Dirichlet character $\chi$  we have
\[ \sum_{n \ge 1} \frac{\mathbf{1}_{n \text{ is a prime}}\chi(n) }{n^s} = \sum_{k \ge 1} \frac{\mu(k)}{k} \log L(sk,\chi^k).\]
We may also write this $L$-function identity in terms of arithmetic functions:
\begin{align}\label{eq:primes} \mathbf{1}_{n \text{ is a prime}}\chi(n)  &= \frac{\Lambda(n)\chi(n)}{\log n} +\sum_{k \ge 2} \frac{\mu(k)\Lambda(n^{1/k})\chi(n)}{\log n} \mathbf{1}_{n \text{ is a kth power}} \\
	&=\frac{\Lambda(n)\chi(n)}{\log n} - \frac{1}{2} \mathbf{1}_{n =p^2, \, p\text{ prime}} \chi^2(p) + \alpha(n)\chi(n),
\end{align}
where $\Lambda$ is the von Mangoldt function and  $\alpha$ is supported only on cubes and higher powers and its sum is negligible for all practical purposes.
Under GRH we can show that (see \cite[Lem.~2.1]{Rubinstein1994}) \[\sum_{n \le x} \frac{\chi(n)\Lambda(n)}{\log n} = \frac{\sum_{n \le x}\chi(n)\Lambda(n)}{\log x} + O\left( \frac{\sqrt{x}}{(\log x)^2} \right)\]
and (still under GRH) we can use the explicit formula to show that $\sum_{n \le x} \chi(n)\Lambda(n)$ is typically of order $\asymp \sqrt{x}$, in the sense that
\begin{equation}\label{eq:l2 primes} \frac{1}{X} \int_{X}^{2X} \left| \frac{\sum_{n \le x} \chi(n)\Lambda(n)}{\sqrt{x}}\right|^2 dx \ll 1,
\end{equation}
see \cite[Thm.~13.5]{Montgomery2007}. Under linear independence, one can show that the random variable
\[e^{-y/2}\sum_{n \le e^y} \chi(n)\Lambda(n)\]
has a limiting distribution with expected value $0$ (here $y$ is chosen uniformly at random between $0$ and $Y$, and $Y \to \infty$). The exponential change of variables leads to the appearance of logarithmic density. To summarize, $(\log x /\sqrt{x})\sum_{n \le x} \chi(n) \Lambda(n) /\log n$ (with $x=e^y$) has expectation $0$ and order of magnitude $\asymp 1$. The bias comes from the term $- \mathbf{1}_{n=p^2, \, p\text{ prime}}\chi^2(p)/2$. Indeed,
\[ -\frac{1}{2}\sum_{n \le x} \mathbf{1}_{n=p^2, \, p\text{ prime}} \chi^2(p)= -\frac{1}{2} \sum_{p \le \sqrt{x}} \chi^2(p).\]
If $\chi$ is a nonreal character, $\chi^2$ is nonprincipal and GRH guarantees this sum is $o(\sqrt{x}/\log x$). However, if $\chi$ is real, this sum is of the same order of magnitude as $\sum_{n \le x} \chi(n) \Lambda(n)/\log n$, namely it is
\[ -\frac{1}{2}\sum_{n \le x} \mathbf{1}_{n=p^2, \, p\text{ prime}} \chi^2(p) = -\frac{1}{2} (\pi(\sqrt{x})+O(1)) \sim -\frac{\sqrt{x}}{\log x}\]
using the Prime Number Theorem.

Rubinstein and Sarnak replaced $\sum_{n \le x} \chi(n) \Lambda(n)/\log n$ with $\sum_{n \le x} \chi(n) \Lambda(n)/\log x$ using partial summation. This is advantageous as we have a nice explicit formula for the sum of $\chi(n)\Lambda(n)$. However, one can work directly with $\chi(n) \Lambda(n)/\log n$, whose generating function is $\log L(s,\chi)$, and this was done by Meng \cite{Meng2018} in his work on Chebyshev's bias for products of $k$ primes. Meng's approach is more flexible because it works even when the generating function has singularities which are not poles. So while $-L'(s,\chi)/L(s,\chi)=\sum_{n \ge 1} \Lambda(n)\chi(n)/n^s$ is meromorphic with simple poles at zeros of $L(s,\chi)$, which leads to the explicit formula by using the Residue theorem, Meng's approach can deal with $-\log L(s,\chi)$ directly although it does not have poles, rather it has essential singularities.
Meng's work applies in particular to $k=1$ and $k=2$, thus generalizing Rubinstein and Sarnak as well as Ford and Sneed \cite{Ford2010}.
\subsection{The generating function of sums of two squares}\label{sec:generating}
Let us now return to sums of two squares. Let $a,b$ be residues modulo $q$ with $(a,q)=(b,q)=1$ and $a\equiv b \equiv 1 \bmod (4,q)$. Orthogonality of characters shows 
\begin{equation}\label{eq:sdiff}
 S(x;q,a)-S(x;q,b) = \frac{1}{\phi(q)} \sum_{\chi_0 \neq \chi \bmod q} \overline{(\chi(a)-\chi(b))}\sum_{n \le x} \sosind(n) \chi(n).
\end{equation}
We want to relate $\sum_{n \le x}\sosind(n) \chi(n)$ to $L$-functions and their zeros, and obtain an analogue of the explicit formula for primes. The generating function of sums of two squares was studied by Landau \cite{Landau1908}, who showed that for $\Re s > 1$,
\begin{equation}\label{eq:landform} \sum_{n \ge 1} \frac{ \sosind(n)}{n^s} = \sqrt{\zeta(s)L(s,\quadratic)} H(s)
\end{equation}
where $H$ has analytic continuation to $\Re s >1/2$. Here the square root is chosen so that $\sqrt{\zeta(s)}$ and $\sqrt{L(s,\quadratic)}$ are real and positive for $s$ real and greater than $1$. This representation of the generating function plays a crucial role in the study of the distribution of sums of two squares, see e.g. \cite{Gorodetsky2021}.

Later, Shanks \cite[p.~78]{Shanks1964} and Flajolet and Vardi \cite[pp.~7--9]{flajolet1996zeta} (compare \cite[Eq.~(3)]{Radziejewski}, \cite[Lem.~2.2]{Gorodetsky2021}) proved independently the identity
\begin{equation}\label{eq:geniden} 
\sum_{n \ge 1} \frac{\sosind(n)}{n^s} = \sqrt{\zeta(s)L(s,\quadratic)(1-2^{-s})^{-1}} \prod_{k \ge 1} \left(\frac{(1-2^{-2^{k}s})\zeta(2^k s)}{L(2^ks,\quadratic)}\right)^{2^{-k-1}},
\end{equation}
and their proof can yield an analogue of \eqref{eq:geniden} with a twist by $\chi(n)$.
Shanks and Flajolet and Vardi were interested in efficient computation of the constant $K$, and this identity leads to
\[ K=\frac{1}{\sqrt{2}}\prod_{k \ge 1}\left(\frac{\zeta(2^k)(1-2^{-2^k})}{L(2^k,\quadratic)}\right)^{2^{-k-1}}.\] 
Since both sides of \eqref{eq:geniden} enjoy Euler products, this identity can be verified by checking it locally at each prime; one needs to check $p=2$, $p\equiv 1 \bmod 4$ and $p \equiv 3 \bmod 4$ separately using \eqref{eq:characterization}. For the purpose of this paper we do not need the terms corresponding to $k>1$ in \eqref{eq:geniden}. What we need is stated and proved in Lemma~\ref{lem:analytic}, namely that
	\begin{equation}\label{eq:simpiden} 	F(s,\chi):=\sum_{n \ge 1} \frac{\sosind(n) \chi(n)}{n^s}  = \sqrt{L(s,\chi)L(s,\chi  \quadratic)} \sqrt[4]{\frac{L(2s,\chi^2)}{L(2s,\chi^2 \quadratic)}} G(s,\chi)
\end{equation}
for $G$ which is analytic and nonvanishing in $\Re s > 1/4$ and bounded in $\Re s \ge 1/4 +\varepsilon$ for each $\varepsilon>0$. The important feature of this formula is that it allows us to analytically continue $F(s,\chi)$ to the left of $\Re s =1/2$ (once we remove certain line segments), as opposed to \eqref{eq:landform} whose limit is $\Re s > 1/2$. See the discussion at the end \S\ref{sec:analyticcont}.

Recently, a formula very similar to \eqref{eq:simpiden} was used by Porritt \cite{Porritt2020} in his study of character sums over polynomials with $k$ prime factors, and $k$ tending to $\infty$. We state his formula in the integer setting. Let $\Omega$ be the additive function counting prime divisors with multiplicity. He showed that, for complex $z$ with $|z|<2$, we have \cite[Eq.~(4)]{Porritt2020}
\[ \sum_{n \ge 1} \frac{z^{\Omega(n)}\chi(n)}{n^s} = L(s,\chi)^z L(2s,\chi^2)^{\frac{z^2-z}{2}} E_{z}(s,\chi)\]
for $E_z(s,\chi)$ which is analytic in $\Re s >\max\{1/3, \log_2 |z|\}$. He then proceeds to apply a Selberg-Delange type analysis, leading to an explicit formula for a polynomial analogue of $\sum_{n \le x, \, \Omega(n)=k} \chi(n)$ where $k$ grows like $a\log \log x$ for $a \in (0,2^{1/2})$ (in the polynomial world, $q$ and $q^{1/2}$ replace $2$ and $2^{1/2}$, where $q$ is the size of the underlying finite field). His results show a strong Chebyshev's bias once $a>1.2021\ldots$ \cite[Thm.~4]{Porritt2020}.
\subsection{Analyzing singularities}
We shall analyze each of the sums in \eqref{eq:sdiff}. We first observe that we do not need to analyze the sums corresponding to  $\chi$ or $\chi\quadratic$ being principal, because these characters do not contribute to \eqref{eq:sdiff} (as $\chi_0(a)=\chi_0(b)$).

Assume GRH and let $\chi$ be a nonprincipal character such that $\chi \quadratic$ is also nonprincipal. We apply a truncated Perron's formula to $\sum_{n \le x}\sosind(n)\chi(n)$ (Corollary~\ref{cor:perrons}). We then want to shift the contour to $\Re s = 1/2 -c$ ($c=1/10$, say) and apply the Residue theorem. 
We cannot do it, because $L(s,\chi)$ and $L(s,\chi\quadratic)$ have zeros on $\Re s= 1/2$ so $F(s,\chi)$, which involves the square root of $L(s,\chi)L(s,\chi\quadratic)$, cannot be analytically continued to $\Re s \ge 1/2 - c$. Let us analyze the zeros and poles, in the half-plane $\Re s > 1/4$, of $L(s,\chi)$, $L(s,\chi \quadratic)$, $L(2s,\chi^2)$ and $L(2s,\chi^2 \quadratic)$. These are the functions which appear in \eqref{eq:simpiden} and dictate the region to which we may analytically continue $F(s,\chi)$.
\begin{itemize}
	\item We have zeros of $L(s,\chi)$, $L(s,\chi\quadratic)$ on $\Re s = 1/2$ (only) by GRH. We do not have poles at $s=1$ because we assume $\chi$, $\chi\quadratic$ are nonprincipal.
	\item Under GRH, $L(2s,\chi^2)$ and $L(2s,\chi^2 \quadratic)$ have no zeros in $\Re s >1/4$. 
	\item If $\chi^2$ is principal then $L(2s,\chi^2)$ has a simple pole at $s=1/2$. Similarly, if $\chi^2 \quadratic$ is principal then $L(2s,\chi^2 \quadratic)$ has a simple pole at $s=1/2$. If $\chi^2$ and $\chi^2 \quadratic$ are nonprincipal then these $L$-functions have no poles. 
\end{itemize}
We call these zeros and poles `singularities of $F$'. They all lie on $\Re s = 1/2$. We construct an open set $A_{\chi}$ by taking the half-plane $\Re s > 1/4$ are removing the segments $\{\sigma+it:1/4<\sigma \le 1/2\}$ for every singularity $1/2+it$. This domain is simply connected and $L(s,\chi)$, $L(s,\chi\quadratic)$, $L(2s,\chi^2)$ and $L(2s,\chi^2\quadratic)$ have no poles or zeros there. Hence, they have well-defined logarithms there and we may analytically continue $F(s,\chi)$ to $A_{\chi}$.

Although we cannot literally shift the contour to the left of $\Re s = 1/2$, we can move to a contour which stays in $A_{\chi}$ and is to the left of $\Re s = 1/2$ `most of the time'. Specifically, we shall use truncated Hankel loop contours going around the singularities, joined to each other vertically on $\Re s = 1/2-c$, as in Meng \cite{Meng2018,Meng2020}. The precise contour is described in \S\ref{sec:contour}. See Figure~\ref{fig:Contour} for depiction.
A truncated Hankel loop contour around a singularity $\rho$ of $F(s,\chi)$ is a contour $\mathcal{H}_{\rho}$ traversing the path depicted in Figure~\ref{fig:Hankel}. It is parametrized in \eqref{eq:hankel}. 
\begin{figure}
	\centering
	\begin{minipage}{.5\textwidth}
		\centering
		\begin{tikzpicture}
			\def\N{3}
	\def\gap{0.1}
	\def\delta{0.2}
	\def\radius{0.15}
	\def\epsilon{0.02}
	\def\cright{0.42}
	\def\c{0.2}
	\def\T{1.25}
	\def\kap{1.5}
	\def\cons{2*\delta+2*\epsilon}
	\def\n1{asin(\epsilon/\radius)}
	\def\arroww{0.5pt}
	\def\Circs{5}
	
	\draw[line width=0.6pt,->](-0.1,0) -- (2*\N,0);
	\draw[line width=0.6pt,->] (0,-1.5*\N) -- (0,1.5*\N);
	
	\draw[line width=0.8pt,decoration={markings,
		mark=at position 0.07 with{\arrow[line width =\arroww]{>}},
		mark=at position 0.27 with{\arrow[line width =\arroww]{>}},
		mark=at position 0.47 with{\arrow[line width=\arroww]{>}},
		mark=at position 0.53 with{\arrow[line width=\arroww]{>}},
		mark=at position 0.65 with {\arrow[line width =\arroww]{>}},
		mark=at position 0.8 with{\arrow[line width =\arroww]{>}},
		mark=at position 0.955 with{\arrow[line width =\arroww]{>}}},
	postaction={decorate}]
	(\N*\c,-\N*\T+\N*\Circs*\cons+2*\N*\delta+2*\N*\epsilon)--(\N*\kap,-\N*\T+\N*\Circs*\cons+2*\N*\delta+2*\N*\epsilon)
	(\N*\kap,\N*0*\cons-\N*\T)--(\N*\c,\N*0*\cons-\N*\T);

	\foreach \x in {0,...,\Circs}
	{\draw[line width=0.8pt,decoration={markings,
			mark=at position 0.07 with{\arrow[line width =\arroww]{>}},
			mark=at position 0.22 with{\arrow[line width =\arroww]{>}},
			mark=at position 0.4 with{\arrow[line width =\arroww]{>}},
			mark=at position 0.63 with{\arrow[line width =\arroww]{>}},
			mark=at position 0.85 with{\arrow[line width =\arroww]{>}}},
		postaction={decorate}]
		(\N*\c,\N*\x*\cons-\N*\T)--(\N*\c,-\N*\T+\N*\delta+\N*\x*\cons)--(\N*\cright,-\N*\T+\N*\delta+\N*\x*\cons)
		(\N*\cright,-\N*\T+\N*\delta+\N*\x*\cons) arc[start angle=180+\n1, end angle=540-\n1,radius=\N*\radius]
		(\N*\cright,-\N*\T+\N*\delta+2*\N*\epsilon+\N*\x*\cons)--(\N*\c,-\N*\T+\N*\delta+2*\N*\epsilon+\N*\x*\cons)--(\N*\c,-\N*\T+\N*\x*\cons+2*\N*\delta+2*\N*\epsilon);}
			\end{tikzpicture}
		\captionof{figure}{Contour of integration}
		\label{fig:Contour}
	\end{minipage}%
	\begin{minipage}{.5\textwidth}
		\centering
		\begin{tikzpicture}
			\def\N{10}
\def\radius{0.15}
\def\epsilon{0.02}
\def\cright{0.42}
\def\c{0.2}
\def\n1{asin(\epsilon/\radius)}
\def\arroww{1pt}
\def\Circs{5}
\def\cosn1{cos(\n1)}

\draw[line width=1pt,decoration={markings,
		mark=at position 0.07 with{\arrow[line width =\arroww]{>}},
		mark=at position 0.22 with{\arrow[line width =\arroww]{>}},
		mark=at position 0.4 with{\arrow[line width =\arroww]{>}},
		mark=at position 0.63 with{\arrow[line width =\arroww]{>}},
		mark=at position 0.94 with{\arrow[line width =\arroww]{>}}},
	postaction={decorate}]
	(\N*\c,0)--(\N*\cright,0)
	(\N*\cright,0) arc[start angle=180+\n1, end angle=540-\n1,radius=\N*\radius]
	(\N*\cright,2*\N*\epsilon)--(\N*\c,2*\N*\epsilon);
    
    \draw[line width=1pt](\N*\cright+\N*\radius,\N*\epsilon)--++(45:\N*\radius);
    
\coordinate (H) at (\N*\cright+\N*\radius,\N*\epsilon); \node[below] at (H){\small\bf {$\rho$}};
\fill   (\N*\cright+\N*\radius,\N*\epsilon) circle (0.05);

\coordinate (Q) at (\N*\cright+\N*\radius*1.4,\N*\epsilon+\N*\radius*0.3); \node[below] at (Q){\small\bf {$r$}};

\end{tikzpicture}
		\captionof{figure}{Truncated Hankel loop contour around $\rho$ with radius $r$}
		\label{fig:Hankel}
	\end{minipage}
\end{figure}
Given a character $\chi$ and a singularity $\rho$ of $F(s,\chi)$ we let
\[ f(\rho,\chi,x) := \frac{1}{2\pi i} \int_{\mathcal{H}_{\rho}} F(s,\chi)x^s \frac{ds}{s}\]
be the Hankel contour integral around $\rho$. By analyticity, the value of $f(\rho,\chi,x)$ is independent of $r$ (once $r$ is small enough) and for our purposes we choose $r=o(1/\log x)$.
We end up obtaining
\begin{equation}\label{eq:bothcases} \sum_{n \le x} \sosind(n) \chi(n) \approx \sum_{\substack{\rho:\, L(\rho,\chi)L(\rho,\chi \quadratic)=0\\ \text{or }\rho=\frac{1}{2}}} f(\rho,\chi,x).
\end{equation}
See Lemma~\ref{lem:perronshifted} for a  statement formalizing \eqref{eq:bothcases} (in practice we sum only over zeros up to a certain height). If $F(s,\chi) \sim C(s-\rho)^{m}$ asymptotically as $s \to \rho^+$ (i.e. as $s-\rho$ tends to $0$ along the positive part of the real line) for one of the singularities $\rho$, it can be shown that 
\begin{equation}\label{eq:loopsize}
f(\rho,\chi,x) \sim \frac{C}{\Gamma(-m)}\frac{x^{\rho}}{\rho}(\log x)^{-1-m}	
\end{equation}
as $x \to \infty$. An informal way to see this is 
\[ \int_{\mathcal{H}_{\rho}} F(s,\chi) x^s \frac{ds}{s} \approx \frac{C}{\rho}\int_{\mathcal{H}_{\rho}} (s-\rho)^m x^s ds = \frac{Cx^{\rho}}{\rho} (\log x)^{-1-m} \int_{(\log x)(\mathcal{H}_{\rho}-\rho)} z^m e^z dz\]
and the last integral gives \eqref{eq:loopsize} by Hankel's original computation \cite[Thm.~II.0.17]{Tenenbaum2015}.\footnote{To justify the first passage it suffices to show $\int_{\mathcal{H}_{\rho}} |G(s)x^s| |ds| =o(x^{\Re \rho}(\log x)^{-1-m})$ for $G(s) = F(s,\chi)/s - C(s-\rho)^m/\rho$, which is possible in our case.}
In particular, from analyzing $\sqrt{L(s,\chi)L(s,\chi\quadratic)}$ we see that we have
\[ f(\rho,\chi,x) \sim c_{\rho}\frac{x^{\rho}}{\rho}(\log x)^{-1-\frac{m_\rho}{2}}, \qquad c_{\rho} = \frac{\lim_{s \to \rho^+} F(s,\chi)(s-\rho)^{-\frac{m_{\rho}}{2}}}{\Gamma(-\frac{m_{\rho}}{2})}, \]
for any given $\rho \neq 1/2$ where $m_{\rho}$ is the multiplicity of $\rho$ in $L(s,\chi)L(s,\chi\quadratic)$. As $m_{\rho} \ge 1$, we are led to think of $\sum_{ \rho \neq 1/2} f(\rho,\chi,x)$ as a quantity of order $\asymp \sqrt{x}(\log x)^{-3/2}$. However, we are not able to use \eqref{eq:loopsize} in order to bound $\sum_{\rho \neq 1/2} f(\rho,\chi,x)$ efficiently. We proceed via a different route and show 
\begin{equation}\label{eq:average} \frac{1}{X}\int_{X}^{2X} \left|\frac{\sum_{\rho \neq 1/2} f(\rho,\chi,x)}{\sqrt{x}(\log x)^{-3/2}} \right|^2 dx \ll 1 
\end{equation}
without making use of \eqref{eq:loopsize}. It follows that $\sum_{\rho\neq 1/2} f(\rho,\chi,x)\ll \sqrt{x}(\log x)^{-3/2}$ most of the time. This estimate is analogous to \eqref{eq:l2 primes} and its proof is similar too. We believe this sum is $ \ll_{\varepsilon}\sqrt{x}(\log x)^{\varepsilon-3/2}$ always but are not able to show this. This is similar to how GRH can show $\sum_{n \le x}  \Lambda(n)\chi(n) \ll \sqrt{x} (\log x)^2$, but the true size in this question is expected to be $\ll_{\varepsilon} \sqrt{x} (\log x)^{\varepsilon}$. Here one should think of $f(\rho,\chi,x)$ as an analogue of the expression $-x^{\rho}/\rho$ from the explicit formula. 

Finally, let us analyze the contribution of the (possible) singularity at $1/2$. If $L(1/2,\chi)L(1/2,\chi \quadratic)=0$ or if $\chi^2 \quadratic$ is principal we can show (e.g. using \eqref{eq:loopsize}) that $f(1/2,\chi,x) \ll \sqrt{x}(\log x)^{-5/4}$, see Lemma~\ref{lem:hankelhalf}. The constant $5/4$ arises from analyzing $\sqrt{L(s,\chi)L(s,\chi\quadratic)}$ and $\sqrt[4]{L(2s,\chi^2)/L(2s,\chi^2 \quadratic)}$ and applying \eqref{eq:loopsize} with $m \ge 1/4$. It follows from \eqref{eq:bothcases} and \eqref{eq:average} that
\[ \frac{1}{X}\int_{X}^{2X} \left| \frac{\sum_{n \le x}\sosind(n)\chi(n)}{\sqrt{x}(\log x)^{-5/4}} \right|^2 dx \ll 1\]
unless $\chi^2$ is principal and $L(1/2,\chi)L(1/2,\chi \quadratic)\neq 0$. This remaining case leads to the bias. Indeed, we have
\[ F(s,\chi) \sim C\left(s-\frac{1}{2}\right)^{-1/4}\]
as $s\to \frac{1}{2}^{+}$ because of the fourth root in $\sqrt[4]{L(2s,\chi^2)}$, and this allows us to show (Lemma~\ref{lem:asymphalf}) that
\[ f(1/2,\chi,x) \asymp \sqrt{x}(\log x)^{-3/4}. \]
This is bigger than the typical contribution of all the other singularities by $(\log x)^{1/2}$. 
In one line, the bias comes from the fact that the value of $m$ in the asymptotic relation $F(s,\chi) \sim C(s-\rho)^m$ ($s \to \rho^+$, $\rho$ a singularity) is minimized when $\chi^2$ is principal, $\rho=1/2$ and $L(1/2,\chi)L(1/2,\chi \quadratic)\neq0$, in which case $m=-1/4$.
	\subsection{Computational evidence}
We examine the bias for $q \in \{3,5,15\}$. For $q=3$ we only have two relevant residues modulo $q$, namely $1 \bmod 3$ -- quadratic, and $2 \bmod 3$ -- nonquadratic. We have $L(1/2,\chi) \approx 0.480$ for the unique nonprincipal Dirichlet character modulo $3$ and $L(1/2,\chi \quadratic) \approx 0.498$. Under GRH, Corollary~\ref{cor:quad} predicts $S(x;3,1)>S(x;3,2)$ for almost all $x$. Up to $10^8$, $96.8\%$ of the time $S(x;3,1)>S(x;3,2)$. See Figure~\ref{fig:mod3} for the (quite oscillatory) graph of $S(x;3,1)-S(x;3,2)$ up to $10^7$.

\begin{figure}[h!]
	\caption{Graph of $S(x;3,1)-S(x;3,2)$ up to $10^7$.}
	\centering
	\includegraphics[width=15cm]{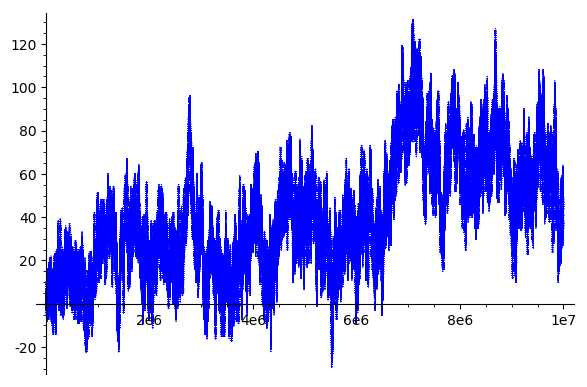}
	  \label{fig:mod3}
\end{figure}

For $q=5$ we have $4$ possible residues: $1$ and $4 \bmod 5$, both quadratic, and $2$ and $3 \bmod 5$, nonquadratic. We have $L(1/2,\chi)\approx 0.231$ and $L(1/2,\chi \quadratic)\approx 1.679$ for $\chi$, the unique nonprincipal quadratic character modulo $5$. 
 Corollary~\ref{cor:quad} predicts, under GRH, that $S(x;5,1)$ and $S(x;5,4)$ are almost always greater than $S(x;5,2)$ and $S(x;5,3)$.
The value of $C_{5,a,b}$ is simply $\chi(a)-\chi(b)$ times $(1+1/\sqrt{2})^{-1/2}\sqrt{L(1/2,\chi)L(1/2,\chi \quadratic)}$. For $(a,b) \in \{ (1,2),(1,3),(4,2),(4,3)\}$ its value is $\approx 0.7309$, since $\chi(a)=1$ and $\chi(b)=-1$. For $(a,b) \in \{ (1,2),(1,3),(4,2),(4,3)\}$, we find that the percentage of integers $n \le 10^7$ with $S(n;5,a)>S(n;5,b)$ is $96.1\%$, $95.2\%$, $95.3\%$ and $94.6\%$, respectively.

For $q=15$ we have the quadratic residues $1,4$ and the nonquadratic residues $2,7,8,11,13,14$. We expect $S(x;5,a)-S(x;5,b)$ to be positively biased for $a \in \{1,4\}$ and $b \in \{2,7,8,11,13,14\}$. Moreover, we expect a bias also for $S(x;5,a)-S(x;5,b)$ whenever $a\neq b \in \{2,7,8,11,13,14\}$ and $a/b \not\equiv 4 \bmod 15$, which means $(a,b) \neq (2,8)$, $(7,13)$, $(11,14)$. The direction of the bias in this case is harder to predict.
See Table~\ref{tab:1} for a table of the values of $C_{15,a,b}$ and Table~\ref{tab:2} for $\#\{n \le 10^7: S(n;15,a)>S(n;15,b)\}/10^7$ (in percentages). 
We omit pairs with $a \ge b$ due to symmetry and pairs $a,b$ with $a/b$ being a quadratic residue.
\begin{table}[H]
	\centering
\begin{tabular}{cccccccccc}
			&  & \multicolumn{8}{ c }{$b$} \\ 
			& & 1 & 4 & 2 & 7 & 8 & 11 & 13 & 14 \\
			\cline{3-10}
			\multicolumn{1}{ c  }{\multirow{7}{*}{$a$} } &
			\multicolumn{1}{ c| }{1} &  &  & 1.427 & 9.698 & 1.427 &9.931 & 9.698 & 9.931   
			\\ 
			\multicolumn{1}{ c  }{} &
			\multicolumn{1}{ c| }{4} &  &  & 1.427 &   9.698   & 1.427 & 9.931 & 9.698 &9.931  \\ 
			\multicolumn{1}{ c  }{} &
			\multicolumn{1}{ c| }{2} &  &  &   & 8.271  & & 8.504 & 8.271 & 8.504 \\ 
			\multicolumn{1}{ c  }{} &
			\multicolumn{1}{ c| }{7} &  &  &  & & -8.271 & 0.233 &  & 0.233 \\    
			\multicolumn{1}{ c  }{} &
			\multicolumn{1}{ c| }{8} &  &  &  & & & 8.504 & 8.271 & 8.504\\
						\multicolumn{1}{ c  }{} &
			\multicolumn{1}{ c| }{11} &  &  &  & & & &  -0.233 &\\
						\multicolumn{1}{ c  }{} &
			\multicolumn{1}{ c| }{13} &  &  &  & & & &  & 0.233
		\end{tabular}
\caption{Values of $C_{15,a,b}$.}
\label{tab:1}
\end{table}

\begin{table}[H]
	\centering
	\begin{tabular}{cccccccccc}
		&  & \multicolumn{8}{ c }{$b$} \\ 
		& & 1 & 4 & 2 & 7 & 8 & 11 & 13 & 14 \\
		\cline{3-10}
		\multicolumn{1}{ c  }{\multirow{7}{*}{$a$} } &
		\multicolumn{1}{ c| }{1} &  &  & 93.99 & 99.99 & 86.12 & 99.98 & 99.99 & 99.99  
		\\ 
		\multicolumn{1}{ c  }{} &
		\multicolumn{1}{ c| }{4} &  &  & 96.28 &   99.97   & 90.72 & 99.99 & 99.99 & 99.96  \\ 
		\multicolumn{1}{ c  }{} &
		\multicolumn{1}{ c| }{2} &  &  &   & 99.90  & & 99.85 & 99.93 & 99.90 \\ 
		\multicolumn{1}{ c  }{} &
		\multicolumn{1}{ c| }{7} &  &  &  & & 0.03 & 57.99 &  & 57.99 \\    
		\multicolumn{1}{ c  }{} &
		\multicolumn{1}{ c| }{8} &  &  &  & & & 99.52 & 99.96 & 99.99\\
		\multicolumn{1}{ c  }{} &
		\multicolumn{1}{ c| }{11} &  &  &  & & & &  40.19 &\\
		\multicolumn{1}{ c  }{} &
		\multicolumn{1}{ c| }{13} &  &  &  & & & &  & 59.23
	\end{tabular}
	\caption{Percentage of $n \le 10^7$ with $S(n;15,a)>S(n;15,b)$.}
\label{tab:2}
\end{table}
We see that the two tables are correlated. This is not a coincidence: the proof of Theorem~\ref{thm:bias} actually shows that $S(x;q,a)-S(x;q,b) = \phi(q)^{-1}C_{q}C_{q,a,b}\sqrt{x}/(\log x)^{3/4} + E(x)$ where $C_q$ is positive constant depending only on $q$ and $E(x)$ is a function which, on average, is smaller than $\sqrt{x}/(\log x)^{3/4}$. Concretely, $(1/X)\int_{X}^{2X} |E(x)|^2 dx \ll X/(\log X)^{5/2}$. So, for most values of $x$, $S(x;q,a)-S(x;q,b)$ is proportional to $C_{q,a,b}$.
\subsection{Martin's conjecture}\label{sec:martin conj}
Let $\omega$ be the additive function counting prime divisors (without multiplicity). In \cite{Meng2020}, Meng states a conjecture of Greg Martin, motivated by numerical data, saying that
\[ \left\{ x: \sum_{\substack{n \le x\\ n \equiv 1 \bmod 4}} \omega(n) < \sum_{\substack{n \le x\\ n \equiv 3 \bmod 4}} \omega(n)\right\}\]
contains all sufficiently large $x$. Meng assumed GRH and GSH to prove that this set has logarithmic density $1$. He also obtains results for other moduli under Chowla's Conjecture, and studies an analogous problem with the completely additive function $\Omega$. Meng writes: `In order to prove the full conjecture, one may need to formulate new ideas and introduce more powerful tools to bound the error terms of the summatory functions' \cite[Rem.~4]{Meng2020}. We are able to prove a natural density version of Meng's result, making progress towards Martin's conjecture. We do not assume GSH.
\begin{thm}\label{thm:omega}
Fix a positive integer $q $. Assume that GRH holds for the Dirichlet $L$-functions $L(s,\chi)$ for all Dirichlet character $\chi$ modulo $q$.		
	Then, whenever $a,b$ satisfy $(a,q)=(b,q)=1$ and \begin{equation}\label{eq:lincondomega} D_{q,a,b}:=\sum_{\substack{\chi \bmod q\\ \chi^2 = \chi_0}} (\chi(a)-\chi(b)) L\left( \frac{1}{2},\chi\right)>0
	\end{equation}
	we have, as $x \to \infty$,
	\[ \# \left\{n \le x: \sum_{\substack{ m \le n \\ m \equiv a \bmod q}} \omega(m)< \sum_{\substack{ m \le n \\ m \equiv b \bmod q}} \omega(m)\right\} = x(1+o(1)) \]
	and 
	\[ \# \left\{n \le x: \sum_{\substack{ m \le n \\ m \equiv a \bmod q}} \Omega(m)> \sum_{\substack{ m \le n \\ m \equiv b \bmod q}} \Omega(m)\right\} = x(1+o(1))\]
\end{thm}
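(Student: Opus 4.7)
The plan is to repeat the architecture of the proof of Theorem~\ref{thm:bias}, replacing the identity \eqref{eq:simpiden} by analogous decompositions for $\omega$ and $\Omega$, and replacing a fractional-power singularity at $s=1/2$ by a logarithmic one. By orthogonality, for either $f=\omega$ or $f=\Omega$,
\[
\sum_{\substack{m \le n \\ m \equiv a\bmod q}}f(m)-\sum_{\substack{m \le n \\ m \equiv b\bmod q}}f(m) = \frac{1}{\phi(q)}\sum_{\chi_0 \neq \chi\bmod q}\overline{(\chi(a)-\chi(b))}\sum_{m \le n}f(m)\chi(m).
\]
A direct Euler-product computation gives, for $\Re s > 1$,
\[
\sum_{n\ge 1}\frac{\omega(n)\chi(n)}{n^s} = L(s,\chi)\,P(s,\chi), \qquad \sum_{n\ge 1}\frac{\Omega(n)\chi(n)}{n^s} = L(s,\chi)\,\widetilde P(s,\chi),
\]
with $P(s,\chi)=\sum_p \chi(p)p^{-s}$ and $\widetilde P(s,\chi)=\sum_p \chi(p)p^{-s}/(1-\chi(p)p^{-s})$. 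Taking $\log$ of the Euler product of $L(s,\chi)$ and expanding yields, for $\Re s > 1/3$,
\[
P(s,\chi) = \log L(s,\chi) - \tfrac{1}{2}\log L(2s,\chi^2) + R_1(s,\chi), \qquad \widetilde P(s,\chi) = \log L(s,\chi) + \tfrac{1}{2}\log L(2s,\chi^2) + R_2(s,\chi),
\]
with $R_1,R_2$ analytic in $\Re s > 1/3$. These play the role of \eqref{eq:simpiden}.

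Next we apply a truncated Perron formula and shift to the Hankel-loop contour of \S\ref{sec:contour}. Under GRH, the singularities of $L(s,\chi)P(s,\chi)$ and $L(s,\chi)\widetilde P(s,\chi)$ in $\Re s > 1/3$ are the zeros of $L(s,\chi)$ on $\Re s=1/2$ together with $s=1/2$ when $\chi^2=\chi_0$ (inherited from the pole of $L(2s,\chi_0)$). At a zero $\rho \neq 1/2$ of multiplicity $m_\rho \ge 1$, we have $L(s,\chi)\log L(s,\chi) \sim c\,m_\rho (s-\rho)^{m_\rho}\log(s-\rho)$ as $s\to\rho^+$; differentiating \eqref{eq:loopsize} in the exponent yields $f(\rho,\chi,x) \ll \sqrt{x}(\log x)^{-1-m_\rho}\log\log x \ll \sqrt{x}(\log x)^{-2}\log\log x$. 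At $\rho=1/2$ with $\chi^2=\chi_0$ and $L(1/2,\chi)\neq 0$, the expansion $\tfrac{1}{2}\log L(2s,\chi^2) = -\tfrac{1}{2}\log(s-1/2) + \text{analytic}$ gives
\[
L(s,\chi)P(s,\chi) \sim \tfrac{1}{2}L(\tfrac{1}{2},\chi)\log(s-\tfrac{1}{2}), \qquad L(s,\chi)\widetilde P(s,\chi) \sim -\tfrac{1}{2}L(\tfrac{1}{2},\chi)\log(s-\tfrac{1}{2}),
\]
and a standard Hankel evaluation of the log-singularity (equivalent to differentiating \eqref{eq:loopsize} in $m$ at $m=0$, and ultimately to the identity $\tfrac{1}{2\pi i}\int_H e^u \log u\,du=-1$) yields
\[
f_\omega(\tfrac{1}{2},\chi,x) \sim -L(\tfrac{1}{2},\chi)\frac{\sqrt x}{\log x}, \qquad f_\Omega(\tfrac{1}{2},\chi,x) \sim +L(\tfrac{1}{2},\chi)\frac{\sqrt x}{\log x}.
\]
For $\chi^2 \neq \chi_0$, $s=1/2$ is not a singularity. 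Only real characters thus contribute to the main term, and summing produces a leading term $\mp D_{q,a,b}\phi(q)^{-1}\sqrt{n}/\log n$ for $\omega$ and $\Omega$ respectively.

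Finally, exactly as in \eqref{eq:average}, the plan is to prove a mean-square bound $\tfrac{1}{X}\int_X^{2X}\bigl|\sum_{\rho\neq 1/2} f(\rho,\chi,x)\bigr|^2 dx = o(X/(\log X)^2)$, so that the contribution of the off-$1/2$ zeros is $o(\sqrt{x}/\log x)$ outside an exceptional set of natural density $0$. Chebyshev's inequality and finite summation over $\chi$ then pin the sign of the difference to that of the main term, and the hypothesis $D_{q,a,b}>0$ yields both density-$1$ statements, with opposite inequalities for $\omega$ and $\Omega$ as required. The main obstacle is this $L^2$ estimate; it is carried out by writing the sum as a truncated explicit-formula expansion and applying a Plancherel-type bound on the vertical segments of the Hankel contour, essentially mirroring \S\S 4--5 of the paper. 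The savings needed here are actually more comfortable than in Theorem~\ref{thm:bias}, since the main term is larger by a factor $(\log x)^{1/4}$. A minor subtlety to be resolved is verifying that the single-valued branches of $\log L(s,\chi)$ and $\log L(2s,\chi^2)$ chosen on the slit domain agree with the real-analytic branches defining $P$ and $\widetilde P$ for real $s>1$; this is what pins down the sign of the leading constant.
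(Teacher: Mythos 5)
Your proposal follows essentially the same route as the paper's proof in \S\ref{sec:omega}: the same factorizations $F_{\omega}=L\cdot(\log L-\tfrac12\log L(2s,\chi^2)+\cdots)$ and $F_{\Omega}=L\cdot(\log L+\tfrac12\log L(2s,\chi^2)+\cdots)$, the same Hankel-loop contour shift, the same evaluation $f_{\omega}(1/2,\chi,x)\sim -L(1/2,\chi)\sqrt{x}/\log x$ coming from the logarithmic singularity of $\log L(2s,\chi_0)$ at $s=1/2$, and the same mean-square bound on the off-$1/2$ zeros followed by Chebyshev's inequality. The one technicality worth making explicit (handled in the proof of Lemma~\ref{lem:contourshiftomega}) is that $\log L(s,\chi)$ can be large where $L$ is small, so the horizontal segments of the shifted contour must be placed at heights $T',T''=T+O(1)$ where $\log|L(\sigma+iT',\chi)|\gg -\log T$ holds.
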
 
The proof is given in \S\ref{sec:omega}. If $a$ is a quadratic residue modulo $q$ and $b$ is not, a sufficient condition for $D_{q,a,b}$ to be positive is Chowla's Conjecture.
	
	\section{Preparatory lemmas}\label{sec:prep}
	Given a Dirichlet character $\chi$ modulo $q$ we write
	\[ F(s,\chi) = \sum_{n \in S} \frac{\chi(n)}{n^s} = \prod_{p \not \equiv 3 \bmod 4}(1-\chi(p)p^{-s})^{-1} \prod_{p \equiv 3 \bmod 4}(1-\chi^2(p)p^{-2s})^{-1}.\]
	This converges absolutely for $\Re s > 1$. For $\Re s = 1$ (or smaller) it does not, because such convergence implies $\sum_{p \not \equiv 3 \bmod 4,\, p \nmid q}1/p$ converges. Observe that $F(s,\chi)$ does not vanish for $\Re s > 1$.
	
	We shall use the convention where $\sigma$ and $t$ denote the real and complex parts of $s \in \CC$. 
	\subsection{Perron}
	\begin{lem}[Effective Perron's formula]\cite[Thm.~II.2.3]{Tenenbaum2015}\label{lem:effective0}
		Let $F(s)=\sum_{n \ge 1} a_n/n^s$ be a Dirichlet series with abscissa of absolute convergence $\sigma_a<\infty$. For $\kappa>\max\{0,\sigma_a\}$, $T \ge 1$ and $x \ge 1$ we have
		\begin{equation}
			\sum_{n \le x} a_n = \frac{1}{2\pi i} \int_{\kappa-iT}^{\kappa+iT} F(s)x^s \frac{ds}{s} + O\left(x^{\kappa}\sum_{n \ge 1} \frac{|a_n|}{n^{\kappa}(1+T|\log(x/n)|)}\right),
		\end{equation}
		with an absolute implied constant.
	\end{lem}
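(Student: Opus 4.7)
The proof follows the standard template for truncated Perron-type formulas, and I would organize it around a quantitative version of the Mellin inversion identity for the Heaviside function, followed by termwise integration of the Dirichlet series.

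The key auxiliary estimate I would establish is the following discontinuous integral bound: for $y>0$, $y\neq 1$,
\begin{equation*}
\frac{1}{2\pi i}\int_{\kappa-iT}^{\kappa+iT}\frac{y^s}{s}\,ds = \mathbf{1}_{y>1} + O\!\left(\frac{y^\kappa}{T|\log y|}\right),
\end{equation*}
together with the trivial bound $O(y^\kappa)$ obtained by estimating the integrand on the segment. One proves this by contour shifting. For $y>1$, close the vertical segment to the left by a rectangle with left side at $\Re s=-R$ and horizontal sides at $\Im s=\pm T$; as $R\to\infty$, the left vertical contribution vanishes since $y^{-R}\to 0$, the horizontal legs contribute $O(y^\kappa/(T|\log y|))$ upon integrating $y^\sigma/|s|\le y^\sigma/T$ over $\sigma\in(-\infty,\kappa]$, and the residue at the simple pole $s=0$ gives $1$. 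For $y<1$ the analogous shift to the right picks up no residue. Combining with the trivial bound unifies everything into the convenient form $O(y^\kappa/(1+T|\log y|))$.

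With this in hand, absolute convergence of $F$ at $\sigma=\kappa>\sigma_a$ justifies interchanging sum and integral:
\begin{equation*}
\frac{1}{2\pi i}\int_{\kappa-iT}^{\kappa+iT} F(s)x^s\frac{ds}{s} = \sum_{n\ge 1} a_n\cdot\frac{1}{2\pi i}\int_{\kappa-iT}^{\kappa+iT}(x/n)^s\frac{ds}{s}.
\end{equation*}
The indicator $\mathbf{1}_{x/n>1}$ produces the main term $\sum_{n<x} a_n$ (equivalently $\sum_{n\le x}a_n$ up to a boundary contribution when $x\in\mathbb{N}$), while the error aggregates termwise into
\begin{equation*}
O\!\left(x^\kappa\sum_{n\ge 1}\frac{|a_n|}{n^\kappa\,(1+T|\log(x/n)|)}\right),
\end{equation*}
which is exactly the claimed bound.

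The one point requiring care is the behaviour when $x$ is an integer (so $\log(x/n)=0$ for $n=x$) or when $n$ lies very close to $x$. Here the naive bound $y^\kappa/(T|\log y|)$ blows up, but the merged form $1/(1+T|\log(x/n)|)$ is uniformly bounded by $1$, and the exceptional integral $\frac{1}{2\pi i}\int_{\kappa-iT}^{\kappa+iT}ds/s$ equals $\frac{1}{\pi}\arctan(T/\kappa)$, which contributes $O(1)$ absorbed by the same denominator. This is the step I expect to be the most delicate to write out cleanly, since one wants a single error expression valid for all $n$ without case splits; the trick is to estimate $|(x/n)^s/s|\le (x/n)^\kappa/\max(T,\kappa)$ uniformly on the contour and to use it whenever $T|\log(x/n)|\le 1$, falling back on the contour-shift bound otherwise.
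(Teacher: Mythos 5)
This lemma is quoted verbatim from Tenenbaum (Thm.~II.2.3 there); the paper gives no proof, and your outline follows exactly the standard route Tenenbaum uses: the truncated discontinuous integral for $\mathbf{1}_{y>1}$, termwise integration justified by absolute convergence at $\sigma=\kappa$, and a merged error $\min\bigl(1,\tfrac{1}{T|\log y|}\bigr)y^\kappa$. So the architecture is right. There is, however, one genuine error in the step you yourself flag as delicate: the claimed uniform bound $|(x/n)^s/s|\le (x/n)^\kappa/\max(T,\kappa)$ on the vertical segment is false. Near $t=0$ one has $|s|=\sqrt{\kappa^2+t^2}\approx\kappa$, so the best pointwise bound is $(x/n)^\kappa/|s|$, and integrating $1/\sqrt{\kappa^2+t^2}$ over $|t|\le T$ produces $\asymp\log(2T/\kappa)$, not $O(1)$. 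Thus ``estimating the integrand on the segment'' gives only the trivial bound $O\bigl(y^\kappa\log(2T/\kappa)\bigr)$, and the extra logarithm is not absorbed by the stated error term with an absolute implied constant (it would already degrade the $O(1)$ in Corollary~\ref{cor:effbdd} to $O(\log T)$).

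The correct way to get the clean $O(y^\kappa)$ bound for the near-diagonal terms $T|\log(x/n)|\le 1$ is not a pointwise estimate on the segment but a further contour deformation: replace the segment $[\kappa-iT,\kappa+iT]$ by the arc of the circle centered at the origin through $\kappa\pm iT$, taken to the right of the segment when $y<1$ and to the left (picking up the residue $1$ at $s=0$) when $y>1$. On that arc $y^{\Re s}\le y^\kappa$ and $|ds|/|s|=d\theta$ with total angular length at most $2\pi$, so the arc integral is $\le y^\kappa$. This is precisely Tenenbaum's Theorem~II.2.2, and with it the rest of your argument (the rectangle shift for $T|\log y|>1$, the $\tfrac{1}{\pi}\arctan(T/\kappa)$ evaluation at $n=x$, and the termwise aggregation) goes through as written.
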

	This lemma leads to the following, which is a variation on \cite[Cor.~II.2.4]{Tenenbaum2015}.
	\begin{cor}\label{cor:effbdd}
		Suppose $|a_n| \le 1$. Let $F(s)=\sum_{n \ge 1} a_n/n^s$. Then, for $x,T \gg 1$,
		\begin{equation}
			\sum_{n \le x} a_n = \frac{1}{2\pi i} \int_{1+\frac{1}{\log x}-iT}^{1+\frac{1}{\log x}+iT} F(s)x^s \frac{ds}{s} + O\left(1 + \frac{x \log x}{T}\right),	
		\end{equation}
		with an absolute implied constant.
	\end{cor}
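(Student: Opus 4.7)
The plan is to apply Lemma~\ref{lem:effective0} with the specific choice $\kappa = 1 + 1/\log x$. Since $x^\kappa = e \cdot x$, the main integral has the desired form, and the task reduces to showing that
\[
E := x^\kappa \sum_{n \ge 1} \frac{|a_n|}{n^\kappa (1+T|\log(x/n)|)} \ll 1 + \frac{x\log(xT)}{T}.
\]
Using $|a_n| \le 1$ and $x^\kappa \ll x$, I will split the sum into three ranges according to how close $n$ is to $x$: (i) $n \le x/2$ or $n \ge 2x$; (ii) $x/2 < n < 2x$ with $n \notin \{\lfloor x\rfloor, \lceil x \rceil\}$; (iii) $n \in \{\lfloor x\rfloor, \lceil x \rceil\}$.

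For range (i), the logarithm is bounded below: $|\log(x/n)| \ge \log 2$, so $1+T|\log(x/n)| \gg T$. The resulting sum is bounded by
\[
\frac{1}{T} \sum_{n \ge 1} \frac{1}{n^{1+1/\log x}} = \frac{\zeta(1+1/\log x)}{T} \ll \frac{\log x}{T},
\]
using the classical estimate $\zeta(1+\varepsilon) \sim 1/\varepsilon$. Multiplied by $x^\kappa \ll x$, this yields a contribution of size $O(x\log x/T)$.

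For range (ii), I will use the inequality $|\log(x/n)| \gg |n-x|/x$ valid in $x/2 < n < 2x$, together with $n \asymp x$. Writing the remaining indices as $n = \lfloor x\rfloor \pm k$ for $k \ge 1$, the contribution is bounded by
\[
\frac{1}{x} \sum_{1 \le k \le x} \frac{1}{1 + Tk/x} \ll \frac{1}{x} \cdot \frac{x}{T}\log(2 + T) \ll \frac{\log T}{T},
\]
which after multiplying by $x^\kappa$ gives $O(x\log T/T)$. For range (iii), the at most two terms satisfy $|\log(x/n)| \ll 1/x$, so each summand is bounded by $1/x$, and the total contribution is $O(1)$ after multiplying by $x^\kappa$. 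Combining the three ranges yields the claimed error term.

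There is no real obstacle here; the only place that requires attention is the interplay between the pole of $\zeta$ at $1$ (controlling range (i)) and the harmonic-type sum in range (ii), both of which conspire to give the $\log(xT)$ factor. The choice $\kappa = 1 + 1/\log x$ is what makes $x^\kappa$ linear in $x$ while keeping $\zeta(\kappa) \ll \log x$ balanced against $T$.
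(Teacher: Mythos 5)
Your proposal is correct and follows essentially the same route as the paper: apply Lemma~\ref{lem:effective0} with $\kappa=1+1/\log x$ and split the error sum according to the distance of $n$ from $x$, with the far range giving $x\log x/T$ via $\zeta(1+1/\log x)\ll\log x$, the middle range giving $x\log T/T$, and the one or two integers nearest $x$ giving the $O(1)$. The only difference is that you carry out the middle-range estimate explicitly where the paper defers to the argument in \cite[p.~220, Cor.~2.4]{Tenenbaum2015}; both are fine.
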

	\begin{proof}
		Since $|a_n|\le 1$ we have $\sigma_a \le 1$ so that we may apply Lemma~\ref{lem:effective0} with $\kappa = 1 + 1/\log x$.  The contribution of $n \le x/2$ to the error term is 
		\[\ll x \sum_{n \le x/2} \frac{1}{Tn} \ll \frac{x \log x}{T}.\]
		The contribution of  $n \ge 2x$ to the error is 
		\[ \ll x \sum_{k \ge 1} \sum_{n \in [2^k x, 2^{k+1} x)} \frac{1}{n^{\kappa}Tk} \ll x \sum_{k \ge 1} \frac{2^{-\frac{k}{\log x}}}{T} \ll \frac{x \log x}{T}.\]
		Finally, if $n \in (x/2,2x)$, the contribution is
		\[ \ll \sum_{n \in (x/2,2x)} \frac{1}{1+T|\log(x/n)|} \ll 1+\frac{x \log x}{T}\]
		where the second inequality follows e.g. by the argument in \cite[Cor.~II.2.4]{Tenenbaum2015}.
	\end{proof}
	As a special case of this corollary we have
	\begin{cor}\label{cor:perrons}
		Let $\chi$ be a Dirichlet character. We have
		\begin{equation}\label{eq:applicableperron} \sum_{\substack{n \le x\\ n \in S}}\chi(n) = \frac{1}{2\pi i}\int_{1+\frac{1}{\log x}-iT}^{1+\frac{1}{\log x}+iT} F(s,\chi)x^s \frac{ds}{s} + O\left( 1+\frac{x\log x}{T} \right).
		\end{equation}
	\end{cor}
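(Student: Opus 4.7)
The plan is to apply Corollary~\ref{cor:effbdd} directly with the sequence $a_n := \sosind(n)\chi(n)$. First, I would verify that $|a_n|\le 1$: this is immediate since $\sosind(n)\in\{0,1\}$ and $|\chi(n)|\le 1$ for any Dirichlet character modulo $q$. Consequently the abscissa of absolute convergence of $\sum_{n\ge 1} a_n/n^s$ is at most $1$, so the hypothesis of Corollary~\ref{cor:effbdd} is satisfied.

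Next, I would observe that, by definition, the associated Dirichlet series coincides with $F(s,\chi)$:
\[ \sum_{n \ge 1} \frac{a_n}{n^s} \;=\; \sum_{n \in S} \frac{\chi(n)}{n^s} \;=\; F(s,\chi) \qquad (\Re s >1). \]
Substituting this identification and the bound $|a_n|\le 1$ into the conclusion of Corollary~\ref{cor:effbdd} produces \eqref{eq:applicableperron} verbatim, with the same contour $\Re s = 1+1/\log x$, $|\Im s|\le T$, and the same error term $O(1+x\log(xT)/T)$.

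There is essentially no obstacle here; the corollary is a direct specialization of Corollary~\ref{cor:effbdd}, which itself was obtained by reading off Lemma~\ref{lem:effective0} and estimating the three ranges $n\le x/2$, $n\ge 2x$, $n\in(x/2,2x)$ that contribute to the Perron error. The only point worth flagging for later use is that the error term $O(1+x\log(xT)/T)$ will need to be balanced against the contour integral: choosing $T$ as a modest power of $x$ (for instance $T\asymp x^{1/2}$) reduces the Perron error to a power saving, leaving the integral over $[1+1/\log x - iT, 1+1/\log x + iT]$ as the object to be analyzed by shifting the contour into $A_\chi$ through the truncated Hankel loops around the singularities of $F(s,\chi)$ described in \S\ref{sec:generating}.
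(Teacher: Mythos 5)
Your proof is correct and is exactly the paper's argument: the corollary is stated there as an immediate specialization of Corollary~\ref{cor:effbdd} to $a_n=\sosind(n)\chi(n)$, using $|a_n|\le 1$ and the identification of the Dirichlet series with $F(s,\chi)$.
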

	\begin{remark}\label{rem:modif}
		Since $|F(s,\chi)| \le \sum_{n \ge 1} n^{-1-1/\log x} \ll \log x$ when $\Re s = 1+1/\log x$, we see that perturbing the parameter $T$ (appearing in the range of integration) by $O(1)$ incurs an error of $O(x \log x/T)$ which is absorbed in the existing error term.
	\end{remark}	
	
	\subsection{Analytic continuation}\label{sec:analyticcont}
	Given a Dirichlet series $G(s)$ associated with a multiplicative function $g(n)$, which converges absolutely for $\Re s > 1$ and does not vanish there, we define the $k$th root of $G$ as
	\[ G^{1/k}(s)=\exp\left( \frac{\log G(s)}{k}\right)\]
	for each positive integer $k$, where the logarithm is chosen so that $\arg G(s) \to 0$ as $s \to \infty$.
	The function $G^{1/k}$ is also a Dirichlet series, since
	\[ G^{1/k}(s) = \prod_{p} \exp\left( \frac{\log \sum_{i \ge 0} \frac{g(p^i)}{p^{is}}}{k}\right). \]
	\begin{lem}\label{lem:analytic}
		Let $\chi$ be a Dirichlet character modulo $q$. We have, for $\Re s > 1$,
		\[ F(s,\chi) = \sqrt{L(s,\chi)L(s,\chi  \quadratic)} \sqrt[4]{\frac{L(2s,\chi^2)}{L(2s,\chi^2 \quadratic)}} G(s,\chi)\]
		for $G$ which is analytic and nonvanishing in $\Re s > 1/4$ and bounded in $\Re s \ge 1/4 +\varepsilon$.
		If $\chi$ is real then
		\begin{equation}\label{eq:Ghalf} G\left( \frac{1}{2},\chi\right)= \left(1-\frac{\chi(2)}{\sqrt{2}}\right)^{-1/2}\left(1-\frac{\mathbf{1}_{2 \nmid q}}{2}\right)^{1/4}\prod_{\substack{p\equiv 3 \bmod 4\\ p \nmid q}} \left(1-\frac{1}{p^2}\right)^{-1/4}.
		\end{equation}
	\end{lem}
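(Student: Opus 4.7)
The plan is to establish the identity locally at each prime via Euler products. In $\Re s > 1$ all of $L(s,\chi)$, $L(s,\chi\quadratic)$, $L(2s,\chi^2)$ and $L(2s,\chi^2\quadratic)$ are nonvanishing, so the square and fourth roots on the right are well-defined via the principal branch of the logarithm, and both sides admit Euler products. The identity in $\Re s>1$ will then follow by matching local factors, and the resulting Euler product for $G$ will extend it analytically to $\Re s > 1/4$.

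I would split into cases according to the residue of $p \bmod 4$, using the characterization \eqref{eq:characterization} of $\sosind$ on prime powers. For $p \nmid q$ with $p \equiv 1 \bmod 4$, every power of $p$ lies in $S$, so the local factor of $F(s,\chi)$ is $(1-\chi(p)p^{-s})^{-1}$; since $\quadratic(p)=1$ the two $L$-functions of argument $s$ give equal factors and the ratio at argument $2s$ is $1$, forcing $G_p=1$. For $p\equiv 3 \bmod 4$, $p \nmid q$, only even powers of $p$ lie in $S$, so the local factor of $F(s,\chi)$ is $(1-\chi^2(p)p^{-2s})^{-1}$; using $\quadratic(p)=-1$ together with $(1-X)(1+X)=1-X^2$, a direct computation gives
\[ G_p(s,\chi) = (1-\chi^4(p)p^{-4s})^{-1/4}. \]
For $p=2$ with $2\nmid q$ we have $\quadratic(2)=0$, and the analogous matching yields
\[ G_2(s,\chi) = (1-\chi(2)2^{-s})^{-1/4}(1+\chi(2)2^{-s})^{1/4}. \]
For primes dividing $q$ all relevant Euler factors on both sides equal $1$, so $G_p=1$.

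These pointwise formulas define $G(s,\chi)$ as an Euler product which converges absolutely for $\Re s > 1/4$. Indeed $G_2$ is analytic and non-vanishing for $\Re s > 0$, while the product over $p\equiv 3\bmod 4$, $p\nmid q$ has
\[ \log G_p(s,\chi) = \tfrac{1}{4}\chi^4(p)p^{-4s}+O(p^{-8\sigma}), \]
and $\sum_p p^{-4\sigma}$ converges uniformly for $\Re s\ge 1/4+\varepsilon$. This simultaneously yields analyticity, non-vanishing, and boundedness of $G$ on the required regions, and in particular extends $F(s,\chi)$ past the natural boundary $\Re s=1/2$ of \eqref{eq:landform}.

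To evaluate $G(1/2,\chi)$ for real $\chi$, note $\chi^4(p)=\mathbf{1}_{p\nmid q}$, so the product over $p\equiv 3\bmod 4$ simplifies to $\prod_{p\equiv 3\bmod 4,\,p\nmid q}(1-p^{-2})^{-1/4}$. For the factor at $2$ with $2\nmid q$, the identity $(1-\chi(2)/\sqrt{2})(1+\chi(2)/\sqrt{2}) = 1-\chi(2)^2/2 = 1/2$ rewrites $G_2(1/2,\chi)$ as $(1-\chi(2)/\sqrt{2})^{-1/2}(1/2)^{1/4}$, while if $2\mid q$ then $G_2\equiv 1$; either way one recovers \eqref{eq:Ghalf}. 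The only mildly subtle point is getting the exponent at $p \equiv 3 \bmod 4$ down to $(1-\chi^4(p)p^{-4s})^{-1/4}$ rather than to a factor with only $p^{-2s}$ decay, as this is precisely what pushes the region of absolute convergence of the residual Euler product from $\Re s>1/2$ down to the larger region $\Re s>1/4$ needed for the contour-shifting argument in \S\ref{sec:analyticcont}.
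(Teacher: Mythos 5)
Your proof is correct and follows essentially the same route as the paper: the paper likewise verifies the identity prime-by-prime, writing $\log G$ as a sum of local terms $g_p(s,\chi)$ (which exponentiate exactly to your local factors $(1-\chi^4(p)p^{-4s})^{-1/4}$ at $p\equiv 3\bmod 4$ and $(1-\chi(2)2^{-s})^{-1/4}(1+\chi(2)2^{-s})^{1/4}$ at $p=2$), and deduces analyticity, non-vanishing and boundedness in $\Re s>1/4$ from the same Taylor expansion $g_p=\tfrac{1}{4}\chi^4(p)p^{-4s}+O(p^{-6c})$ and uniform convergence of $\sum_p p^{-4\sigma}$. Your local computations and the evaluation at $s=1/2$ all check out against \eqref{eq:Ghalf}.
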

	\begin{proof}
		We have, for $\Re s > 1$,
		\begin{align} \log G(s,\chi) &= \log F(s,\chi) - \frac{\log L(s,\chi)}{2} - \frac{\log L(s,\chi \quadratic)}{2} + \frac{\log L(2s,\chi^2 \quadratic)}{4} - \frac{\log L(2s,\chi^2)}{4} \\
			\label{eq:logGdef}&= \frac{-\log(1-\chi(2)2^{-s})}{2} + \frac{\log(1-\chi^2(2)2^{-2s})}{4} + \sum_{p \equiv 3 \bmod 4} g_p(s,\chi)
		\end{align}
		for
		\[ g_p(s,\chi) = \frac{-\log(1-\chi^2(p)p^{-2s})}{2} + \frac{\log(1-\chi^2(p)p^{-2s})-\log(1+\chi^2(p)p^{-2s})}{4}.\]
		Each $g_p$ is analytic in $\Re s > 0$. Fix $c>0$. For $\Re s \ge c$ we have
		\[ g_p(s,\chi) = \frac{\chi^4(p)}{4}p^{-4s} + O_c(p^{-6c}) = O_c(p^{-4c})\]
		by Taylor expanding $\log(1+x)$. In particular, if $c=1/4+\varepsilon$ we have
		\[ \left|\sum_{p \equiv 3 \bmod 4} g_p(s,\chi)\right| \ll \sum_{p \equiv 3 \bmod 4} p^{-4c} \ll \sum_{n \ge 1} n^{-1-4\varepsilon} < \infty \]
		and so $G$ may be extended to $\Re s > 1/4$ via \eqref{eq:logGdef}.
		As each $g_p$ is analytic in $\Re s > 1/4$, and $\sum_{p \equiv 3 \bmod 4} g_p(s,\chi)$ converges uniformly in $\Re s \ge 1/4+\varepsilon$ for every choice of $\varepsilon>0$, it follows that $\log G$ is analytic in $\Re s > 1/4$ and so is $G$. The formula for $G(1/2,\chi)$ for real $\chi$ follows from evaluating $g_p$ at $s=1/2$ and observing $\chi^2 =\chi_0$.
	\end{proof}
	Assuming GRH for $L(s,\chi)$ where $\chi$ is nonprincipal, $L(s,\chi)^{1/k}$ may be analytically continued to the region
	\begin{equation}\label{eq:analyticreg} \CC \setminus (\{ \sigma+it :  L(1/2+it,\chi)=0 \text{ and }\sigma \le 1/2 \} \cup \{ \sigma: \sigma \le -1\})
	\end{equation}
	and is nonzero there (because of trivial zeros of $L(s,\chi)$ we have to remove $\{\sigma:\sigma\le -1\}$). For $\chi$ principal we have a singularity at $s=1$ and so 
	$L(s,\chi)^{1/k}$ may be analytically continued to 
	\begin{equation}\label{eq:analyticreg2} \CC \setminus \left(\{ \sigma+it :  L(1/2+it,\chi)=0 \text{ and }\sigma \le 1/2 \} \cup \{ \sigma: \sigma \le 1\}\right).
	\end{equation}
	Hence, given $\chi$ which is nonprincipal and such that $\chi \quadratic$ is nonprincipal, we have the following.
	Under GRH for $\chi$, $\chi \quadratic$, $\chi^2$ and $\chi^2 \quadratic$, we may continue $F(s,\chi)$ analytically to
	\begin{equation}\label{eq:analyticcomplex}\left\{ s \in \CC: \Re s > \frac{1}{4}\right\} \setminus \{ \sigma+it :  L(1/2+it,\chi)L(1/2+it,\chi  \quadratic)=0 \text{ and }\sigma \le 1/2 \}
	\end{equation}
	if both $\chi^2$ and $\chi^2 \quadratic$ are nonprincipal; otherwise we may continue it to  
	\begin{equation}\label{eq:analyticreal}\left\{ s \in \CC: \Re s > \frac{1}{4}\right\} \setminus \left(\{ \sigma+it :  L(1/2+it,\chi)L(1/2+it,\chi  \quadratic)=0 \text{ and }\sigma \le 1/2 \} \cup \{ \sigma: \sigma \le 1/2\}\right).
	\end{equation}
	\subsection{$L$-function estimates}
	We quote three classical bounds on $L$-functions from the book of Montgomery and Vaughan \cite{Montgomery2007}.
	\begin{lem}\label{lem:upper}\cite[Thm.~13.18 and Ex.~8 at \S13.2.1]{Montgomery2007}
		Let $\chi$ be a Dirichlet character. Under GRH for $L(s,\chi)$, there exists a constant $A$ depending only on $\chi$ such that the following holds. Uniformly for $\sigma \ge 1/2$ and $|t| \ge 1$,
		\[ |L(s,\chi)| \le \exp\left( A \frac{\log (|t|+4)}{\log \log (|t|+4)}\right).\]
	\end{lem}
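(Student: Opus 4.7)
The plan is to establish this classical conditional bound of Littlewood type by reducing to an estimate on $\log L(s,\chi)$ in the strip $1/2 \le \sigma \le 2$, $|t|\ge 1$. For $\sigma \ge 1 + 1/\log|t|$ absolute convergence of the Dirichlet series immediately yields $|L(s,\chi)| \ll \log|t|$, much stronger than what is claimed; so only the strip $1/2 \le \sigma < 1+1/\log|t|$ needs attention, and by the functional equation one may also restrict to $\sigma \le 2$.

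In that strip, I would start from the Hadamard partial fraction expansion
\[
\frac{L'}{L}(s,\chi) = B(\chi) - \tfrac{1}{2}\log\frac{q}{\pi} - \tfrac{1}{2}\frac{\Gamma'}{\Gamma}\!\left(\frac{s+\mathfrak{a}}{2}\right) + \sum_{\rho}\left(\frac{1}{s-\rho}+\frac{1}{\rho}\right),
\]
where $\mathfrak{a}\in\{0,1\}$ records the parity of $\chi$ and $\rho$ runs over nontrivial zeros. Under GRH every $\rho = 1/2 + i\gamma$, and the Riemann–von Mangoldt density bound gives $\#\{\gamma : |\gamma - t|\le 1\} \ll \log|t|$. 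The gamma term is $O(\log|t|)$ by Stirling, and the sum over far-away zeros telescopes to $O(\log|t|)$ after pairing with the $1/\rho$ terms. Integrating $L'/L$ horizontally from $s_0 = 2+it$ back to $s$ therefore yields, trivially, $|\log L(s,\chi)| \ll (\log|t|)^2$.

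To sharpen this to $\log|t|/\log\log|t|$, I would invoke the standard Borel–Carathéodory trick, which is precisely the content of Thm.~13.18 and Ex.~8 in Montgomery–Vaughan. Consider $\log L(w,\chi)$ on the disk around $s_0 = 2+it$ of radius slightly less than $3/2$, so that the boundary reaches $\sigma = 1/2$. On the larger concentric disk of radius $3/2+\delta$ one has $\Re\log L(w,\chi)\le \log|L(w,\chi)| \ll \log|t|$ from the trivial $(\log|t|)^2$ bound just proved (actually any polynomial bound suffices). Borel–Carathéodory converts this upper bound on $\Re \log L$ into a bound on $|\log L|$ on the smaller disk; a careful choice of the two radii, exploiting the fact that on the larger boundary one has only logarithmic growth and that on the smaller boundary one already knows $\log L = O(1)$ near $s_0$, yields the saving of one factor of $\log|t|$, replaced by $\log\log|t|$.

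The principal obstacle is precisely this $\log\log$ improvement; a direct zero-by-zero estimate cannot avoid $(\log|t|)^2$ because $\log|t|$ zeros each contribute up to $\log|t|$ in $L'/L$. The Borel–Carathéodory maneuver (or, equivalently, Selberg's mollified formula for $\log L$) is what extracts the necessary cancellation, and it is the GRH assumption that lets the analytic disk used in that argument extend all the way down to $\sigma=1/2$ without encircling any zeros.
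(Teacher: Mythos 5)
The paper does not prove this lemma; it quotes it from Montgomery and Vaughan, so your task was to reconstruct the classical Littlewood-type argument. Your reduction to the strip $1/2\le\sigma\le 2$ and the preliminary observation $\Re\log L(s,\chi)=\log|L(s,\chi)|\ll\log(|t|+4)$ there (which already follows from the unconditional polynomial bound on $|L|$) are fine, but the crucial step --- extracting the factor $1/\log\log|t|$ --- does not work as you describe. First, $\log L(\cdot,\chi)$ is not analytic on the disk $|w-(2+it)|\le 3/2+\delta$: under GRH that disk crosses the critical line and therefore contains zeros of $L(s,\chi)$, i.e.\ logarithmic singularities of $\log L$, so Borel--Carath\'eodory cannot be applied on it; the largest admissible disk centred at $2+it$ has radius at most $3/2$. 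Second, and more fundamentally, the two-circle Borel--Carath\'eodory inequality gives
\[ |\log L(w,\chi)|\le \frac{2r}{R-r}\max_{|z-(2+it)|= R}\Re\log L(z,\chi)+\frac{R+r}{R-r}\,|\log L(2+it,\chi)|, \]
and with $\Re\log L\ll\log|t|$ this is $\ll\log|t|/\log\log|t|$ only if $r/(R-r)\ll 1/\log\log|t|$, which forces the inner disk to stay within distance $O(R/\log\log|t|)$ of the centre and hence nowhere near $\sigma=1/2$. No choice of two radii achieves both goals at once; the ``careful choice of the two radii'' you invoke does not exist.

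The genuine sources of the $\log\log$ saving are different. Littlewood's route uses Hadamard's \emph{three-circle} theorem, interpolating between a circle reaching only to $\sigma=1+1/\log\log|t|$ (where the Euler product gives $|\log L|\ll\log\log\log|t|$) and a circle on which $|\log L|\ll\log|t|$; this yields $|\log L(s,\chi)|\ll\log|t|/\log\log|t|$ only for $\sigma\ge 1/2+c/\log\log|t|$, after which a further step (the functional equation, which costs only a harmless factor $|t|^{O(1/\log\log|t|)}=\exp(O(\log|t|/\log\log|t|))$, or a Phragm\'en--Lindel\"of argument in the remaining thin strip) is needed to reach $\sigma=1/2$ itself --- and the lemma is claimed uniformly down to $\sigma=1/2$. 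The proof in Montgomery--Vaughan instead writes $\log L(s,\chi)$ as a weighted prime sum $\sum_{n\le x}\Lambda(n)\chi(n)n^{-s}(\log n)^{-1}w(n)$ plus a zero-sum error of size $O(\log|t|/\log x)$, and chooses $x=(\log|t|)^{2}$, so that the prime sum is $\ll x^{1/2}/\log x\ll\log|t|/\log\log|t|$. Either of these would complete your outline; as written, the argument has a gap exactly at the decisive step.
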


		\begin{lem}\label{lem:lower}\cite[Thm.~13.23]{Montgomery2007}
	Let $\chi$ be a Dirichlet character. Suppose $|t| \gg 1$. Under GRH for $L(s,\chi)$, there exists a constant $A$ depending only on $\chi$ such that the following holds. Uniformly for $\sigma \ge 1/2 + 1/\log \log (|t|+4)$ and $|t| \ge 1$,
	\[ \left| \frac{1}{L(s,\chi)}\right| \le \exp\left( A \frac{\log(|t|+4)}{\log \log (|t|+4)}\right).\]
\end{lem}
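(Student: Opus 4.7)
The plan is to follow the classical approach of Littlewood and Selberg (see \cite[Thm.~13.23]{Montgomery2007}). First, reduce to $\chi$ primitive: the ratio $L(s,\chi)/L(s,\chi^{*})$, with $\chi^{*}$ the primitive character inducing $\chi$, is the finite Euler product $\prod_{p\mid q,\, p\nmid \mathrm{cond}(\chi^{*})}(1-\chi^{*}(p)p^{-s})$, which is bounded and bounded away from $0$ on $\Re s \ge 1/2$ with bounds depending only on $q$. So we may assume $\chi$ primitive and absorb the local factors into the constant $A$.

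For primitive $\chi$, the completed $L$-function $\xi(s,\chi)$ admits a Hadamard factorization. Logarithmic differentiation and Stirling's formula yield the partial-fraction expansion
\[ \frac{L'}{L}(s,\chi) = \sum_{|\gamma - t|\le 1}\frac{1}{s-\rho} + O(\log q(|t|+2)) \]
in a bounded vertical strip, with $\rho = 1/2 + i\gamma$ running over non-trivial zeros (on the critical line by GRH). Integrating horizontally from $2+it$ (where $\log L = O(1)$) to $s = \sigma + it$ yields
\[ \log L(s,\chi) = O(\log q|t|) - \sum_{|\gamma-t|\le 1}\log\frac{3/2 + i(t-\gamma)}{(\sigma-1/2) + i(t-\gamma)}. \]
Taking the real part, bounding each logarithm by $O(\log\log\log|t|)$ via $\sigma-1/2 \ge 1/\log\log(|t|+4)$, and summing over the $O(\log q|t|)$ zeros in $[t-1,t+1]$ (Riemann--von Mangoldt) only gives $|\log L(s,\chi)| = O(\log q|t|\cdot \log\log\log|t|)$, which is too weak by a factor of roughly $(\log\log q|t|)(\log\log\log|t|)$.

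To achieve the sharp bound, I would employ Selberg's smoothed representation
\[ \log L(s,\chi) = \sum_{n\le X^2} \frac{\Lambda_X(n)\chi(n)}{n^s \log n} + R_X(s,\chi), \]
where $\Lambda_X$ is a smoothed von Mangoldt function supported on $n \le X^2$ and, under GRH, the remainder satisfies $R_X(s,\chi) = O((\log q|t|)/\log X)$. Choosing $X = \log(|t|+4)$ and $\sigma - 1/2 = 1/\log\log(|t|+4)$, both the Dirichlet polynomial (bounded trivially by $O(X^{1-\sigma}/\log X)$) and the remainder become $O(\log|t|/\log\log|t|)$. Exponentiating the real part yields the claimed bound on $|1/L(s,\chi)|$.

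The main obstacle is deriving Selberg's representation with a sharp enough remainder. This is a delicate contour-shift computation using an auxiliary kernel such as $(X^z - X^{2z})/z^2$, and it relies on the pointwise bound of Lemma~\ref{lem:upper} throughout a horizontal strip together with the local zero density $N(\chi,T+1)-N(\chi,T-1) = O(\log qT)$. The details are classical but technical.
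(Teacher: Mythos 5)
This lemma is quoted in the paper directly from Montgomery--Vaughan (Thm.~13.23) with no proof given, apart from the remark after the lemmas reducing to primitive characters via the finite Euler product $L(s,\chi)/L(s,\psi)$ --- exactly the reduction you make. Your sketch is the standard Littlewood--Selberg argument underlying the cited theorem (correctly diagnosing why the bare Hadamard/partial-fraction bound loses a factor $\log\log\log|t|$, and fixing it with the smoothed representation at $X\asymp\log(|t|+4)$, $\sigma-\tfrac12\ge 1/\log\log(|t|+4)$), so it is consistent with the paper's treatment; the only caveat is that you defer the derivation of the remainder bound $R_X$ to the classical literature rather than carrying it out.
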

	\begin{lem}\label{lem:log}\cite[Cor.~13.16 and Ex.~6(c) at \S13.2.1]{Montgomery2007}
		Let $\sigma \in (1/2,1)$ be fixed. Let $\chi$ be a primitive Dirichlet character. We have, as $|t| \to \infty$, \[|\log L(s,\chi)| \ll_{\sigma} \frac{(\log(|t|+4) )^{2-2\sigma}}{\log \log (|t|+4)}.\]
	\end{lem}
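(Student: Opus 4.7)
The plan is to establish this bound under GRH for $L(s,\chi)$ (the standing hypothesis of the surrounding lemmas), following the classical approach of Littlewood in the case of the Riemann zeta function. Under GRH, $L(s,\chi)$ is zero-free in $\Re s > 1/2$, so $\log L(s,\chi)$ has a holomorphic branch there, with the logarithmic singularity at $s=1$ handled separately when $\chi$ is principal (which is irrelevant as $|t|\to\infty$). I write $T = |t| + 4$ throughout.

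The argument rests on two boundary estimates. At $s_0 = 1 + 1/\log T + it$, direct termwise estimation of the absolutely convergent expansion $\log L(s_0, \chi) = \sum_p \sum_{k \ge 1} \chi(p)^k/(k p^{k s_0})$ gives $|\log L(s_0, \chi)| \ll \log \log T$. On the closed half-plane $\Re s \ge 1/2$, the one-sided upper bound $\Re \log L(s,\chi) = \log|L(s,\chi)| \ll \log T/\log \log T$ follows from Lemma~\ref{lem:upper}; the matching lower bound on $\Re \log L$ holds for $\Re s \ge 1/2 + 1/\log\log T$ by Lemma~\ref{lem:lower}.

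To convert the one-sided bound on $\Re \log L$ into a two-sided bound on $|\log L|$, I would apply the Borel-Carath\'eodory theorem to $f(s) = \log L(s, \chi) - \log L(s_0, \chi)$ on a disk centered at $s_0$ of radius slightly less than $1/2$, keeping it inside the domain of analyticity $\Re s > 1/2$. A single application gives the crude bound $|\log L(\sigma + it, \chi)| \ll_\sigma \log T/\log \log T$ on any concentric inner circle passing through $\sigma + it$.

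The main obstacle is sharpening the exponent from $1$ to $2-2\sigma$. The mechanism, following Littlewood, is that the target bound is the logarithmic-convexity interpolation between the good estimate $\log\log T$ at $\sigma = 1$ and the crude estimate $\log T/\log \log T$ at $\sigma = 1/2$. Executing this interpolation---either by iterating Borel-Carath\'eodory with radii scaling in $\log T$, or by invoking Hadamard's three-circles theorem with carefully calibrated geometry---extracts the linear-in-$\sigma$ exponent $2-2\sigma$ on $\log T$, up to the $\log\log T$ divisor. Calibrating the radii precisely and verifying the growth conditions required by the convexity theorem on the boundary of each disk (in particular, avoiding the singularities of $\log L$ at GRH zeros on $\Re s = 1/2$) is the technical heart of the argument, as worked out in \cite[Ch.~13]{Montgomery2007}.
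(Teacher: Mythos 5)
The paper does not prove this lemma at all --- it is quoted verbatim from Montgomery--Vaughan (Cor.~13.16 and Ex.~6(c) of \S13.2.1), so there is no internal proof to compare against; I am judging your sketch on its own. Your first stage is sound: the bound $\ll\log\log T$ at $\sigma_0=1+1/\log T$ and a single Borel--Carath\'eodory application (using only the one-sided bound of Lemma~\ref{lem:upper} on $\Re\log L$) do give $|\log L(\sigma+it,\chi)|\ll_\sigma \log T/\log\log T$ for fixed $\sigma>1/2$, and you correctly supply the GRH hypothesis that the lemma statement leaves implicit. It is worth noting that this crude bound is already enough for the only place the paper invokes the lemma (bounding $\log L(1-2c+2it,\chi^2)$ inside a fourth root in Lemma~\ref{lem:perronshifted}, where anything of size $o(\log T/\log\log T)$ would do).

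The gap is in the interpolation step, and it is precisely the part you wave at. Three circles centred at $\sigma_1+it$ with radii $r_1<r_2<r_3$ give the exponent $a=\log(r_2/r_1)/\log(r_3/r_1)$, which by concavity of the logarithm is \emph{at least} $2-2\sigma$ for any fixed centre; one only recovers $a\to 2-2\sigma$ in the degenerate limit $\sigma_1\to\infty$ (three lines). Even granting that limit, the convexity bound is $M_1^{2\sigma-1}M_3^{2-2\sigma}$, and with $M_3\ll\log T/\log\log T$ this yields $(\log T)^{2-2\sigma}(\log\log T)^{-(2-2\sigma)}$ --- strictly weaker than the claimed $(\log T)^{2-2\sigma}(\log\log T)^{-1}$ for every $\sigma>1/2$. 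So the phrase ``up to the $\log\log T$ divisor'' conceals exactly what convexity cannot deliver; Littlewood's original three-circles argument (Titchmarsh, Thm.~14.2) only gives $(\log t)^{2-2\sigma+\varepsilon}$. The cited proof in Montgomery--Vaughan is not a convexity argument: it represents $\log L(s,\chi)$ as a weighted prime sum $\sum_{n\le x}\Lambda(n)\chi(n)w(n)/(n^s\log n)$ plus an error $\ll x^{1/2-\sigma}(\log\tau)/\log x$ controlled by GRH, and chooses $x=(\log\tau)^2$, whereupon both the main term ($\ll x^{1-\sigma}/((1-\sigma)\log x)$) and the error are $\ll_\sigma(\log\tau)^{2-2\sigma}/\log\log\tau$; that is where both the exponent $2-2\sigma$ and the $\log\log$ saving actually come from. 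If you want to avoid that machinery, you should either prove only the $\varepsilon$-weaker bound (which suffices for this paper) or acknowledge that the sharp form requires the hybrid explicit formula rather than Borel--Carath\'eodory plus Hadamard.
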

	The following is a consequence of the functional equation.
\begin{lem}\label{lem:fe}\cite[Cor.~10.10]{Montgomery2007}
	Let $\chi$ be a Dirichlet character and $\varepsilon \in (0,1)$. We have $|L(s,\chi)| \asymp |L(1-s,\overline{\chi})| (|t|+4)^{1/2-\sigma}$ uniformly for $\varepsilon \le \sigma \le 1/2$ and $|t|\ge 1$, where the implied constants depend only on $\chi$ and $\varepsilon$.
\end{lem}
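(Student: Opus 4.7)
The plan is to deduce this from the asymmetric functional equation for Dirichlet L-functions. First I would reduce to the primitive case: if $\chi$ is induced from a primitive character $\chi^{\ast}$ of conductor $q^{\ast}\mid q$, then
\[
L(s,\chi)=L(s,\chi^{\ast})\prod_{p\mid q,\,p\nmid q^{\ast}}\bigl(1-\chi^{\ast}(p)p^{-s}\bigr),
\]
and the finite Euler product is bounded above and below by constants depending only on $\chi$ and $\varepsilon$ in the strip $\varepsilon\le\sigma\le 1$ (its zeros all lie on $\Re s = 0$). An analogous factorization holds for $L(1-s,\overline{\chi})$, where in the range $0\le\Re(1-s)\le 1-\varepsilon$ that finite Euler product is again bounded above and below by a constant depending on $\chi$ and $\varepsilon$. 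So it suffices to prove the claim for $L(s,\chi^{\ast})$.

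Next, I would invoke the functional equation for the primitive character $\chi^{\ast}$ of conductor $q^{\ast}$: setting $a\in\{0,1\}$ according to the parity of $\chi^{\ast}$,
\[
L(s,\chi^{\ast})=\varepsilon_{\chi^{\ast}}\,\bigl(q^{\ast}/\pi\bigr)^{1/2-s}\,\frac{\Gamma\!\bigl((1-s+a)/2\bigr)}{\Gamma\!\bigl((s+a)/2\bigr)}\,L(1-s,\overline{\chi^{\ast}}),
\]
with $|\varepsilon_{\chi^{\ast}}|=1$. Taking absolute values gives
\[
|L(s,\chi^{\ast})|=\bigl(q^{\ast}/\pi\bigr)^{1/2-\sigma}\left|\frac{\Gamma\!\bigl((1-s+a)/2\bigr)}{\Gamma\!\bigl((s+a)/2\bigr)}\right||L(1-s,\overline{\chi^{\ast}})|.
\]
Since $q^{\ast}$ depends only on $\chi$, the prefactor is $\asymp 1$ in our strip.

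The main technical step is to show the Gamma ratio is $\asymp (|t|+4)^{1/2-\sigma}$ uniformly for $\varepsilon\le\sigma\le 1/2$ and $|t|\ge 1$. This follows from Stirling's formula in the form
\[
\log\Gamma(z)=(z-\tfrac{1}{2})\log z - z + O(1)
\]
valid for $\Re z\ge \varepsilon/2$ and $|\Im z|\ge 1/2$, applied to $z=(s+a)/2$ and $z=(1-s+a)/2$. Subtracting the two expansions, the imaginary parts of $z\log z$ cancel out in the real part (up to $O(1)$), and the real parts combine to give $(1/2-\sigma)\log(|t|/2)$ plus bounded terms; exponentiating yields the desired $\asymp (|t|+4)^{1/2-\sigma}$.

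The only delicate point in this last step is ensuring uniformity all the way down to $\sigma=\varepsilon$ (rather than just in a thin strip near $1/2$), but the remainder bounds in Stirling are uniform on any half-plane $\Re z\ge \varepsilon/2$ away from the real axis, so this is a matter of bookkeeping rather than a real obstacle. Assembling the three ingredients (reduction to primitive, the functional equation identity, and Stirling for the Gamma ratio) completes the argument.
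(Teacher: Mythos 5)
Your proposal is correct and matches the paper's treatment: the paper simply cites \cite[Cor.~10.10]{Montgomery2007} for primitive characters and handles imprimitive ones by exactly the finite-Euler-product reduction you give, while your functional-equation-plus-Stirling argument is the standard proof of the cited result. (One cosmetic slip: the induced Euler product attached to $L(1-s,\overline{\chi})$ is bounded below only for $\Re(1-s)$ bounded away from $0$, not on all of $0\le\Re(1-s)\le 1-\varepsilon$; since here $\Re(1-s)=1-\sigma\ge 1/2$, this is harmless.)
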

These four lemmas are originally stated for primitive characters. However, if $\chi$ is induced from a primitive character $\psi$, then in $\Re s >0$ the ratio $L(s,\chi)/L(s,\psi)$ is equal to the finite Euler product $\prod_{p:\, \chi(p)=0}(1-\psi(p)/p^s)$. This product is bounded away from $0$ and from $\infty$ when $\Re s \ge \varepsilon$, so we can convert results for $L(s,\psi)$ to results for $L(s,\chi)$ as long as we restrict our attention to $\sigma \ge \varepsilon$. 
	\subsection{Contour choice}\label{sec:contour}
	Let $\chi$ be a nonprincipal Dirichlet character modulo $q$. Fix $c \in (0,1/8)$ (say, $c=1/10$). Let $T \gg 1$. 
	
	We want to use Cauchy's Integral Theorem to shift the vertical contour appearing in \eqref{eq:applicableperron} to the left of $\Re s = 1/2$ (namely to $\Re s = 1/2 - c$), at the `cost' of certain horizontal contributions. As we want to avoid zeros of $L(s,\chi)L(s,\chi \quadratic)$ and poles and zeros of $L(2s,\chi^2)/L(2s,\chi^2\quadratic)$ (which by GRH can only occur at $s=1/2$), we will use (truncated) Hankel loop contours to go around the relevant zeros and poles; the integrals over these loops will be the main contribution to our sum. It will also be convenient for $1/2- iT$ and $1/2+iT$ to avoid zeros of $L(s,\chi)L(s,\chi\quadratic)$; this is easy due to Remark~\ref{rem:modif}, showing that changing $T$ by $O(1)$ does not increase the error term arising from applying Perron's (truncated) formula. We replace the range $[1+1/\log x - iT,1+1/\log x+iT]$ with $[1+1/\log x-iT'', 1+1/\log x + i T']$ where $T-1 \le T',T'' \le T$ and \[L\left(\frac{1}{2}+it,\chi\right)L\left(\frac{1}{2}+it,\chi \quadratic\right) \neq 0\]
	for every $t \in [T',T) \cup (-T,-T'']$.	
	Let \begin{equation}\label{eq:zeros}
		\gamma_1<\gamma_2<\ldots < \gamma_m
	\end{equation}
	be the imaginary parts of the zeros of  $L(s,\chi)L(s,\chi \quadratic)$ on $\sigma = 1/2$ with $t\in (-T,T)$ (without multiplicities), and, if either $\chi^2$ or $\chi^2 \quadratic$ is principal, we include the number $0$ (if it is not there already). Let $r \in (0,1)$ be a parameter that will tend to $0$ later. Consider the contour
	\begin{equation}\label{eq:contour} I_1 \cup \bigcup_{j=1}^{m} (J_j \cup \mathcal{H}_{1/2+i\gamma_j}) \cup J_{m+1} \cup I_2
	\end{equation}
	where $I_1$ traverses the horizontal segment
	\[ I_1 = \left\{ \sigma - i T'' : \frac{1}{2}-c \le \sigma \le 1+\frac{1}{\log x}\right\}\]
	from right to left, $I_2$ traverses the horizontal segment
	\[ I_2 = \left\{ \sigma + i T' : \frac{1}{2}-c \le \sigma \le 1+\frac{1}{\log x}\right\}\]
	from left to right, $J_j$ traverses the following vertical segment from its bottom point to the top:
	\[ J_j = \left\{ \frac{1}{2}-c +it:  \gamma_{j-1} \le t \le \gamma_{j} \right\}\]
	where 
	\[ \gamma_0 := -T'', \qquad \gamma_{m+1}:=  T',\]
	and finally each $\mathcal{H}_{\rho}$ traverses the following truncated Hankel loop contour in an anticlockwise fashion:
	\begin{multline}\label{eq:hankel} 
	\left\{ s \in \CC: \frac{1}{2}-c \le \Re s \le 	\frac{1}{2}-r, \, \Im s =\Im \rho, \, \arg(s-\rho)=-\pi\right\}  \\ \cup \left\{ s\in \CC: |s-\rho|=r, \, -\pi < \arg(s-\rho)<\pi\right\}  \cup \left\{ \frac{1}{2}-c \le \Re s \le \frac{1}{2}-r, \Im s= \Im \rho, \, \arg(s-\rho)=\pi \right\}
	\end{multline}
	where in our case $c=1/10$ and $r=o(1/\log x)$.	We refer the reader to Tenenbaum \cite[pp.~179--180]{Tenenbaum2015} for background on the Hankel contour and its truncated version.
	If $r$ is small enough, the contour in \eqref{eq:contour} does not intersect itself. 
		
	If both $\chi^2$ and $\chi^2 \quadratic$ are nonprincipal characters and the corresponding $L$-functions satisfy GRH
	observe $\sqrt[4]{L(2s,\chi^2)/L(2s,\chi^2\quadratic)}$ is analytic in $\Re s > 1/2 - 2c > 1/4$ and so is $F(s,\chi)$ by Lemma~\ref{lem:analytic}. 
	
	If $\chi^2$ is principal then $\chi^2  \quadratic$ cannot be principal. Similarly, if $\chi^2  \quadratic$ is principal then $\chi$ is a nonreal Dirichlet character of order $4$ and $\chi^2$ cannot be principal. In both cases, $\sqrt[4]{L(2s,\chi^2)/L(2s,\chi^2\quadratic)}$ has an algebraic singularity at $s=1/2$, which we avoid already as we inserted $0$ to the list \eqref{eq:zeros} if it is not there already. 
	 
	 In any case, by Cauchy's Integral Theorem, 
	 \begin{equation}\label{eq:cauchy}
	 	\frac{1}{2\pi i}\int_{1+\frac{1}{\log x}-iT''}^{1+\frac{1}{\log x}+iT'} F(s,\chi)x^s \frac{ds}{s} = \frac{1}{2\pi i}\left( \int_{I_1} + \sum_{j=1}^{m}\left(\int_{J_j} + \int_{\mathcal{H}_{1/2+i\gamma_j}}\right) + \int_{J_{m+1}} + \int_{I_2} \right) F(s,\chi)x^s \frac{ds}{s}.
	 \end{equation} 
 	\begin{lem}\label{lem:perronshifted}
		Let $\chi$ be a nonprincipal Dirichlet character. Assume GRH holds for the following four characters:
		\begin{equation}\label{eq:fourchars}
		\chi, \qquad \chi \quadratic,\qquad  \chi^2, \qquad \chi^2 \quadratic.
	\end{equation}
		Let $c \in (0,1/8)$ be a fixed constant. Let $T \gg 1$. We have
		\begin{multline} \sum_{\substack{n \le x \\ n \in S}} \chi(n) = \sum_{j=1}^{m} \frac{1}{2\pi i}\int_{\mathcal{H}_{1/2+i\gamma_j}} F(s,\chi)x^s \frac{ds}{s} \\+ O\left( 1 + \frac{x\log x}{T}+\frac{(x+\max\{x,T\}^{1/2}+x^{\frac{1}{2}-c}T^{c+1})\exp\left( \frac{A \log T}{\log \log T}\right)}{T}\right).
		\end{multline}
The implied constant and $A$ depend only on $\chi$ and $c$.
	\end{lem}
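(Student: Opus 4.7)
The plan is to start from the truncated Perron formula of Corollary~\ref{cor:perrons} and then apply the contour-shift identity \eqref{eq:cauchy} to express the vertical line integral as the sum of Hankel-loop contributions (which become the main terms) plus the integrals along the horizontal segments $I_1, I_2$ and the vertical segments $J_1,\ldots,J_{m+1}$. I would first perturb the truncation heights by $O(1)$ via Remark~\ref{rem:modif}, obtaining $T',T''\in[T-1,T]$ such that the endpoints $1/2+iT'$ and $1/2-iT''$ avoid zeros of $L(\cdot,\chi)L(\cdot,\chi\quadratic)$; this incurs a cost $O(x\log x/T)$ already subsumed by the Perron error. The remaining task is to bound the three contour contributions.

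For the horizontal segments $I_1, I_2$, I would split the $\sigma$-range $[1/2-c,\, 1+1/\log x]$ at $\sigma=1/2$ and estimate $F(s,\chi)$ via Lemma~\ref{lem:analytic}. On the upper part $\sigma\in[1/2, 1+1/\log x]$, Lemma~\ref{lem:upper} bounds $L(s,\chi)$ and $L(s,\chi\quadratic)$ by $\exp(A\log T/\log\log T)$, Lemma~\ref{lem:lower} handles $1/L(2s,\chi^2\quadratic)$ (with $\Re(2s)\ge 1$ comfortably inside its domain), and Lemma~\ref{lem:analytic} gives $G(s,\chi)=O(1)$; these yield $|F(s,\chi)|\ll\exp(A\log T/\log\log T)$, contributing $\ll x\exp(A\log T/\log\log T)/T$. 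On the lower part $\sigma\in[1/2-c,1/2]$, Lemma~\ref{lem:fe} reflects $L(s,\chi)$ and $L(s,\chi\quadratic)$ to $\Re(1-s)\in[1/2,1/2+c]$ at the cost of a factor $(|t|+4)^{1/2-\sigma}$, while the fourth-root factor remains controlled because $\Re(2s)=1-2c>1/2+1/\log\log T$ sits safely in the range of Lemmas~\ref{lem:upper}--\ref{lem:lower}. Thus $|F(s,\chi)|\ll(|t|+4)^{1/2-\sigma}\exp(A\log T/\log\log T)$, and evaluating
\[ T^{-1}\int_{1/2-c}^{1/2}x^\sigma T^{1/2-\sigma}\,d\sigma = T^{-1/2}\int_{1/2-c}^{1/2}(x/T)^\sigma\,d\sigma \]
by splitting into cases $x\ge T$ and $x<T$ gives a contribution bounded by $\max\{x,T\}^{1/2}\exp(A\log T/\log\log T)/T$.

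For the vertical segments $J_j$ at $\sigma=1/2-c$, the same combination of Lemma~\ref{lem:fe} with Lemmas~\ref{lem:upper}--\ref{lem:lower} (uniformly in $t$, with compactness bounds filling in small $|t|$) gives $|F(1/2-c+it,\chi)|\ll(|t|+4)^c\exp(A\log(|t|+4)/\log\log(|t|+4))$; since the $o(1/\log x)$ Hankel radii are negligible, the $J_j$'s essentially cover $[-T,T]$, and integrating $x^{1/2-c}(|t|+4)^c/|1/2-c+it|$ against $dt$ yields a contribution of order $x^{1/2-c}T^c\exp(A\log T/\log\log T)=x^{1/2-c}T^{c+1}\exp(A\log T/\log\log T)/T$. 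Adding the three contour contributions to the Perron error $O(1+x\log(xT)/T)$ produces the stated bound. The main obstacle is the uniform L-function bookkeeping on the shifted contour, especially the fourth-root factor $\sqrt[4]{L(2s,\chi^2)/L(2s,\chi^2\quadratic)}$: the constraint $c<1/8$ is imposed precisely so that $\Re(2s)>1/2$ with headroom, and the case where $\chi^2$ or $\chi^2\quadratic$ is principal is already absorbed by appending $0$ to the list \eqref{eq:zeros}, ensuring the algebraic singularity at $s=1/2$ is surrounded by a Hankel loop and Cauchy's theorem applies on the simply connected complement.
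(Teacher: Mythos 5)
Your proposal is correct and follows essentially the same route as the paper: Perron plus Remark~\ref{rem:modif}, Cauchy's theorem via \eqref{eq:cauchy}, the split of the horizontal segments at $\sigma=1/2$ with Lemma~\ref{lem:fe} reflecting $L(s,\chi)$ and $L(s,\chi\quadratic)$ on the left half, and the dyadic bound $|F(1/2-c+it,\chi)|\ll(|t|+4)^{c}\exp(A\log(|t|+4)/\log\log(|t|+4))$ on the vertical segments. The only (immaterial) deviation is that on the vertical line you control the factor $\sqrt[4]{L(2s,\chi^2)/L(2s,\chi^2\quadratic)}$ via Lemmas~\ref{lem:upper}--\ref{lem:lower} at $\Re(2s)=1-2c>3/4$, where the paper invokes Lemma~\ref{lem:log}; both give the same $\exp(A\log T/\log\log T)$ loss.
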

	\begin{proof}
By \eqref{eq:applicableperron}, Remark~\ref{rem:modif} and 
		\eqref{eq:cauchy}, it suffices to upper bound $\int_{I_j}|F(s,\chi)| |x^s| |ds|/|s|$ and $\int_{J_j}|F(s,\chi)| |x^s| |ds|/|s|$. We first treat $I_j$, and concentrate on $I_2$ as the argument for $I_1$ is analogous. We have, using Lemma~\ref{lem:analytic},
		\begin{align} \int_{I_2}|F(s,\chi)| |x^s| \frac{|ds|}{|s|} &\ll \frac{1}{T}\int_{\frac{1}{2}-c}^{1+\frac{1}{\log x}} \sqrt{|L(\sigma+iT',\chi)L(\sigma+iT', \chi \quadratic)|} \sqrt[4]{\frac{|L(2\sigma+2iT',\chi^2)|}{|L(2\sigma+2iT',\chi^2\quadratic)|}} x^{\sigma}d\sigma\\
			&\ll \frac{1}{T}\int_{\frac{1}{2}-c}^{1+\frac{1}{\log x}} \sqrt{|L(\sigma+iT',\chi)L(\sigma+iT', \chi \quadratic)|} \sqrt[4]{\frac{|L(2\sigma+2iT',\chi^2)|}{|L(2\sigma+2iT',\chi^2\quadratic)|}} x^{\sigma}d\sigma.
		\end{align}
		It is now convenient to consider $\sigma \ge 1/2$ and $\sigma \le 1/2$ separately.
		
		If $\sigma \ge 1/2$ we bound all the relevant $L$-functions using Lemmas~\ref{lem:upper} and \ref{lem:lower}, obtaining that this part of the integral contributes
		\[ \ll \frac{\exp\left( \frac{A \log T}{\log \log T}\right)}{T} \int_{\frac{1}{2}}^{1+\frac{1}{\log x}} x^{\sigma} d\sigma \ll \frac{x\exp\left( \frac{A \log T}{\log \log T}\right)}{T}\]
		where $A$ is a constant large enough depending on $\chi$.
		For $\sigma$ below $1/2$ we first apply Lemma~\ref{lem:fe} to the $L$-functions of $\chi$ and $\chi \quadratic$ to reduce to the situation where the real parts of the variables inside the $L$-functions are $\ge 1/2$. Then we apply Lemmas~\ref{lem:upper} and \ref{lem:lower} as before, obtaining that this part of the integral contributes
		\[ \ll \frac{\exp\left( \frac{A \log T}{\log \log T}\right)}{T} \int_{\frac{1}{2}-c}^{\frac{1}{2}} T^{1/2-\sigma}x^{\sigma} d\sigma \ll \frac{\max\{x,T\}^{1/2}\exp\left( \frac{A \log T}{\log \log T}\right)}{T}.\]
		It follows that
		\[ \left(\int_{I_1} + \int_{I_2}\right)|F(s,\chi)| |x^s| \frac{|ds|}{|s|} \ll \frac{(x+\max\{x,T\}^{1/2})\exp\left( \frac{A \log T}{\log \log T}\right)}{T}. \]
		We turn to the contribution of $J_j$. We have
		\begin{align} \sum_{j=1}^{m+1} \int_{J_j} |F(s,\chi)| |x^s| \frac{|ds|}{|s|} &\ll x^{\frac{1}{2}-c}\int_{-T-1}^{T+1} \left| F\left( \frac{1}{2}-c+it,\chi\right)\right| \frac{dt}{|t|+1}\\ 
			& \ll x^{\frac{1}{2}-c}\left( 1 + \sum_{2^k \le 2T}2^{-k} \left(\int_{2^{k-1}}^{2^{k}}+\int_{-2^{k}}^{-2^{k-1}}\right) \left| F\left( \frac{1}{2}-c+it,\chi\right)\right|dt\right)
		\end{align}
	where
	\begin{align} \left| F\left( \frac{1}{2}-c+it,\chi\right)\right| &= \sqrt{\left|L\left(\frac{1}{2}-c+it,\chi\right)L\left(\frac{1}{2}-c+it, \chi \quadratic\right)\right|} \sqrt[4]{\frac{|L(1-2c+2it,\chi^2)|}{|L(1-2c+2it,\chi^2\quadratic)|}} \\
		&\ll  (|t|+4)^{c} \exp\left( A \frac{\log(|t|+4)}{\log \log (|t|+4)}\right) \ll T^{c} \exp\left(  \frac{A\log T}{\log \log T}\right),
	\end{align}
where we used Lemmas~\ref{lem:upper} and \ref{lem:fe} to bound the $L$-functions of $\chi$ and $\chi \quadratic$ and Lemma~\ref{lem:log} for the other two $L$-functions. This leads to 
\[ \sum_{j=1}^{m+1} \int_{J_j} |F(s,\chi)| |x^s| \frac{|ds|}{|s|}  \ll x^{\frac{1}{2}-c} T^{c} \exp\left( \frac{A\log T}{\log \log T}\right)\]
and concludes the proof.
\end{proof}

\section{Hankel calculus}
In this section, $\mathcal{H}_{\rho}$ is the Hankel contour described in \eqref{eq:hankel}, going around $\rho$ in an anticlockwise fashion.
\begin{lem}\label{lem:ratio}
	Let $\chi$ be a Dirichlet character. Assume GRH for $L(s,\chi)$. Given a nontrivial zero $\rho=1/2+i\gamma$ of $L(s,\chi)$ we have
	\[ \max_{s \in \mathcal{H}_{\rho}} \left| \frac{L(s,\chi)}{s-\rho}\right| \ll (|\gamma|+1)^{c+o(1)}.\]
	Here the $o(1)$ exponent goes to $0$ as $\gamma$ goes to $\infty$ (and it might depend on $\chi$), and the implied constant is absolute.
\end{lem}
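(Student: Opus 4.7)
My plan is to apply the maximum modulus principle to
\[ h(s) := \frac{L(s,\chi)}{s-\rho}, \]
using the fact that $L(\rho,\chi) = 0$ cancels the simple pole of $1/(s-\rho)$: thus $h$ is entire when $\chi$ is non-principal, and analytic on $\CC \setminus \{1\}$ when $\chi$ is principal. The essential geometric observation is that the Hankel contour $\mathcal{H}_{\rho}$ is contained in the closed disk $\overline{B_c(\rho)}$: on each horizontal segment $|s-\rho| = \tfrac{1}{2} - \Re s \in [r, c]$, while on the small circle $|s-\rho| = r < c$.

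Fix $R = c + \epsilon$ for a small $\epsilon > 0$; for $|\gamma|$ larger than a constant depending only on $c$, the closed disk $\overline{B_R(\rho)}$ avoids $s=1$ and $h$ is analytic in an open neighborhood of it. The maximum modulus principle then gives
\[ \max_{s \in \mathcal{H}_{\rho}} |h(s)| \le \max_{|w-\rho| = R} |h(w)| = \frac{1}{R} \max_{|w-\rho|=R} |L(w,\chi)|. \]
To estimate $|L(w,\chi)|$ on this circle I would split according to the sign of $\Re w - \tfrac{1}{2}$. When $\Re w \ge \tfrac{1}{2}$, Lemma~\ref{lem:upper} gives $|L(w,\chi)| \le \exp(A \log(|\gamma|+4)/\log\log(|\gamma|+4)) = (|\gamma|+1)^{o(1)}$. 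When $\Re w < \tfrac{1}{2}$, the functional equation (Lemma~\ref{lem:fe}) gives $|L(w,\chi)| \ll |L(1-w,\overline{\chi})|(|\Im w|+4)^{1/2-\Re w}$, and a further application of Lemma~\ref{lem:upper} to $L(1-w,\overline{\chi})$ (whose argument now satisfies $\Re(1-w) \ge \tfrac{1}{2}$) bounds the first factor by $(|\gamma|+1)^{o(1)}$; the exponent $\tfrac{1}{2} - \Re w$ is maximized at the leftmost point of the circle, where it equals $R$. Combining both ranges, $\max_{|w-\rho|=R} |L(w,\chi)| \ll (|\gamma|+1)^{R + o(1)}$, and hence $\max_{s \in \mathcal{H}_{\rho}} |h(s)| \ll (|\gamma|+1)^{c + \epsilon + o(1)}/R$ with an absolute implied constant.

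Finally, letting $\epsilon = \epsilon(|\gamma|) \to 0$ slowly as $|\gamma| \to \infty$ absorbs the extra $\epsilon$ into the $o(1)$ exponent, yielding the claimed bound $(|\gamma|+1)^{c+o(1)}$. For bounded $|\gamma|$ the estimate is trivial, since the Hankel contour stays in a compact region on which $h$ is continuous. The step I expect to require the most care is precisely the choice of the auxiliary radius: taking $R$ too large would spoil the exponent $c$ (the functional-equation factor $(|t|+4)^{1/2-\Re w}$ would inflate to $(|\gamma|+1)^{R}$), while taking $R$ exactly equal to $c$ would place the boundary circle through points of the Hankel contour, so one must take $R = c + \epsilon$ and then optimize. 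No assumption on the multiplicity of $\rho$ is needed, since $h$ remains analytic at $\rho$ regardless.
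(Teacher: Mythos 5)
Your argument is correct, and it takes a genuinely different (if closely related) route from the paper's. The paper converts the vanishing $L(\rho,\chi)=0$ into control of $L(s,\chi)/(s-\rho)$ by writing $L(s,\chi)=\int_{\rho}^{s}L'(z,\chi)\,dz$, so that the quantity to bound is at most $\max|L'|$ on the segment joining $\rho$ to $s$; it then bounds $L'$ via Cauchy's integral formula over a circle of radius $1/\log(|\gamma|+1)$, feeding in Lemmas~\ref{lem:upper} and~\ref{lem:fe}. You instead observe that $h(s)=L(s,\chi)/(s-\rho)$ has a removable singularity at $\rho$ and that $\mathcal{H}_{\rho}$ sits inside the closed disk of radius $c$ about $\rho$, so the maximum modulus principle reduces everything to bounding $|L|$ on a circle of radius $R=c+\epsilon$, using exactly the same two lemmas. (In fact you could take $R=c$ outright, since the maximum over the closed disk is still attained on its boundary, and the disk always avoids $s=1$ because $|1-\rho|\ge 1/2>c$; your extra $\epsilon$ and the restriction to large $|\gamma|$ for avoiding the pole are harmless over-caution.) Your approach buys a cleaner argument that never touches $L'$; the paper's approach yields the slightly sharper pointwise exponent $\max\{0,\tfrac12-\Re s\}+o(1)$, which degrades to $c+o(1)$ only at the left ends of the horizontal segments, though this refinement is never used. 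Both proofs share the same minor loose ends (constants from Lemma~\ref{lem:fe} depending on $\chi$ absorbed into the $o(1)$, the case of small $|\gamma|$ handled by compactness), so I see no gap.
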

\begin{proof}
Since we can write $L(s,\chi)$ as $L(\rho,\chi)$ plus an integral of $L'(z,\chi)$ over a line segment connecting $s$ and $\rho$, it follows that the maximum we try to bound is
\[\ll \max_{\substack{|s-\rho| \le r, \text{ or }\\ s=\sigma+i\gamma \text{ with}\\\frac{1}{2}-c \le \sigma \le \frac{1}{2}}} |L'(s,\chi)|. \]
By Cauchy's integral formula, Lemma~\ref{lem:upper} and Lemma~\ref{lem:fe},
\[ |L'(s,\chi)| \ll \int_{|z-s|=\frac{1}{\log (|\gamma|+1)}} \frac{|L(z,\chi)|}{|z-s|}|dz| \ll (|\gamma|+1)^{\max\{0,\frac{1}{2}-\Re s\} + o(1)},\]
implying the desired bound. 
\end{proof}

\begin{lem}\label{lem:hankelabsolute}
Let $\chi$ be a nonprincipal Dirichlet character. Assume GRH for the characters in \eqref{eq:fourchars}. Given a nontrivial zero $\rho=1/2+i\gamma \neq 1/2$ of $L(s,\chi)L(s,\chi \quadratic)$ we have
\[\int_{\mathcal{H}_{\rho}} |F(s,\chi)||x^s| |ds| \ll \sqrt{x} (|\gamma|+1)^{c+o(1)}\left( (\log x)^{-3/2} +  r^{3/2} x^r \right).\]
Here the $o(1)$ exponent goes to $0$ as $\gamma$ goes to $\infty$ (and might depend on $\chi$), and the implied constant depends only on $\chi$ (it is independent of $r$).
\end{lem}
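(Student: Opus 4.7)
The plan is to use the factorization supplied by Lemma~\ref{lem:analytic},
\[ F(s,\chi) = \sqrt{L(s,\chi)L(s,\chi\quadratic)}\,\sqrt[4]{L(2s,\chi^2)/L(2s,\chi^2\quadratic)}\, G(s,\chi), \]
and bound each factor separately on $\mathcal{H}_\rho$. Because $\rho \neq 1/2$, the Hankel contour stays bounded away from the singularity of the quartic-root factor at $s=1/2$, so the entire contribution comes from the vanishing of the square-root factor at $\rho$. After the pointwise bound is in place, the lemma reduces to a routine Hankel-type integral, split into the two horizontal rays and the small circle.

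The first step is to establish that for every $s \in \mathcal{H}_\rho$,
\[ |F(s,\chi)| \ll |s-\rho|^{1/2}\,(|\gamma|+1)^{c+o(1)}. \]
Since $\rho$ is a zero of the product $L(s,\chi)L(s,\chi\quadratic)$, at least one of $L(s,\chi)$ or $L(s,\chi\quadratic)$ vanishes there. I apply Lemma~\ref{lem:ratio} to the vanishing L-function(s), while the non-vanishing factor (if any) is bounded by Lemma~\ref{lem:upper} combined with Lemma~\ref{lem:fe}; this yields the $|s-\rho|^{1/2}(|\gamma|+1)^{c+o(1)}$ contribution. For the quartic-root factor, note that for $s \in \mathcal{H}_\rho$ we have $\Re(2s) \in [1-2c,\,1+2r]$, which is bounded away from $1/2$ since $c < 1/8$. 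Lemma~\ref{lem:log} (together with GRH for $L(s,\chi^2)$ and $L(s,\chi^2\quadratic)$ giving a matching lower bound via exponentiation of $\log L$) shows the quartic-root factor is $(|\gamma|+1)^{o(1)}$. Finally, $G(s,\chi)$ is bounded on the region by Lemma~\ref{lem:analytic}.

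Next I split the Hankel contour into the two horizontal segments and the circle of radius $r$. On the horizontal pieces I parametrize $s = 1/2+u+i\gamma$ for $u \in [-c,-r]$, so $|s-\rho|=|u|$ and $|x^s|=x^{1/2+u}$, giving
\[ \int_{-c}^{-r} |u|^{1/2}x^{1/2+u}\,du = \sqrt{x}(\log x)^{-3/2}\int_{r\log x}^{c\log x} v^{1/2}e^{-v}\,dv \ll \sqrt{x}(\log x)^{-3/2}, \]
since $r\log x = o(1)$ and $\int_0^\infty v^{1/2}e^{-v}\,dv = \Gamma(3/2) < \infty$. On the circle, the length is $2\pi r$, $|s-\rho|=r$, and $|x^s|\le x^{1/2+r}$, so the integrand is $\ll r^{1/2}(|\gamma|+1)^{c+o(1)}x^{1/2+r}$ and the contribution is $\ll r^{3/2}\sqrt{x}\,x^r(|\gamma|+1)^{c+o(1)}$. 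Summing the three pieces produces the claimed bound.

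The main technical point is really the first step: ensuring that at a zero of the product (possibly a multiple zero, possibly a common zero of both L-functions) the natural $|s-\rho|^{1/2}$ vanishing of $\sqrt{L(s,\chi)L(s,\chi\quadratic)}$ survives uniformly, with $\gamma$-dependence only through the factor $(|\gamma|+1)^{c+o(1)}$. This follows from Lemma~\ref{lem:ratio} applied to each L-function in turn, noting that a multiplicity $m\ge 1$ zero only strengthens the bound via the factor $|s-\rho|^m$ inside each L-function, and the product $(m_1+m_2)/2 \ge 1/2$ matches the $|s-\rho|^{1/2}$ needed. The rest of the argument is the standard Hankel calculation, where the key feature is that with $r=o(1/\log x)$ the horizontal part of the integral is effectively a complete Gamma integral and hence independent of $r$.
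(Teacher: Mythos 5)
Your proposal is correct and follows essentially the same route as the paper: bound $|F(s,\chi)|$ pointwise on $\mathcal{H}_\rho$ by $|s-\rho|^{1/2}(|\gamma|+1)^{c+o(1)}$ using Lemma~\ref{lem:ratio} for the vanishing factor and Lemmas~\ref{lem:upper}, \ref{lem:lower}, \ref{lem:fe} for the remaining factors, then evaluate the circle and horizontal pieces of the Hankel contour separately, with the horizontal part reducing to a convergent Gamma-type integral. The paper phrases the extraction of $|s-\rho|^{1/2}$ as writing $L(s,\chi)L(s,\chi\quadratic)$ as $\bigl(L(s,\chi)L(s,\chi\quadratic)/(s-\rho)\bigr)\cdot(s-\rho)$ before taking the square root, which is the same idea as your first step.
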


\begin{proof}
We have, for $s$ on the contour,
\[ |F(s,\chi)| \ll \sqrt{|L(s,\chi)L(s,\chi \quadratic)|} (|\gamma|+1)^{o(1)} \]
by Lemmas~\ref{lem:analytic}, \ref{lem:upper} and \ref{lem:lower}.
Integrating $\sqrt{|L(s,\chi)L(s,\chi \quadratic)|}|x^{s}|$ over the circle part of the contour contributes
\[ \ll rx^{\frac{1}{2}+r}\int_{-\pi}^{\pi} \sqrt{\left|L\left(\rho+re^{i\theta},\chi\right)L\left(\rho+re^{i\theta},\chi\quadratic\right)\right|}d\theta.\]
Writing $L(s,\chi)L(s,\chi \quadratic)$ as $L(s,\chi)L(s,\chi \quadratic)/(s-\rho)$ times $s-\rho$ and appealing to Lemmas~\ref{lem:ratio}, \ref{lem:upper} and \ref{lem:fe}, we find that this is
\[ \ll rx^{\frac{1}{2}+r} (|\gamma|+1)^{c+o(1)}\int_{-\pi}^{\pi} \sqrt{|re^{i\theta}|}d\theta \ll r^{3/2}x^{\frac{1}{2}+r} (|\gamma|+1)^{c+o(1)}.\]

Integrating $\sqrt{|L(s,\chi)L(s,\chi \quadratic)|}|x^{s}|$ over one of the segment parts of the contour contributes
\[ \ll \sqrt{x}\int_{-c}^{0} \sqrt{\left|L\left(\rho+t,\chi\right)L\left(\rho+t,\chi\quadratic\right)\right|}x^{t}dt.\]
Again writing $L(s,\chi)L(s,\chi \quadratic)$ as $L(s,\chi)L(s,\chi \quadratic)/(s-\rho)$ times $s-\rho$ and appealing to Lemma~\ref{lem:ratio}, we can bound this contribution by
\begin{equation}
\ll \sqrt{x} (|\gamma|+1)^{c+o(1)} \int_{-\infty}^{0} \sqrt{|t|}x^t dt \ll \sqrt{x} (|\gamma|+1)^{c+o(1)} (\log x)^{-3/2},
\end{equation}
concluding the proof.
\end{proof}

\begin{lem}\label{lem:l2bound}
Let $\chi$ be a nonprincipal Dirichlet character. Assume GRH for the characters in \eqref{eq:fourchars}. For any pair $\rho_1=1/2+i\gamma_1$, $\rho_2=1/2+i\gamma_2$ of nontrivial zeros different from $1/2$ we have
\begin{equation} \frac{1}{X}\int_{X}^{2X} \int_{\mathcal{H}_{\rho_1}}\int_{\mathcal{H}_{\rho_2}} F(s_1,\chi) \overline{F(s_2,\chi)} \frac{x^{s_1}}{s_1}\frac{x^{\overline{s_2}}}{\overline{s_2}}ds_1 \overline{ds_2} dx\ll \frac{X \left( (\log X)^{-3/2} +  r^{3/2} X^r \right)^2}{((|\gamma_1|+1) (|\gamma_2|+1))^{1-c+o(1)}(1+|\gamma_1-\gamma_2|)}
\end{equation}
where implied constants depend only on $\chi$.
\end{lem}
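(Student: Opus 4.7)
The plan is to push the $x$-integration past the two contour integrals, since Fubini applies trivially (the contours $\mathcal{H}_{\rho_j}$ are compact, $F(s,\chi)$ is continuous there, and the $x$-range is bounded). After swapping, the inner integral becomes
\[ \frac{1}{X}\int_X^{2X} x^{s_1+\overline{s_2}} dx = X^{s_1+\overline{s_2}}\,\frac{2^{s_1+\overline{s_2}+1}-1}{s_1+\overline{s_2}+1}. \]
On the Hankel contours we have $\Re(s_1+\overline{s_2}) \in [1-2c,1+2r]$, hence $|2^{s_1+\overline{s_2}+1}-1| \ll 1$ and $|X^{s_1+\overline{s_2}}| = X^{\Re s_1+\Re s_2}$. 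Moreover $\Im(s_1+\overline{s_2})=\gamma_1-\gamma_2+O(r)$, and since $\Re(s_1+\overline{s_2}+1)\ge 2-2c$, one gets the key estimate $|s_1+\overline{s_2}+1| \asymp 1 + |\gamma_1-\gamma_2|$. This is where the off-diagonal saving of $1/(1+|\gamma_1-\gamma_2|)$ is produced — it is the only nontrivial input of the lemma.

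Taking absolute values and using $|s_j| \asymp |\gamma_j|+1$ for $s_j \in \mathcal{H}_{\rho_j}$ (legitimate since $\rho_j \ne 1/2$ and $r<1$), the triple integral is bounded by
\[ \ll \frac{1}{(|\gamma_1|+1)(|\gamma_2|+1)(1+|\gamma_1-\gamma_2|)}\prod_{j=1}^{2}\int_{\mathcal{H}_{\rho_j}} |F(s_j,\chi)|\, X^{\Re s_j}\,|ds_j|. \]

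Each factor in the product is now exactly the quantity estimated in Lemma~\ref{lem:hankelabsolute}, giving
\[ \int_{\mathcal{H}_{\rho_j}} |F(s_j,\chi)|\, X^{\Re s_j}\,|ds_j| \ll \sqrt{X}\,(|\gamma_j|+1)^{c+o(1)}\bigl((\log X)^{-3/2}+r^{3/2}X^r\bigr). \]
Multiplying the two factors and combining with the prefactor yields
\[ \ll \frac{X\bigl((\log X)^{-3/2}+r^{3/2}X^r\bigr)^2 \cdot ((|\gamma_1|+1)(|\gamma_2|+1))^{c+o(1)}}{(|\gamma_1|+1)(|\gamma_2|+1)(1+|\gamma_1-\gamma_2|)}, \]
which is exactly the claimed bound after rewriting the denominator as $((|\gamma_1|+1)(|\gamma_2|+1))^{1-c+o(1)}(1+|\gamma_1-\gamma_2|)$.

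The only conceptual step is swapping the order of integration to expose the oscillation in $x$; everything else is a mechanical application of the already-established pointwise Hankel estimate. There is no genuine obstacle, since the mean-value over $x \in [X,2X]$ is playing only the role of the classical Dirichlet-integral device that converts the off-diagonal denominator $|\Im(s_1-s_2)|$ into $1/(1+|\gamma_1-\gamma_2|)$.
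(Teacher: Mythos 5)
Your proof is correct and is essentially identical to the paper's: both integrate in $x$ first to produce the factor $|s_1+\overline{s_2}+1|^{-1}\gg$ nothing worse than $(1+|\gamma_1-\gamma_2|)^{-1}$, use $|s_j|\asymp|\gamma_j|+1$ on the loops, and then factor the double contour integral and apply Lemma~\ref{lem:hankelabsolute} to each piece. No issues.
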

\begin{proof}
We first integrate by the $x$-variable and then take absolute values, obtaining that the integral is 
\begin{align} &\ll  \int_{\mathcal{H}_{\rho_1}}\int_{\mathcal{H}_{\rho_2}} |F(s_1,\chi)| |F(s_2,\chi)| \frac{X^{\Re(s_1+s_2)}}{|s_1| |s_2| |s_1 + \overline{s_2}+1|}|ds_1| | ds_2| \\
&\ll \frac{1}{|\rho_1 \rho_2|(1+|\gamma_1-\gamma_2|)}\int_{\mathcal{H}_{\rho_1}}\int_{\mathcal{H}_{\rho_2}} |F(s_1,\chi)| |F(s_2,\chi)| X^{\Re(s_1+s_2)}|ds_1| | ds_2| \\
& = \frac{1}{|\rho_1 \rho_2|(1+|\gamma_1-\gamma_2|)}\left( \int_{\mathcal{H}_{\rho_1}}  |F(s_1,\chi)| X^{\Re(s_1)} |ds_1| \right) \left( \int_{\mathcal{H}_{\rho_2}}  |F(s_2,\chi)| X^{\Re(s_2)} |ds_2| \right) .
\end{align}
The result now follows from Lemma~\ref{lem:hankelabsolute}.
\end{proof}

\begin{lem}\label{lem:hankelhalf}
Let $\chi$ be a nonprincipal Dirichlet character. Assume GRH for the characters in \eqref{eq:fourchars}. If $\chi^2 \quadratic$ is principal, or if $\chi^2$ is principal as well as $L(1/2,\chi)L(1/2,\chi \quadratic)=0$ then 
	\[ \int_{\mathcal{H}_{1/2}} F(s,\chi)\frac{x^s}{s} ds \ll \sqrt{x} \left( x^r r^{5/4} + \frac{1}{(\log x)^{5/4}}\right).\]
The implied constant depends only on $\chi$ (it is independent of $r$).
\end{lem}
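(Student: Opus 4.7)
The plan is to show that under either hypothesis, the integrand $F(s,\chi) x^s/s$ satisfies $|F(s,\chi)| \ll |s-1/2|^{1/4}$ uniformly on $\mathcal{H}_{1/2}$ (with an implied constant depending only on $\chi$), and then integrate this bound directly. The shape of the stated estimate is exactly what one obtains by integrating $|s-1/2|^{1/4} x^{\Re s}$ over a truncated Hankel contour.

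First I would invoke Lemma~\ref{lem:analytic} to factor
\[ F(s,\chi) = \sqrt{L(s,\chi)L(s,\chi \quadratic)}\, \sqrt[4]{\frac{L(2s,\chi^2)}{L(2s,\chi^2 \quadratic)}}\, G(s,\chi), \]
where $G(s,\chi)$ is analytic and bounded near $s=1/2$. The analysis then splits into two cases. In the first case, $\chi^2 \quadratic$ is principal, which forces $\chi^2=\quadratic$; consequently $L(2s,\chi^2 \quadratic)$ has a simple pole at $s=1/2$ with nonzero residue, so $\sqrt[4]{1/L(2s,\chi^2 \quadratic)} = (s-1/2)^{1/4} h_0(s)$ for some $h_0$ analytic and nonzero in a neighborhood of $1/2$. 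Meanwhile $L(2s,\chi^2)=L(2s,\quadratic)$ is analytic and nonzero at $s=1/2$, so its fourth root is bounded; and since both $\chi$ and $\chi \quadratic$ are non-principal, $L(s,\chi)L(s,\chi \quadratic)$ is analytic at $s=1/2$, vanishing to some order $m \ge 0$, whence $\sqrt{L(s,\chi)L(s,\chi \quadratic)} = (s-1/2)^{m/2}\cdot(\text{bounded analytic})$. Altogether, near $s=1/2$, $|F(s,\chi)| \ll |s-1/2|^{m/2+1/4} \ll |s-1/2|^{1/4}$. In the second case, $\chi^2$ is principal (so $\chi$ is real and $\chi^2 \quadratic=\quadratic$ is non-principal), and by hypothesis $L(1/2,\chi)L(1/2,\chi \quadratic)=0$, so this product vanishes to some order $m\ge 1$; thus $\sqrt[4]{L(2s,\chi^2)}$ contributes $(s-1/2)^{-1/4}$ times a bounded nonzero factor, $\sqrt[4]{1/L(2s,\chi^2 \quadratic)}$ is bounded, and $\sqrt{L(s,\chi)L(s,\chi \quadratic)} = (s-1/2)^{m/2}\cdot(\text{bounded analytic})$, giving $|F(s,\chi)| \ll |s-1/2|^{m/2-1/4} \ll |s-1/2|^{1/4}$ since $m/2-1/4 \ge 1/4$. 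The branches of the fractional powers are consistent with those used in the analytic continuation of $F(s,\chi)$ to the region $A_\chi$ described in \S\ref{sec:analyticcont}.

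Away from $s=1/2$, the contour $\mathcal{H}_{1/2}$ is contained in a compact set on which $|\Im s| \le r \ll 1$, so all four relevant $L$-functions are bounded from above and below by constants depending on $\chi$ alone (no need to invoke Lemmas~\ref{lem:upper}--\ref{lem:fe} at large height here). Hence the bound $|F(s,\chi)| \ll |s-1/2|^{1/4}$ extends, with an enlarged implicit constant, to all of $\mathcal{H}_{1/2}$. Finally, $|s| \ge 1/2-c$ is bounded away from $0$ on the contour, so $1/|s| \ll 1$.

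It then remains to integrate. On the circular part, $|s-1/2|=r$, $|ds|=r\,d\theta$, and $|x^s|\le x^{1/2+r}$, contributing $\ll r \cdot r^{1/4} \cdot x^{1/2+r} = r^{5/4} x^{1/2+r}$. On each horizontal segment, parametrising $s=1/2-t$ for $t\in[r,c]$, we get
\[ \int_{r}^{c} t^{1/4} x^{1/2-t}\,dt \le x^{1/2}\int_0^\infty t^{1/4} e^{-t\log x}\,dt = \frac{\Gamma(5/4)\,x^{1/2}}{(\log x)^{5/4}}. \]
Summing these contributions yields $\ll \sqrt{x}\bigl(x^r r^{5/4} + (\log x)^{-5/4}\bigr)$, as claimed. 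There is no significant obstacle: the only care needed is in tracking the fractional-order behaviour of the two fourth roots at $s=1/2$ in each case, and in verifying that the exponent of $|s-1/2|$ in $|F(s,\chi)|$ is at least $1/4$ in every scenario covered by the hypothesis.
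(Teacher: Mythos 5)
Your proposal is correct and follows essentially the same route as the paper: in both cases one shows $|F(s,\chi)|\ll|s-1/2|^{1/4}$ on $\mathcal{H}_{1/2}$ by tracking the pole/zero orders of the four $L$-factors at $s=1/2$ (the paper phrases the second case via Lemma~\ref{lem:ratio} as $|F|\ll\sqrt{|s-\frac12|}\,|2s-1|^{-1/4}\le|2s-1|^{1/4}$), and then integrates $|s-1/2|^{1/4}x^{\Re s}$ over the circle and the two segments exactly as you do. Your accounting of the vanishing order $m$ and of the branch/boundedness issues is a slightly more detailed version of the same argument, so there is nothing to correct.
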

\begin{proof}
	This is a variation on Lemma~\ref{lem:hankelabsolute}. If $\chi^2 \quadratic$ is principal we have 
\[ |F(s,\chi)| \ll \sqrt[4]{|2s-1|}\]
on $\mathcal{H}_{1/2}$, where the implied constant depends on $\chi$ only. Similarly, if $\chi^2$ is principal and $L(1/2,\chi)L(1/2,\chi \quadratic)=0$ then
\[ |F(s,\chi)| \ll \sqrt{\left| s- \frac{1}{2}\right|} \sqrt[4]{\frac{1}{|2s-1|}}\le \sqrt[4]{|2s-1|}\]
by Lemma~\ref{lem:ratio}. In both cases the integral is 
\[ \ll \sqrt{x}\int_{\mathcal{H}_{1/2}} \sqrt[4]{\left| s- \frac{1}{2}\right|}x^{\max\{\Re s - \frac{1}{2},0\}} |ds|. \]
The contribution of $\Re s \ge 1/2$ is $\ll \sqrt{x}x^{r} r^{5/4}$, while the contribution of $\Re s \le 1/2$ is 
\[ \ll \sqrt{x} \int_{-\infty}^{0} \sqrt[4]{|t|}x^{t} dt \ll  \frac{\sqrt{x}}{(\log x)^{5/4}}, \]
concluding the proof. 
\end{proof}
Let $C_q$ be the following positive constant, depending only on $q$:
\begin{equation}\label{eq:cqdef} C_{q} = \frac{2\pi^{-1/4}}{\Gamma(1/4)} \prod_{p \equiv 3 \bmod 4} \left(1-\frac{1}{p^2}\right)^{-1/4} \prod_{\substack{p \mid q \\ p \equiv 3 \bmod 4}} \left(1 - \frac{1}{p}\right)^{1/2}.
\end{equation}
\begin{lem}\label{lem:asymphalf}
Let $\chi$ be a nonprincipal Dirichlet character modulo $q$. Assume GRH for the characters in \eqref{eq:fourchars}. If $\chi^2$ is principal and $L(1/2,\chi)L(1/2,\chi\quadratic)\neq 0$  then
	\begin{multline}\frac{1}{2\pi i}\int_{\mathcal{H}_{1/2}} F(s,\chi)\frac{x^s}{s} ds = C_{q}\frac{\sqrt{x}}{(\log x)^{3/4}} \left(1-\frac{\chi(2)}{\sqrt{2}}\right)^{-1/2}\sqrt{L\left(\frac{1}{2},\chi\right)L\left(\frac{1}{2},\chi \quadratic\right)} \left( 1+O\left(  x^{-c/2}\right)\right)
		 \\ + O\left( \sqrt{x}\left( x^r r^{7/4} + \frac{1}{(\log x)^{7/4}} \right) \right)
		\end{multline}
where the implied constants depend only on $\chi$.
\end{lem}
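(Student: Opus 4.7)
The plan is to extract the leading singular behaviour of $F(s,\chi)$ at the algebraic branch point $s=1/2$, integrate it against the Hankel contour via the classical Hankel identity $\tfrac{1}{2\pi i}\int_\mathcal{H} z^m e^z\,dz = 1/\Gamma(-m)$, and absorb the remainder with bounds of the flavour of Lemma~\ref{lem:hankelabsolute}. The starting point is the local expansion: since $\chi^2$ is principal modulo $q$, the factor $L(2s,\chi^2) = \zeta(2s)\prod_{p\mid q}(1-p^{-2s})$ has a simple pole at $s=1/2$ with residue $\tfrac{1}{2}\prod_{p\mid q}(1-1/p)$; the companion $\chi^2\quadratic$ is then non-principal (as noted in the excerpt preceding the lemma) and is induced from $\chi_{-4}$, giving
\[ L(1,\chi^2\quadratic) = \tfrac{\pi}{4}\prod_{\substack{p\mid q\\p\equiv 1\bmod 4}}(1-1/p)\prod_{\substack{p\mid q\\p\equiv 3\bmod 4}}(1+1/p)>0.\]
Combined with $L(1/2,\chi)L(1/2,\chi\quadratic)\neq 0$ and Lemma~\ref{lem:analytic}, this yields
\[ F(s,\chi) = C_\chi\,(s-\tfrac12)^{-1/4}\bigl(1+O(s-\tfrac12)\bigr) \qquad (s\to\tfrac12),\]
with the positive real branches, where
\[ C_\chi = \sqrt{L(\tfrac12,\chi)L(\tfrac12,\chi\quadratic)}\Bigl(\frac{\prod_{p\mid q}(1-1/p)}{2L(1,\chi^2\quadratic)}\Bigr)^{1/4}G(\tfrac12,\chi).\]
Using $1/s = 2+O(s-\tfrac12)$, it follows that $F(s,\chi)/s = 2C_\chi(s-\tfrac12)^{-1/4}\bigl(1 + O(s-\tfrac12)\bigr)$.

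Second, I would substitute $z = (s-\tfrac12)\log x$ to rescale $\mathcal{H}_{1/2}$ to a truncated Hankel contour in the $z$-plane with segments from $-c\log x$ to $-r\log x$ and small circle of radius $r\log x$; its value is independent of $r$ by analyticity and differs from the classical infinite Hankel integral only through the tail beyond $-c\log x$, of size $O((\log x)^{-1/4}x^{-c})$. Hence
\[ \tfrac{1}{2\pi i}\int_{\mathcal{H}_{1/2}} 2C_\chi(s-\tfrac12)^{-1/4}\,x^s\,ds = \frac{2C_\chi\sqrt{x}}{\Gamma(1/4)(\log x)^{3/4}}\bigl(1 + O(x^{-c/2})\bigr),\]
which, once $C_\chi$ is simplified as below, produces the claimed main term. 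For the $O((s-\tfrac12)^{3/4})$ integrand arising jointly from the two $O(s-\tfrac12)$ corrections above, the circular part of $\mathcal{H}_{1/2}$ contributes $\ll r\cdot r^{3/4}\cdot\sqrt{x}\,x^r = \sqrt{x}\,x^r r^{7/4}$, and each horizontal segment contributes $\ll \sqrt{x}\int_0^{\infty} t^{3/4}x^{-t}\,dt \ll \sqrt{x}(\log x)^{-7/4}$, matching the stated additive error.

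Finally I would verify algebraically that $2C_\chi/\Gamma(1/4)$ equals $C_q\,(1-\chi(2)/\sqrt{2})^{-1/2}\sqrt{L(1/2,\chi)L(1/2,\chi\quadratic)}$ with $C_q$ as in \eqref{eq:cqdef}. Substituting the formula for $G(1/2,\chi)$ from Lemma~\ref{lem:analytic} and the Euler product for $L(1,\chi^2\quadratic)$ above, and using $1-1/p^2=(1-1/p)(1+1/p)$ together with the observation $(1/2)^{\mathbf{1}_{2\mid q}}\bigl(1-\tfrac12\mathbf{1}_{2\nmid q}\bigr)=\tfrac12$ in both parities of $q$, the identity reduces after taking fourth powers to
\[ \frac{\bigl(1-\tfrac12\mathbf{1}_{2\nmid q}\bigr)\prod_{p\mid q}(1-1/p)}{2L(1,\chi^2\quadratic)} = \pi^{-1}\prod_{\substack{p\mid q\\p\equiv 3\bmod 4}}\frac{1-1/p}{1+1/p},\]
which follows by direct substitution. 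The main obstacle I expect is precisely this last bookkeeping step, since several Euler-product factors have to be combined correctly across the splits by residue mod $4$ and by parity of $q$; the analytic input (Hankel's identity and its routine tail estimate) is entirely standard.
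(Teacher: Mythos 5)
Your proposal is correct and follows essentially the same route as the paper: the paper factors $F(s,\chi)/s = M(s,\chi)(2s-1)^{-1/4}$ with $M$ analytic near $1/2$, splits off $M(1/2,\chi)$ times the pure Hankel integral (evaluated by the rescaling $(s-\tfrac12)\log x = y$ and Hankel's $\Gamma$-representation), and bounds the integral of $(2s-1)^{-1/4}(M(s)-M(1/2))x^s$ by the same circle-plus-segments argument giving $\sqrt{x}(x^r r^{7/4}+(\log x)^{-7/4})$, which is exactly your $(1+O(s-\tfrac12))$ decomposition. Your constant identification, including the Euler-product identity for $L(1,\chi^2\quadratic)$ and the parity bookkeeping, matches the paper's evaluation of $M(1/2,\chi)$ combined with \eqref{eq:Ghalf} and the definition \eqref{eq:cqdef} of $C_q$.
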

\begin{proof}
	Let $M(s,\chi) = \sqrt{L(s,\chi)L(s,\chi\quadratic)} \sqrt[4]{L(2s,\chi^2)(2s-1)/L(2s,\chi^2 \quadratic)}G(s,\chi)/s$ where $G$ is defined in Lemma~\ref{lem:analytic}. On $\mathcal{H}_{1/2}$ we have $F(s,\chi)/s=M(s,\chi)(2s-1)^{-1/4}$. We define $M$ at $s=1/2$ by its limit there, which exists as $L(s,\chi_0) = \zeta(s)\prod_{p\mid q}(1-p^{-s})$ has a simple pole at $s=1$. In fact, $M$ is analytic in a neighborhood of $s=1/2$ by our assumption on $\chi$ and $\quadratic$. We have
	\[ M\left(\frac{1}{2},\chi\right) = 2\sqrt{L\left(\frac{1}{2},\chi\right)L\left(\frac{1}{2},\chi \quadratic\right)} \prod_{p \mid q}\left(1-\frac{1}{p}\right)^{1/4}L(1,\chi_0 \quadratic)^{-1/4}G\left( \frac{1}{2},\chi\right). \]
	The expression $L(1,\chi_0\quadratic)$ may be simplified as $L(1,\quadratic) \prod_{p \mid q}(1-\quadratic(p)/p) = \prod_{p \mid q}(1-\quadratic(p)/p)\pi/ 4$.
Our integral is
\[\frac{1}{2\pi i}\left(M\left( \frac{1}{2},\chi \right) \int_{\mathcal{H}_{1/2}} (2s-1)^{-1/4}x^s ds + \int_{\mathcal{H}_{1/2}} (2s-1)^{-1/4}x^s \left( M(s,\chi)- M\left(\frac{1}{2},\chi\right)\right) ds\right).\]
The second integral here is small, namely $\ll\sqrt{x} \left( x^r r^{7/4} + (\log x)^{-7/4}\right)$, by an argument parallel to Lemma~\ref{lem:hankelhalf}. It suffices to show that
\[ \frac{1}{2\pi i}\int_{\mathcal{H}_{1/2}} \left(s-\frac{1}{2}\right)^{-1/4} x^s ds = \frac{1}{\Gamma(1/4)} \frac{\sqrt{x}}{(\log x)^{3/4}}  \left( 1+O\left(  x^{-c/2}\right)\right).\]
Making the change of variables $\left(s-\frac{1}{2}\right)\log x =y$, this boils down to Hankel's $\Gamma$-function representation, see e.g. \cite[Cor.~II.0.18]{Tenenbaum2015}. 
\end{proof}
\section{Proof of Theorem~\ref{thm:bias}}
\subsection{Character sum estimates}
\begin{proposition}\label{prop:charsums}
Let $\chi$ be a nonprincipal Dirichlet character modulo $q$. Assume GRH for the characters in \eqref{eq:fourchars}. If $\chi^2 \neq \chi_0$ we have
\[ \frac{1}{X} \int_{X}^{2X}\left|\sum_{\substack{n \le x\\ n \in S}} \chi(n)\right|^2 dx \ll  \frac{X}{(\log X)^3},\]
while if $\chi^2 =\chi_0$ we have
\[ \frac{1}{X} \int_{X}^{2X}\left|\sum_{\substack{n \le x\\ n \in S}} \chi(n) - C_q \frac{\sqrt{x}}{(\log x)^{3/4}} \left(1-\frac{\chi(2)}{\sqrt{2}}\right)^{-1/2}\sqrt{L\left(\frac{1}{2},\chi\right)L\left(\frac{1}{2},\chi\quadratic\right)}\right|^2 dx \ll \frac{X}{(\log X)^{5/2}},\]
where $C_q$ is defined in \eqref{eq:cqdef}. The implied constants depend only on $q$.
\end{proposition}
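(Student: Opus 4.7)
The proof proceeds via the shifted Perron formula of Lemma~\ref{lem:perronshifted} followed by an $L^2$-analysis of the resulting Hankel loop integrals. First I would apply Lemma~\ref{lem:perronshifted} with the calibrated choice $T = x^{1-\eta}$ where $\eta = 1/(2+2c)$, which balances the two dominant error terms $x\log(xT)/T \asymp x^\eta\log x$ and $x^{1/2-c}T^{c+1}/T = x^{1/2-c\eta+o(1)}$ at their common value. Squaring and averaging over $x \in [X,2X]$, the Perron-type error contributes $\ll X^{2\eta+o(1)} = X^{1/(1+c)+o(1)}$, a polynomial saving over $X$, hence negligible compared to any $X/(\log X)^K$.

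What remains is a finite Hankel sum, which I split as $f(1/2,\chi,x) + \sum_{\rho \ne 1/2} f(\rho,\chi,x)$, with $\rho$ running over imaginary parts of zeros of $L(s,\chi)L(s,\chi\quadratic)$ on $\Re s = 1/2$, $|\Im \rho| \le T$, and the term $f(1/2,\chi,x)$ being present only when $F(s,\chi)$ has a singularity at $s = 1/2$. For the $\rho \ne 1/2$ piece, I expand the modulus squared and apply Lemma~\ref{lem:l2bound} termwise, choosing $r = 1/\log X$ so that $r^{3/2} X^r \ll (\log X)^{-3/2}$. This leaves the double sum
\begin{equation*}
\sum_{\rho_1,\rho_2 \ne 1/2} \frac{1}{((|\gamma_1|+1)(|\gamma_2|+1))^{1-c-o(1)}(1+|\gamma_1-\gamma_2|)},
\end{equation*}
which converges absolutely. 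Indeed, using the Riemann--von Mangoldt density $\log(|\gamma|+1)$ of zeros and splitting the $\gamma_2$-range into $|\gamma_2| \ll \gamma_1$, $|\gamma_2| \asymp \gamma_1$, $|\gamma_2| \gg \gamma_1$, the inner sum is $\ll (|\gamma_1|+1)^{c-1}\log^2(|\gamma_1|+1)$, so the outer sum $\ll \sum_{\gamma_1} (|\gamma_1|+1)^{2c-2}\log^3(|\gamma_1|+1) < \infty$ because $c < 1/2$. Hence the $\rho \ne 1/2$ piece contributes $\ll X/(\log X)^3$, which already establishes the first bound and is well within the second.

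For the $s = 1/2$ contribution I split by case. If $\chi^2 = \chi_0$ and $L(1/2,\chi)L(1/2,\chi\quadratic) \ne 0$, Lemma~\ref{lem:asymphalf} with $r = 1/\log x$ extracts the main term $C_q \sqrt{x}(\log x)^{-3/4}(1-\chi(2)/\sqrt{2})^{-1/2}\sqrt{L(1/2,\chi)L(1/2,\chi\quadratic)}$ with remainder $\ll \sqrt{x}/(\log x)^{7/4}$, averaging to $\ll X/(\log X)^{7/2}$. If $\chi^2 = \chi_0$ but $L(1/2,\chi)L(1/2,\chi\quadratic) = 0$, Lemma~\ref{lem:hankelhalf} applies (since $\chi^2$ is principal) and yields $\ll \sqrt{x}/(\log x)^{5/4}$, contributing $\ll X/(\log X)^{5/2}$; the claimed main term vanishes through the $\sqrt{\,\cdot\,}$ factor. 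If $\chi^2 \ne \chi_0$, then either no singularity exists at $1/2$ (contribution zero) or the relevant hypotheses of Lemma~\ref{lem:hankelhalf} hold, and the same bound applies.

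\textbf{Main obstacle.} The most delicate point is the convergence of the double sum over pairs of zeros. Without the off-diagonal factor $1/(1+|\gamma_1-\gamma_2|)$ in Lemma~\ref{lem:l2bound} --- which arises by performing the $x$-integration first and exploiting the oscillation of $x^{i(\gamma_1-\gamma_2)}$ --- the sum would diverge like $\log^2 T$, destroying the logarithmic saving. Likewise, the Perron error in $L^2$ forces the non-obvious calibration $T = x^{1-1/(2+2c)}$: one must balance the two dominant error terms of Lemma~\ref{lem:perronshifted} against each other rather than minimizing either in isolation.
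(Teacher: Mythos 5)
Your proposal is correct and follows essentially the same route as the paper: truncated Perron via Lemma~\ref{lem:perronshifted} with a power-of-$x$ truncation, the split into the Hankel loop at $s=1/2$ versus the remaining loops, Lemma~\ref{lem:l2bound} together with convergence of the double sum over zeros for the latter, and Lemmas~\ref{lem:hankelhalf} and \ref{lem:asymphalf} for the former. The only cosmetic differences are your calibration $T=x^{1-1/(2+2c)}$ in place of the paper's $T=X^{3/4}$ (both yield a power saving) and your writing out explicitly the zero-sum convergence that the paper imports from Montgomery--Vaughan.
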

\begin{proof}
By Lemma~\ref{lem:perronshifted} with $T = X^{3/4} \asymp x^{3/4}$ and $c=1/10$ we have, uniformly for $x \in [X,2X]$,
\[\sum_{\substack{n \le x \\ n \in S}} \chi(n) = \sum_{j=1}^{m} \frac{1}{2\pi i}\int_{\mathcal{H}_{1/2+i\gamma_j}} F(s,\chi)x^s \frac{ds}{s}+ O\left( x^{\frac{1}{2}-\frac{1}{100}}\right)
\]
for any nonprincipal $\chi$. The function $F$ is defined in the first line of \S\ref{sec:prep} and is analytic in the set \eqref{eq:analyticreg} or in the set \eqref{eq:analyticreg2}, depending on $\chi$. The $m=m_{\chi}$ contours $\mathcal{H}_{1/2+i\gamma_j}=\mathcal{H}_{1/2+i\gamma_j,\chi}$ are defined in \S\ref{sec:contour}. They are Hankel loop contours going anticlockwise around zeros $1/2+i\gamma_j$ of $L(s,\chi)L(s,\chi\quadratic)$ up to height $T$ (exclusive), as well as around $s=1/2$ in case $\chi^2$ or $\chi^2 \quadratic$ is principal.
Let us write 
\[\sum_{\substack{n \le x \\ n \in S}} \chi(n) = S_1(x) + S_2(x) + O\left(x^{\frac{1}{2}-\frac{1}{100}}\right)\]
where 
$S_1(x)$ is the contribution of Hankel loops not going around $1/2$:
\[ S_1(x) =  \sum_{\substack{1 \le j \le m\\ \gamma_j \neq 0}} \frac{1}{2\pi i} \int_{\mathcal{H}_{1/2+i\gamma_j}} F(s,\chi) x^s \frac{ds}{s},\]
and $S_2$ is the contribution of the loop around $s=1/2$, in case such a loop exists:
\[ S_2(x) = \begin{cases}\frac{1}{2\pi i} \int_{\mathcal{H}_{1/2}} F(s,\chi) x^s \frac{ds}{s}& \text{if }L(1/2,\chi)L(1/2,\chi\quadratic)=0 \text{ or } \chi_0 \in \{\chi^2\quadratic, \chi^2\},\\0 & \text{otherwise.}\end{cases}\]
We shall take $r=o(1/\log X)$ in all the definitions of the loops. If $L(1/2,\chi)L(1/2,\chi \quadratic)=0$ or $\chi^2 \quadratic= \chi_0$ we have, by Lemma~\ref{lem:hankelhalf}, the pointwise bound
\[ S_{2}(x) \ll \frac{\sqrt{x}}{(\log x)^{5/4}}.\]
If $\chi^2=\chi_0$ and $L(1/2,\chi)L(1/2,\chi \quadratic)\neq 0$, we have by Lemma~\ref{lem:asymphalf} the following asymptotic relation:
\[ S_{2}(x)  = C_{q} \frac{\sqrt{x}}{(\log x)^{3/4}} \left(1-\frac{\chi(2)}{\sqrt{2}}\right)^{-1/2}\sqrt{L\left(\frac{1}{2},\chi\right)L\left(\frac{1}{2},\chi \quadratic\right)}  + O\left( \frac{\sqrt{x}}{(\log x)^{7/4}}\right).\]
In all cases,
\[ S_2(x) =  \mathbf{1}_{\chi^2 = \chi_0}C_q \frac{\sqrt{x}}{(\log x)^{3/4}} \left(1-\frac{\chi(2)}{\sqrt{2}}\right)^{-1/2}\sqrt{L\left(\frac{1}{2},\chi\right)L\left(\frac{1}{2},\chi\quadratic\right)} + O\left(\frac{\sqrt{x}}{(\log x)^{5/4}}\right).\]
It now suffices to show that 
$(1/X)\int_{X}^{2X} \left|S_1(x)\right|^2 dx \ll X/(\log X)^3$.
We have, by Lemma~\ref{lem:l2bound},
\begin{align}\label{eq:l2s1} \frac{1}{X} \int_{X}^{2X} \left|S_1(x)\right|^2 dx &\ll  \frac{1}{X} \int_{X}^{2X} \left| \sum_{\substack{1 \le j \le m \\ \gamma_j \neq 0}} \int_{\mathcal{H}_{1/2+i\gamma_j}} F(s,\chi) x^s \frac{ds}{s} \right|^2 \\
	&\ll \frac{X}{(\log X)^3}  \sum_{\substack{\gamma_1,\gamma_2 \neq 0:\\ L(1/2+i\gamma_j,\chi)=0\text{ or}\\L(1/2+i\gamma_j,\chi \quadratic)=0}} \frac{1}{|\gamma_1 \gamma_2|^{1-1/5}(1+|\gamma_1-\gamma_2|)}.
\end{align}
The sum over zeros converges by a standard argument, see \cite[Thm.~13.5]{Montgomery2007} where this is proved in the case of zeros of the Riemann zeta function. The only input needed is that between height $T$ and $T+1$ there are $\ll \log T$ zeros, which is true for any Dirichlet $L$-function, see \cite[Thm.~10.17]{Montgomery2007}.
\end{proof}

\subsection{Conclusion of proof}
Suppose $a,b$ satisfy $a\equiv b \equiv 1 \bmod (4,q)$ and $(a,q)=(b,q)=1$. Suppose the constant $C_{q,a,b}$ appearing in \eqref{eq:lincond} is positive. 
Consider $X \gg 1$ which will tend to $\infty$. By orthogonality of characters we write
\begin{equation}\label{eq:ortho}
	 S(x;q,a) - S(x;q,b) = \frac{1}{\phi(q)} \sum_{\chi_0 \neq \chi \bmod q} \overline{(\chi(a)-\chi(b))} \sum_{\substack{n \le x\\ n \in S}}\chi(n)
\end{equation}
for each $x \in [X,2X]$.
By Proposition~\ref{prop:charsums} and Cauchy-Schwarz, we can write
\begin{equation}\label{eq:finalexp} S(x;q,a)-S(x;q,b) = \frac{C_q}{\phi(q)} \frac{\sqrt{x}}{(\log x)^{3/4}} C_{q,a,b} + T(x)
\end{equation}
where
\[ \frac{1}{X} \int_{X}^{2X} \left| T(x)\right|^2 dx \ll \frac{X}{(\log X)^{5/2}}.\]
We see that in an $L^2$-sense, $T(x)$ is smaller (by a power of $\log x$) than the term of order $\sqrt{x}/(\log x)^{3/4}$ in \eqref{eq:finalexp}. To make this precise, we use Chebyshev's inequality:
\[ \mathbb{P}_{x \in [X,2X]}\left( |T(x)| \ge \frac{\sqrt{X}}{\Psi(X)(\log X)^{3/4}} \right) \ll \frac{\Psi^2(X)}{\log X} = o(1)\]
for any function $\Psi$ tending to $\infty$ slower than $(\log X)^{1/2}$. 
Here $x$ is a number chosen uniformly at random between $X$ and $2X$. It follows that $\mathbb{P}_{x \in [X,2X]}\left(S(x;q,a)>S(x;q,b)\right) \sim 1$ 
which finishes the proof. \qed

\section{Martin's conjecture}\label{sec:omega}
\subsection{Preparation}
Let $F_{\omega}(s,\chi) = \sum_{n \ge 1} \chi(n)\omega(n)/n^s$ for $\Re s >1$.
By Lemma~\ref{lem:effective0} with $\kappa = 1+1/\log x$ and $\omega(n)\ll \log n$,
\begin{equation}\label{eq:perronomega} \sum_{n \le x} \chi(n) \omega(n) = \frac{1}{2\pi i} \int_{1+\frac{1}{\log x}-iT}^{1+\frac{1}{\log x}+iT} F_{\omega}(s,\chi)x^s \frac{ds}{s} + O\left( \log x+ \frac{x \log^2 x}{T}\right)
\end{equation}
for all $T \gg 1$. We have $\omega = 1 * \mathbf{1}_{\text{Primes}}$. For $\Re s >1$ this identity leads to
\begin{equation}\label{eq:omega gen} F_{\omega}(s,\chi) = L(s,\chi) \left( \log L(s,\chi) - \frac{1}{2}\log L(2s,\chi^2) + G_{\omega}(s,\chi)\right)
\end{equation}
where $G_{\omega}$ may be analytically continued to $\Re s > 1/3$, and is bounded in $\Re s \ge 1/3 + \varepsilon$; see \cite[Eq.~(2.3)]{Meng2020}. If $\chi$ is nonprincipal, GRH for $\chi$ and $\chi^2$ implies that $F_{\omega}$ can be analytically continued to
	\begin{equation}\label{eq:fomega analytic}
	\left\{ s \in \CC: \Re s > \frac{1}{3}\right\} \setminus \{ \sigma+it :  L(1/2+it,\chi)=0 \text{ and }\sigma \le 1/2 \}
\end{equation}
	if $\chi^2$ is nonprincipal, and 
	\begin{equation}\label{eq:fomega analytic2}
\left\{ s \in \CC: \Re s > \frac{1}{3}\right\} \setminus \left(\{ \sigma+it :  L(1/2+it,\chi)=0 \text{ and }\sigma \le 1/2 \} \cup \{ \sigma : \sigma \le 1/2\}\right)
	\end{equation} 
	if $\chi^2$ is principal.
	Almost the same analysis applies for $F_{\Omega}(s,\chi) = \sum_{n \ge 1} \chi(n)\Omega(n)/n^s$, with the only change being the following variation on \eqref{eq:omega gen}: 
	\[ F_{\Omega}(s,\chi) = L(s,\chi) \left( \log L(s,\chi) + \frac{1}{2}\log L(2s,\chi^2) + G_{\Omega}(s,\chi)\right)\]
	where $G_{\Omega}$ may be analytically continued to $\Re s > 1/3$ and is bounded in $\Re s \ge 1/3+\varepsilon$.	This is a consequence of $\Omega = 1 * \mathbf{1}_{\text{Prime Powers}}$.
	The following lemma is essentially \cite[Lem.~3]{Meng2020}, and its proof is similar to the proof of Lemma~\ref{lem:perronshifted}.
\begin{lem}\label{lem:contourshiftomega}
Let $\chi$ be a nonprincipal character. Assume GRH holds for $\chi$ and $\chi^2$. Let $c \in (0,1/6)$ be a fixed constant. Let $T\gg 1$. We have
		\begin{multline}\label{eq:perronshiftedomega} \sum_{n \le x} \chi(n)\omega(n) = \sum_{j=1}^{m} \frac{1}{2\pi i}\int_{\mathcal{H}_{1/2+i\gamma_j}} F_{\omega}(s,\chi)x^s \frac{ds}{s} \\+ O\left( \log x + \frac{x\log^2 x}{T}+\frac{(x+\max\{x,T\}^{1/2}+x^{\frac{1}{2}-c}T^{c+1})\exp\left( \frac{A \log T}{\log \log T}\right)}{T}\right)
		\end{multline}
	where the list $\{1/2+i\gamma_j\}_{j=1}^{m}$ consists of the distinct nontrivial zeros of $L(s,\chi)$ with $-T'' \le t \le T'$ where $T',T''$ depend only on $T$ and $\chi$ are satisfy $T',T''=T+O(1)$. If $\chi^2$ is principal we include $1/2$ in the list. 	
	
	Here $\mathcal{H}_{\rho}$ is the truncated Hankel loop contour defined in \eqref{eq:hankel}, and it has radius $r$ which is chosen to be sufficiently small (in terms of $T$, $x$ and the list of $\gamma_j$s). The implied constant and $A$ depend only on $\chi$ and $c$.
	\end{lem}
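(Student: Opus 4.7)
The plan is to imitate the proof of Lemma~\ref{lem:perronshifted}, replacing $F(s,\chi)$ by $F_{\omega}(s,\chi)$ throughout. First I start from the truncated Perron formula \eqref{eq:perronomega}. On the line $\Re s = 1 + 1/\log x$ one has $|F_{\omega}(s,\chi)| \ll (\log x)^{2}$ (the extra $\log$ coming from $\omega(n) \ll \log n$), so perturbing the truncation $T$ by $O(1)$ incurs an error $O(x(\log x)^{2}/T)$, which is absorbed in the error term of \eqref{eq:perronshiftedomega}. This freedom lets me choose $T' , T'' \in [T-1, T]$ so that the horizontal lines $\Im s \in \{T',-T''\}$ avoid all non-trivial zeros of $L(s,\chi)$.

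Next, using the analyticity region \eqref{eq:fomega analytic} (resp.\ \eqref{eq:fomega analytic2} when $\chi^{2} = \chi_{0}$), I apply Cauchy's integral theorem to deform the vertical segment $[1+1/\log x - iT'',\, 1+1/\log x + iT']$ into the contour of \S\ref{sec:contour}: the horizontal segments $I_{1}, I_{2}$ at heights $-T''$ and $T'$, the vertical segments $J_{j}$ on $\Re s = 1/2 - c$, and the truncated Hankel loops $\mathcal{H}_{1/2+i\gamma_{j}}$ around each non-trivial zero of $L(s,\chi)$ in the critical strip up to height $T$ (plus a loop around $s=1/2$ when $\chi^{2}$ is principal, matching the logarithmic singularity of $-\tfrac12 \log L(2s,\chi^{2})$ there). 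The loops are well-defined because, by \eqref{eq:omega gen}, near a simple zero $\rho$ of $L(s,\chi)$ the factor $L(s,\chi)\log L(s,\chi)$ vanishes to first order times a logarithm, so the integrand is locally integrable.

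The core estimation step is to bound $\int_{I_{1}\cup I_{2}} |F_{\omega}(s,\chi)\, x^{s}/s|\,|ds|$ and $\sum_{j} \int_{J_{j}} |F_{\omega}(s,\chi)\, x^{s}/s|\,|ds|$ in exactly the manner of Lemma~\ref{lem:perronshifted}. Write $F_{\omega}(s,\chi) = L(s,\chi)\bigl(\log L(s,\chi) - \tfrac12 \log L(2s,\chi^{2}) + G_{\omega}(s,\chi)\bigr)$. For $\sigma \ge 1/2$, Lemmas~\ref{lem:upper}, \ref{lem:lower} and \ref{lem:log} (and boundedness of $G_{\omega}$ for $\sigma \ge 1/3 + \varepsilon$) give $|F_{\omega}(s,\chi)| \ll \exp\!\bigl(A \log T / \log\log T\bigr)$, swallowing the $\log L$ factors; for $\sigma \in [1/2 - c, 1/2]$, apply the functional equation Lemma~\ref{lem:fe} to convert $|L(s,\chi)|$ into $|L(1-s,\overline{\chi})|(|t|+4)^{1/2-\sigma}$ with $\Re(1-s) \ge 1/2$, and use Lemma~\ref{lem:log} on $\log L(1-s,\overline{\chi})$ (the constant $c < 1/6$ is required so that $\log L(2s,\chi^{2})$ is still handled via Lemma~\ref{lem:log} in the strip $\Re(2s) > 2/3$). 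Integrating over $\sigma$ yields the bounds $\int_{I_{1}\cup I_{2}} \ll (x + \max\{x,T\}^{1/2})\exp(A\log T/\log\log T)/T$ and $\sum_{j} \int_{J_{j}} \ll x^{1/2 - c} T^{c}\exp(A\log T/\log\log T)$, exactly as in Lemma~\ref{lem:perronshifted}. Combining with \eqref{eq:perronomega} and the modified-$T$ error yields \eqref{eq:perronshiftedomega}.

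The only non-routine point, and the one I would verify most carefully, is the uniform bound on $|\log L(s,\chi)|$ near (but at distance $\gg 1/\log T$ from) the zeros: Lemma~\ref{lem:log} is only stated for fixed $\sigma \in (1/2, 1)$, so on the segments $J_{j}$ at $\sigma = 1/2 - c$ one really uses it applied to $\overline{\chi}$ at $1-s$, and on the horizontal segments $I_{1}, I_{2}$, which lie above/below all zeros at distance $\gg 1$ after the $T \to T', T''$ adjustment, one obtains the $\log L$ bound from Lemmas~\ref{lem:upper}, \ref{lem:lower} via the Borel--Carathéodory inequality (or directly as in the classical Titchmarsh argument). All log factors are then absorbed into $\exp(A\log T / \log\log T)$, which is the whole point of writing the error in that form. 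The treatment of $F_{\Omega}$ is identical, using the analogue of \eqref{eq:omega gen} with $+\tfrac12 \log L(2s,\chi^{2})$ instead of $-\tfrac12\log L(2s,\chi^{2})$.
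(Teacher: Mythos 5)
Your overall strategy --- rerun the proof of Lemma~\ref{lem:perronshifted} with $F_{\omega}$ in place of $F$ and absorb the extra $\log L(s,\chi)$ factor into $\exp(A\log T/\log\log T)$ --- is exactly the paper's, and you correctly single out the control of $|\log L(s,\chi)|$ near the critical line as the one non-routine point. But your resolution of that point has a genuine gap on the horizontal segments $I_1,I_2$. You assert that $T',T''$ can be chosen so that these segments ``lie above/below all zeros at distance $\gg 1$''; this is impossible, since $L(s,\chi)$ has $\asymp\log T$ zeros with ordinates in any unit interval near height $T$, so the best achievable separation is $\gg 1/\log T$. Your earlier, unquantified requirement that the lines merely ``avoid'' the zeros yields no upper bound on $|\log L|$ at all (equivalently, no lower bound on $|L|$), and the lemmas you invoke do not fill the hole: Lemma~\ref{lem:lower} requires $\sigma\ge 1/2+1/\log\log(|t|+4)$ and Lemma~\ref{lem:log} requires a fixed $\sigma>1/2$, so neither controls $\log L(\sigma+iT',\chi)$ for $\sigma$ in the window $[1/2,\,1/2+1/\log\log T]$, which is precisely where $|L|$ can be abnormally small.

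The paper closes this by choosing $T',T''=T+O(1)$ via \cite[Thm.~13.22]{Montgomery2007}, which under GRH supplies heights at which $\log|L(\sigma+it,\chi)|\ge -C\log(|t|+4)$ uniformly for $\sigma\in[1/6,2]$, and bounds $\arg L(s,\chi)=O(\log(|t|+4))$ by \cite[Lem.~12.8]{Montgomery2007}; on the vertical segments it uses the unconditional expansion $\log L(s,\chi)=\sum_{\rho:\,|\gamma-t|\le1}\log(s-\rho)+O(\log(|t|+4))$, which at $\sigma=1/2-c$ gives $O_c(\log|t|)$ since every term there is $O_c(1)$. (Your treatment of the vertical segments via the functional equation and Lemma~\ref{lem:log} at $1-s$ does handle $\log|L|$ correctly, though you should also say how you bound $\arg L$.) A repair in your spirit is possible --- by pigeonhole pick $T'$ at distance $\gg 1/\log T$ from all zero ordinates and apply the local expansion of $\log L$ to get $\ll\log T\log\log T$ on the horizontal segments, which is still absorbed into $\exp(A\log T/\log\log T)$ --- but as written the step fails.
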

\begin{proof}
The proof is similar to that of  Lemma~\ref{lem:perronshifted}, the main difference being the appearance of the factor $\log L(s,\chi)$ because of $\eqref{eq:omega gen}$. We need to be careful because $\log L(s,\chi)$ may be large even if $L(s,\chi)$ is small. We need to explain why the contribution of $\log L(s,\chi)$ may be absorbed into $\exp(A \log T/\log \log T)$. We shall show that $\log L(s,\chi) = O(\log T)$ holds on the relevant contour. Recall that $\arg L(s,\chi)$ is defined via $\log L(s,\chi) = \log |L(s,\chi)| + i\arg L(s,\chi)$. We have $\arg L(s,\chi)=O(\log(|t|+4))$ uniformly in $t$ and $\sigma \in [1/6,2]$, see \cite[Lem.~12.8]{Montgomery2007}. Hence our focus will be on bounding $\log |L(s,\chi)|$. By Lemmas~\ref{lem:upper} and \ref{lem:fe} we have $\log |L(\sigma+it,\chi)| \le C\log(|t|+4)$ for $|t| \gg 1$ and $\sigma \in [1/6,2]$, so that we have an easy upper bound on $\log |L(s,\chi)|$, and the focus is truly on lower bounding $\log |L(s,\chi)|$.

We want to shift the contour in \eqref{eq:perronomega} to $\Re s = 1/2 - c$ and avoid logarithmic singularities using Hankel loops. Before we do so, we replace the endpoints of the integral, namely $1+1/\log x \pm iT$, with $1+1/\log x + iT'$ and $1+1/\log x - iT''$, where $T',T'' = T+O(1)$ and the bound $\log |L(s,\chi)| \ge -C\log (|t|+4))$ holds uniformly on $\Im s = T'$ and $\Im s = -T''$ with $\sigma \in [1/6,2]$. Changing the endpoints does not affect the error term in \eqref{eq:perronomega} due to a simple variation on Remark~\ref{rem:modif}. The existence of such $T'$ and $T''$ is exactly the content of \cite[Thm.~13.22]{Montgomery2007}.

Lemmas~\ref{lem:upper}-\ref{lem:fe} allow us to bound both the vertical and horizontal contributions of $L(s,\chi)$ and $\log L(2s,\chi^2)$. The horizontal contribution of $\log L(s,\chi)$ is small due to the choice of $T'$ and $T''$. 
To bound the vertical contribution of $\log L(s,\chi)$ we use \cite[Ex.~1 at \S12.1.1]{Montgomery2007} which says that for $\Re s \ge 1/6$,  $\log L(s,\chi)=\sum_{\rho:\, |\gamma-t|\le 1}\log(s-\rho)+O(\log (|t|+4))$ unconditionally. Applying this with $\rho=1/2-c+it$ this is $ O_c(\log |t|)$ since all the zeros satisfying $|\gamma-t|\le 1$ are nontrivial and lie on $\Re s= 1/2$, and there are $\ll \log|t|$ zeros between height $t-1$ and $t+1$.
\end{proof}
The following lemma is implicit in \cite[pp.~110--111]{Meng2020}.
\begin{lem}\label{lem:mrho}
	Let $\chi$ be a nonprincipal character and suppose GRH holds for $\chi$ and $\chi^2$. 
	Let $\rho=1/2+i\gamma \neq 1/2$ be a nontrivial zero of $L(s,\chi)$. Let $m_{\rho,\chi}$ be the multiplicity of $\rho$ in $L(s,\chi)$. We have
	\[ \int_{\mathcal{H}_{\rho}} F_{\omega}(s,\chi)x^s \frac{ds}{s} = m_{\rho,\chi}\int_{\mathcal{H}_{\rho}} L(s,\chi) \log (s-\rho)x^s \frac{ds}{s}.\]
\end{lem}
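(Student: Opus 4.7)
The plan is to substitute \eqref{eq:omega gen} into the integrand and peel off the singular contribution of $\log L(s,\chi)$ at $\rho$. Writing
\[ F_{\omega}(s,\chi) = m_{\rho,\chi} L(s,\chi)\log(s-\rho) + L(s,\chi)\Phi(s,\chi), \]
where
\[ \Phi(s,\chi) := \bigl(\log L(s,\chi) - m_{\rho,\chi}\log(s-\rho)\bigr) - \tfrac{1}{2}\log L(2s,\chi^2) + G_{\omega}(s,\chi), \]
the lemma reduces to proving that $\int_{\mathcal{H}_{\rho}} L(s,\chi)\Phi(s,\chi)\,x^s\,ds/s = 0$. I would obtain this by showing that $L(s,\chi)\Phi(s,\chi)\,x^s/s$ extends to a single-valued holomorphic function in a simply connected neighborhood of $\mathcal{H}_{\rho}$, so that the standard Hankel argument annihilates the integral: the two horizontal segments carry equal values (the would-be branch cut is removable) but opposite orientation, while the small anticlockwise circle around $\rho$ contributes zero by Cauchy.

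The crucial input is the cancellation of branches at $\rho$. Factor $L(s,\chi) = (s-\rho)^{m_{\rho,\chi}} h(s)$ with $h$ holomorphic and non-vanishing near $\rho$. For branches chosen compatibly, $\log L(s,\chi) - m_{\rho,\chi}\log(s-\rho) = \log h(s)$ in a disk around $\rho$, and encircling $\rho$ anticlockwise increases both $\log L(s,\chi)$ and $m_{\rho,\chi}\log(s-\rho)$ by $2\pi i\, m_{\rho,\chi}$. Hence the difference is single-valued and holomorphic along the entire Hankel contour. For the other ingredients of $\Phi$: GRH for $\chi$ places all other zeros of $L(s,\chi)$ on $\Re s = 1/2$ at heights different from $\Im\rho$, so for $r$ small enough the loop around $\rho$ meets no other zero; GRH for $\chi^2$ places any zero of $L(2s,\chi^2)$ at $\Re s = 1/4$, while $\mathcal{H}_{\rho}$ satisfies $\Re s \ge 1/2-c>1/4$ (using $c<1/6$), and the only possible pole at $s=1/2$ (when $\chi^2=\chi_0$) is avoided because $\Im\rho\neq 0$; $G_{\omega}(s,\chi)$ is analytic in $\Re s>1/3$, which covers the contour; and $x^s/s$ is analytic away from $s=0$, well outside the contour. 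Combining these, $L(s,\chi)\Phi(s,\chi)\,x^s/s$ is holomorphic in a simply connected neighborhood of $\mathcal{H}_{\rho}$.

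The main subtlety, and the step I would argue most carefully, is the global bookkeeping of branches: I must verify that compatible single-valued branches of $\log L(s,\chi)$ and $\log(s-\rho)$ can be chosen along all of $\mathcal{H}_{\rho}$ so that the difference $\log L(s,\chi) - m_{\rho,\chi}\log(s-\rho)$ is single-valued across the would-be branch cut shared by the two horizontal segments. This rests on the fact that, for $r$ small, the only winding of $\log L$ along the contour comes from the loop around $\rho$, and this winding agrees with that of $m_{\rho,\chi}\log(s-\rho)$ by construction. Granted this, the two horizontal segments cancel and the small circle contributes zero, yielding
\[ \int_{\mathcal{H}_{\rho}} F_{\omega}(s,\chi)\,x^s\,\frac{ds}{s} = m_{\rho,\chi} \int_{\mathcal{H}_{\rho}} L(s,\chi)\log(s-\rho)\,x^s\,\frac{ds}{s}, \]
as claimed.
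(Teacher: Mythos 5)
Your proof is correct and follows essentially the same route as the paper: both decompose $F_\omega(s,\chi)$ by extracting $m_{\rho,\chi}L(s,\chi)\log(s-\rho)$ and observe that the remainder (your $L\Phi$, the paper's $L\cdot H_\rho$ plus the $\log L(2s,\chi^2)$ and $G_\omega$ terms) is single-valued and holomorphic on a neighborhood of $\mathcal{H}_\rho$, so its Hankel integral vanishes by Cauchy's theorem. Your extra care with the branch bookkeeping is a welcome elaboration of the paper's remark that $L(s,\chi)/(s-\rho)^{m_{\rho,\chi}}$ has a removable singularity, but it is not a different argument.
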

\begin{proof}
Since 
	\[ F_{\omega}(s,\chi) = L(s,\chi) \log L(s,\chi) - \frac{1}{2}L(s,\chi)\log L(2s,\chi^2) +  L(s,\chi)G_{\omega}(s,\chi)\]
	and $L(s,\chi)\log L(2s,\chi^2)$, $L(s,\chi)G_{\omega}(s,\chi)$ are analytic in an open set containing $\mathcal{H}_{\rho}$, it follows that
	\[ \int_{\mathcal{H}_{\rho}} F_{\omega}(s,\chi)x^s \frac{ds}{s} = \int_{\mathcal{H}_{\rho}}L(s,\chi)\log L(s,\chi)x^s \frac{ds}{s}\]
	by Cauchy's Integral Theorem. We may write $\log L(s,\chi)$ as
	\[ \log L(s,\chi) = m_{\rho,\chi} \log (s-\rho) + H_{\rho}(s,\chi)\]
	for a function $H_{\rho}$\footnote{An estimate for $H_{\rho}(s,\chi)$ on $\mathcal{H}_{\rho}$ may be obtained, see \cite[Eq.~(2.15)]{Meng2020}.} which is analytic in an open set containing the loop, since $L(s,\chi)/(s-\rho)^{m_{\rho,\chi}}$ has a removable singularity at $s=\rho$. By Cauchy's Integral Theorem, $H_{\rho}(s,\chi)$ does not contribute to the Hankel contour integral, giving the conclusion.
\end{proof}
Lemmas~\ref{lem:contourshiftomega} and \ref{lem:mrho} hold as stated for $\Omega$ in place of $\omega$ as well.
We have the following lemma, a `logarithmic' analogue of  Lemma~\ref{lem:hankelabsolute}.
\begin{lem}\label{lem:omegaabsolute}
Let $\chi$ be a nonprincipal character and suppose GRH holds for $\chi$. 
	Let $\rho=1/2+i\gamma$ be a nontrivial zero of $L(s,\chi)$. Let 
\[ I_{\rho}:= \int_{H_{\rho}} |L(s,\chi)| |\log(s-\rho)| x^{\Re s} |ds|.\]
Then
\[ I_{\rho} \ll \sqrt{x}(|\gamma|+1)^{c+o(1)} \left( \frac{\log \log x}{\log^2 x}+\log (r^{-1}) r^2 x^r \right).\]
\end{lem}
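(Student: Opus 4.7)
The plan is to follow the same contour-decomposition strategy used in Lemma~\ref{lem:hankelabsolute}, but track the extra factor $|\log(s-\rho)|$. The Hankel contour $\mathcal{H}_\rho$ splits into the circle $\{|s-\rho|=r\}$ and two horizontal segments $\{\Re s \in [1/2-c, 1/2-r],\, \Im s = \gamma,\, \arg(s-\rho)=\pm\pi\}$. On both pieces, the key pointwise bound is $|L(s,\chi)| \ll |s-\rho|\,(|\gamma|+1)^{c+o(1)}$, which comes directly from Lemma~\ref{lem:ratio}. For the logarithm factor, on the circle we have $\log(s-\rho)=\log r + i\theta$ so $|\log(s-\rho)| \ll \log(r^{-1})$ (for $r$ small enough), and on the horizontal segments with $u := 1/2 - \Re s \in [r,c]$ we have $|\log(s-\rho)| \le |\log u| + \pi \ll 1 + \log(1/u)$.

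First I would handle the circular part: using $|ds|=r\,d\theta$, $x^{\Re s} \le x^{1/2+r}$, and combining the three bounds yields
\[
\int_{|s-\rho|=r} |L(s,\chi)||\log(s-\rho)|x^{\Re s}|ds| \ll (|\gamma|+1)^{c+o(1)}\, r \cdot r \cdot \log(r^{-1}) \cdot x^{1/2+r},
\]
giving $\ll \sqrt{x}\,(|\gamma|+1)^{c+o(1)}\, \log(r^{-1})\, r^2\, x^r$, which matches the second term in the stated bound.

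Next I would handle each horizontal segment. Parametrizing by $u \in [r,c]$ gives
\[
\int_r^c |L(s,\chi)||\log(s-\rho)| x^{1/2-u}\,du \ll (|\gamma|+1)^{c+o(1)}\, \sqrt{x}\int_r^c u\bigl(1+\log(1/u)\bigr)x^{-u}\,du.
\]
Extending the integral to $(0,\infty)$ and substituting $v=u\log x$ converts this into
\[
\frac{1}{(\log x)^2}\int_0^\infty v\bigl(1 + \log\log x - \log v\bigr) e^{-v}\,dv \ll \frac{\log\log x}{(\log x)^2},
\]
since $\int_0^\infty v|\log v|e^{-v}dv$ and $\int_0^\infty v e^{-v}dv$ are both absolutely convergent constants. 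This matches the first term in the stated bound.

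Adding the two contributions gives the claim. There is no serious obstacle: the only subtlety is producing a factor of $|s-\rho|$ from $L(s,\chi)$ via Lemma~\ref{lem:ratio} so that on the horizontal segments the integrand $u(1+\log(1/u))x^{-u}$ is integrable near $u=0$ with the correct $(\log\log x)/(\log x)^2$ saving, and on the circle the $|s-\rho|$ factor converts one of the naive $r \log(r^{-1})$ contributions into $r^2 \log(r^{-1})$.
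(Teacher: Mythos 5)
Your proposal is correct and follows essentially the same route as the paper: the paper's (two-line) proof likewise writes $L(s,\chi)=\frac{L(s,\chi)}{s-\rho}\cdot(s-\rho)$, invokes Lemma~\ref{lem:ratio} to bound the quotient by $(|\gamma|+1)^{c+o(1)}$, and then treats the circle $|s-\rho|=r$ and the horizontal segments separately. You have simply supplied the routine computations (the $r^2\log(r^{-1})x^r$ count on the circle and the $v=u\log x$ substitution yielding $\log\log x/(\log x)^2$ on the segments) that the paper leaves implicit, and these are carried out correctly.
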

\begin{proof}
We write $L(s,\chi)$ as $L(s,\chi)/(s-\rho)$ times $(s-\rho)$, and use Lemma~\ref{lem:ratio} to bound $L(s,\chi)/(s-\rho)$ by $(|\gamma|+1)^{c+o(1)}$. We now consider separately $|s-\rho|=r$ and $s=\rho+t$, $-c\le t\le -r$.
\end{proof}

The following is an $\omega$-analogue of Lemmas~\ref{lem:hankelhalf} and \ref{lem:asymphalf}.
\begin{lem}\label{lem:hankelhalf2}
	Let $\chi$ be a nonprincipal Dirichlet character. Assume GRH holds for $\chi$ and $\chi^2$.
	\begin{enumerate}
		\item If $L(1/2,\chi) = 0$ then
			\[ \int_{\mathcal{H}_{1/2}} F_{\omega}(s,\chi)\frac{x^s}{s} ds \ll 	\sqrt{x} \left( \frac{\log \log x}{\log^2 x}+\log (r^{-1}) r^2 x^r \right).\]
		\item If $\chi^2$ is principal and $L(1/2,\chi)\neq 0$ then
\begin{equation}\label{eq:omegasymp} \frac{1}{2\pi i}\int_{\mathcal{H}_{1/2}} F_{\omega}(s,\chi)\frac{x^s}{s} ds = -L\left(\frac{1}{2},\chi\right) \frac{\sqrt{x}}{\log x} + O\left(\frac{\sqrt{x}}{(\log x)^2}\right).
\end{equation}
\end{enumerate}
The implied constants depend only on $\chi$.
\end{lem}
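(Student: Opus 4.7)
The plan is to isolate the multi-valued part of $F_\omega(s,\chi)$ near $s=1/2$ using~\eqref{eq:omega gen}, in the spirit of Lemmas~\ref{lem:mrho} and~\ref{lem:omegaabsolute}. Let $m = m_{1/2,\chi}$ be the multiplicity of $s=1/2$ as a zero of $L(s,\chi)$, and write $L(s,\chi) = (s-1/2)^m h(s)$ with $h$ analytic and non-vanishing at $s=1/2$, so that $\log L(s,\chi) = m\log(s-1/2) + \log h(s)$. The function $\log L(2s,\chi^2)$ is analytic at $s=1/2$ when $\chi^2 \neq \chi_0$; when $\chi^2 = \chi_0$ the simple pole of $L(2s,\chi_0)$ at $s=1/2$ forces $\log L(2s,\chi_0) = -\log(s-1/2) + (\text{analytic})$. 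Since $G_\omega(s,\chi)$ is analytic at $s=1/2$, we obtain the decomposition
\[
F_\omega(s,\chi) = \alpha\, L(s,\chi)\log(s-1/2) + L(s,\chi)\Psi(s),
\]
where $\Psi$ is analytic near $s=1/2$ and $\alpha = m + \tfrac{1}{2}\mathbf{1}_{\chi^2=\chi_0}$. In Part (1) we have $\alpha \ge 1$, while in Part (2) we have $m=0$ and $\chi^2=\chi_0$, hence $\alpha = \tfrac{1}{2}$. In either case $L(s,\chi)\Psi(s)/s$ is analytic at $s=1/2$, so its Hankel integral around $1/2$ vanishes by Cauchy (the horizontal legs cancel and the small circle picks up no residue).

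For Part (1), I would bound the surviving integral $\int_{\mathcal{H}_{1/2}} L(s,\chi)\log(s-1/2)\,x^s\,ds/s$ exactly as in Lemma~\ref{lem:omegaabsolute}. Lemma~\ref{lem:ratio} at $\rho=1/2$ gives $|L(s,\chi)| \ll |s-1/2|$ on the contour. The circular arc of radius $r$ contributes $\ll r^2\log(r^{-1})\,x^{1/2+r}$, and on each horizontal leg the substitution $u = (1/2-\Re s)\log x$ converts $\int_r^c t|\log t|\,x^{1/2-t}\,dt$ into $\sqrt{x}(\log x)^{-2}\int_0^{c\log x} u\bigl(|\log u|+\log\log x\bigr)e^{-u}\,du \ll \sqrt{x}\log\log x/(\log x)^2$. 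Summing yields the claimed bound.

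For Part (2), I would evaluate the same integral asymptotically via Hankel's representation of $1/\Gamma$. Taylor-expand $L(s,\chi)/s = 2L(1/2,\chi) + A(s-1/2) + O((s-1/2)^2)$ around $s=1/2$ and substitute $z = (s-1/2)\log x$, which sends $\mathcal{H}_{1/2}$ to a Hankel loop $\mathcal{H}_0$ around the origin. The $2L(1/2,\chi)$ term yields
\[
\frac{L(1/2,\chi)}{2\pi i}\cdot\frac{\sqrt{x}}{\log x}\int_{\mathcal{H}_0}(\log z - \log\log x)\,e^z\,dz.
\]
Here $\int_{\mathcal{H}_0} e^z\,dz = 0$ (the integrand is entire), killing the $\log\log x$ piece, while differentiating Hankel's identity $\Gamma(a)^{-1} = (2\pi i)^{-1}\int_{\mathcal{H}_0} z^{-a}e^z\,dz$ at $a=0$ gives $\int_{\mathcal{H}_0}\log z\cdot e^z\,dz = -2\pi i$, producing the main term $-L(1/2,\chi)\sqrt{x}/\log x$. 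The first-order correction $A(s-1/2)$ reduces to $\int_{\mathcal{H}_0} z(\log z - \log\log x)e^z\,dz$; the $\log\log x$ piece again vanishes because $ze^z$ is entire, and Hankel's formula at $a=-1$ gives $\int_{\mathcal{H}_0} z\log z\,e^z\,dz = 2\pi i$, producing a correction of exact order $\sqrt{x}/(\log x)^2$ with no $\log\log x$ factor. The $O((s-1/2)^2)$ remainder is bounded by a Part (1)-type estimate and contributes $O(\sqrt{x}\log\log x/(\log x)^3)$, which is absorbed into the stated error.

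The delicate point is the cleanness of the Part (2) error: naively bounding the first-order correction via $|L(s,\chi)/s - 2L(1/2,\chi)| \ll |s-1/2|$ would only yield $\sqrt{x}\log\log x/(\log x)^2$, whereas the statement asks for $\sqrt{x}/(\log x)^2$. Sharpening to this order requires the exact Hankel evaluation together with the crucial observation that the Hankel integrals of the entire integrands $e^z$ and $ze^z$ vanish, removing the spurious $\log\log x$. Everything else is a direct adaptation of the arguments developed for $F(s,\chi)$ in Section~4.
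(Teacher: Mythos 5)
Your proposal is correct and follows essentially the route the paper intends: the paper's own proof of this lemma simply defers part (1) to a minor modification of Lemma~\ref{lem:omegaabsolute} and part (2) to Meng's Eq.~(2.28), and your decomposition $F_\omega = \alpha L(s,\chi)\log(s-1/2) + L(s,\chi)\Psi(s)$ together with the bound $|L(s,\chi)|\ll|s-1/2|$ from Lemma~\ref{lem:ratio} (part (1)) and the differentiated Hankel identity (part (2)) is exactly the computation being outsourced. Your observation that the exact evaluation of the first-order Taylor term is needed to avoid a spurious $\log\log x$ in the part (2) error is a genuine and correctly handled detail.
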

\begin{proof}
The first part is a minor modification of the proof of Lemma~\ref{lem:omegaabsolute}. The second part is \cite[Eq.~(2.28)]{Meng2020}.
\end{proof}
Lemma~\ref{lem:hankelhalf2} holds for $\Omega$ in place of $\omega$, with the only difference being a sign change in \eqref{eq:omegasymp}.
\subsection{Proof of Theorem~\ref{thm:omega}}
We shall prove the theorem in the case of $\omega$; the proof for $\Omega$ is analogous.
Suppose $a,b$ satisfy $(a,q)=(b,q)=1$. Suppose the constant $D_{q,a,b}$ appearing in \eqref{eq:lincondomega} is positive.
Consider $X \gg 1$ which will tend to $\infty$. 
By orthogonality of characters we write
\begin{equation}\label{eq:orthoomega}
\sum_{\substack{n \le x\\ n \equiv a \bmod q}}\omega(n) -\sum_{\substack{n \le x\\ n \equiv b \bmod q}}\omega(n) = \frac{1}{\phi(q)} \sum_{\chi_0 \neq \chi \bmod q} \overline{(\chi(a)-\chi(b))} \sum_{n \le x}\chi(n)\omega(n)
\end{equation}
for each $x \in [X,2X]$.
By \eqref{eq:perronshiftedomega} with $T = X^{3/4} \asymp x^{3/4}$ and $c=1/10$ we have, uniformly for $x \in [X,2X]$,
\[\sum_{n \le x} \chi(n)\omega(n) = \sum_{j=1}^{m} \frac{1}{2\pi i}\int_{\mathcal{H}_{1/2+i\gamma_j}} F_{\omega}(s,\chi)x^s \frac{ds}{s}+ O\left( x^{\frac{1}{2}-\frac{1}{100}}\right)
\]
for any nonprincipal $\chi$. 
For any pair $\rho_1=1/2+i\gamma_1$, $\rho_2=1/2+i\gamma_2$ of nontrivial zeros of $L(s,\chi)$ different from $1/2$ we have, from Lemmas~\ref{lem:mrho} and \ref{lem:omegaabsolute},
\begin{multline}
	\frac{1}{X}\int_{X}^{2X}\int_{\mathcal{H}_{\rho_1}}  F_{\omega}(s_1,\chi)x^{s_1} \frac{ds_1}{s_1} \overline{ \int_{\mathcal{H}_{\rho_2}} F_{\omega}(s_2,\chi) x^{s_2} \frac{ds_2}{s_2}} \\ \ll  \frac{m_{\rho_1,\chi}m_{\rho_2,\chi}}{|\gamma_1\gamma_2|^{1-c+o(1)}\left(1+|\gamma_1-\gamma_2|\right)} X\left( \frac{\log \log X}{\log^2 X}+\log r^{-1} r^2 x^r \right)^2
\end{multline}
in analogy with Lemma~\ref{lem:l2bound}. We take $r=o(1/\log X)$. Since $m_{\rho} = O(\log (|\rho|+1))$ \cite[Thm.~10.17]{Montgomery2007} and 
\[\sum_{\substack{\gamma_1,\gamma_2 \neq 0:\\ L(1/2+i\gamma_j,\chi)=0}}\frac{1}{|\gamma_1 \gamma_2|^{1-1/5}(1+|\gamma_1-\gamma_2|)}\]
converges \cite[Thm.~13.5]{Montgomery2007}, it follows that in an $L^2$-sense, the contribution of $\rho\neq 1/2$ to \eqref{eq:orthoomega} is $O(\sqrt{x}\log \log x/(\log x)^2)$; this step corresponds to \eqref{eq:l2s1}. By Lemma~\ref{lem:hankelhalf2}, the contribution of loops around $s=1/2$ is
\[ -\frac{\sqrt{x}}{\log x}\frac{1}{\phi(q)} \sum_{\substack{\chi \bmod q\\ \chi^2 = \chi_0}} \overline{\chi(a)-\chi(b)}  L\left( \frac{1}{2},\chi\right) + O\left( \frac{\sqrt{x} \log \log x}{\log ^2 x}\right) = -\frac{\sqrt{x}}{\log x} \left( \frac{D_{q,a,b}}{\phi(q)}+o(1)\right). \]
As in the proof of Theorem~\ref{thm:bias}, Chebyshev's inequality allows us to conclude the following. The probability that for a number $x$ chosen uniformly at random from $[X,2X]$, $\sum_{n \le x, \, n \equiv a \bmod q} \omega(n) < \sum_{n \le x, \, n \equiv b \bmod q} \omega(n)$ tends to $1$ with $X$. This finishes the proof. \qed

		\section*{Acknowledgements}
We thank James Maynard and Zeev Rudnick for helpful discussions, Dor Elboim for teaching me about the Hankel contour, Dan Carmon and Max Xu for typographical corrections, Tom Bloom for pointing out the works of Kaczorowski and Andrew Granville for useful comments that substantially improved the presentation. We are thankful for the referee's thorough reading of the manuscript and valuable comments.
This project has received funding from the European Research Council (ERC) under the European Union's Horizon 2020 research and innovation programme (grant agreement No 851318).

  	\bibliographystyle{alpha}
	\bibliography{references}
	\Addresses
\end{document}